\DeclareMathOperator*{\esssup}{ess\,sup}
\newtheorem{assumption}{Assumption}
\newtheorem{remark}{Remark}
\title{Systems of ergodic BSDEs arising in regime switching forward performance processes}
\author{Ying Hu\thanks{%
Univ Rennes, CNRS, IRMAR - UMR 6625, F-35000 Rennes, France, and School of Mathematical Sciences, Fudan
University, Shanghai 200433, China. Partially supported by Lebesgue
Center of Mathematics ``Investissements d'avenir"
program-ANR-11-LABX-0020-01, by ANR CAESARS (Grant No. 15-CE05-0024)
and by ANR MFG (Grant No. 16-CE40-0015-01). Email:
\texttt{ying.hu@univ-rennes1.fr} } \and
Gechun Liang\thanks{%
Department of Statistics, University of Warwick, Coventry, CV4 7AL,
U.K. Partially supported by Royal Society International Exchanges
(Grant No. 170137). Email: \texttt{g.liang@warwick.ac.uk} } \and
Shanjian Tang\thanks{%
Department of Finance and Control Sciences, School of Mathematical
Sciences, Fudan University, Shanghai 200433, China. Partially
supported by National Science Foundation of China (Grant No.
11631004) and National Key R\&D Program of China (Grant No. 2018YFA0703903). Email:
\texttt{sjtang@fudan.edu.cn}}}
\begin{document}

\maketitle

\begin{abstract}
We introduce and solve a new type of quadratic backward stochastic
differential equation systems defined in an infinite time horizon,
called \emph{ergodic BSDE systems}. Such systems arise naturally as
candidate solutions to characterize forward performance processes
and their associated optimal trading strategies in a regime
switching market. In addition, we develop a connection between the
solution of the ergodic BSDE system and the long-term growth rate of
classical utility maximization problems, and use the ergodic BSDE
system to study the large time behavior of PDE systems with
quadratic growth Hamiltonians.
\end{abstract}

\begin{keywords}
Infinite horizon BSDE system, ergodic BSDE system, multidimensional
comparison theorem, regime switching, forward performance processes,
large time behavior of PDE systems.
\end{keywords}

\begin{AMS} 60H30, 91G10, 93E20
\end{AMS}

\pagestyle{myheadings} \thispagestyle{plain} \markboth{Ying Hu,
Gechun Liang and Shanjian Tang}{Systems of ergodic BSDEs}


\section{Introduction}

This paper introduces a new class of quadratic backward stochastic
differential equation (BSDE for short) systems in an \emph{infinite
time horizon}, called \emph{ergodic BSDE systems}. The systems arise in  our solution of forward performance processes for
portfolio optimization problems in a regime switching market. We
show that ergodic BSDE systems are natural candidates for the
characterization of forward performance processes and associated
optimal strategies in a financial market with multiple regimes.

Let us first recall that an infinite horizon BSDE typically takes
the form
\begin{equation}\label{infhorizon_BSDE}
dY_t=-F(t,Y_t,Z_t)dt+(Z_t)^{tr}dW_t, \quad t\geq 0
\end{equation}
where $F$ is called the driver of the equation, and
$W$ is a $d$-dimensional Brownian motion as the driving noise of the
equation. In contrast to the  case of a finite time horizon $[0,T]$, the infinite horizon
BSDE (\ref{infhorizon_BSDE}) is defined over all time horizons and
may be ill posed, even if the driver $F$ is Lipschitz continuous in
both $Y$ and $Z$. It has been solved in \cite{Briand2}
under a strictly monotone condition on the driver, a
typical one of which  reads
$$F(t,Y_t,Z_t)=f(t,Z_t)-\rho Y_t,$$
for some constant $\rho>0$. Then, it has been shown in
\cite{Briand2} that (\ref{infhorizon_BSDE}) admits a unique bounded
solution $(Y,Z)$ adapted to the Brownian filtration, if $f$ is Lipschitz continuous in $Z$. If $f$ has a quadratic growth in $Z$, it has been further treated in~\cite{Briand0}.

 Note that only bounded solutions are concerned here, for unbounded solutions to BSDE~(\ref{infhorizon_BSDE}) are not unique in general. The restriction within bounded solutions is also useful in the study of the Markovian  BSDE~ (\ref{infhorizon_BSDE}) and its asymptotic property. Indeed, in a Markovian framework where $f(t,Z_t)=f(V_t,Z_t)$ with $V$ being the
underlying forward diffusion process, the solution $(Y,Z)$ admits a Markovian representation $(Y_t,Z_t)=(y(V_t),z(V_t))$ for some pair of  measurable functions $y(\cdot)$ and $z(\cdot)$, and has been shown in \cite{HU2} and
later in \cite{HU1} that, when $\rho\rightarrow 0$,
the bounded Markovian solution to BSDE (\ref{infhorizon_BSDE}) converges to the Markovian solution of the \emph{ergodic BSDE}
\begin{equation}\label{ergodic_BSDE}
dY_t=-(f(V_t,Z_t)-\lambda)dt+(Z_t)^{tr}dW_t,\quad t\geq 0.
\end{equation}
Here, the constant $\lambda$ constitutes one part of the
solution to (\ref{ergodic_BSDE}), and has a stochastic control
interpretation as the value of an ergodic control problem. The
ergodic BSDE (\ref{ergodic_BSDE}) has been widely used to study
the large time behavior of solutions of their finite horizon counterparts (see,
for example, \cite{Hu11} and \cite{Pham}).

Both ergodic BSDE~(\ref{ergodic_BSDE}) and infinite horizon
BSDE~(\ref{infhorizon_BSDE}) turn out to be natural
candidates for the characterization of forward performance processes
and their associated optimal portfolio strategies in portfolio
optimization problems. Forward performance processes were introduced
and developed in \cite{MZ0,MZ-Kurtz,MZ1,MZ2}. They complement the
classical expected utility paradigm in which the utility is a
deterministic function chosen at a single terminal time. The value
function process is, in turn, constructed backwards in time, as the
dynamic programming principle yields. As a result, there is limited
flexibility to incorporate updating of risk preferences, rolling
horizons, learning, and other realistic ``forward in nature"
features if one requires that time-consistency is being preserved at
all times. Forward performance processes alleviate some of these
shortcomings and offer the construction of a genuinely dynamic
mechanism for evaluating the performance of investment strategies as
the market evolves across (arbitrary) trading horizons. See also
\cite{Henderson2, Jan, Nadtochiy-Tehranchi, Shkolnikov-Sircar-Z, ZZ, Zit} for their
developments and various applications.

The construction of \emph{(Markovian) forward performance processes} is, however,
difficult, due to the ill-posed nature and degeneracy of the
corresponding (stochastic) partial differential equations (see
\cite{ElKaroui}). This difficulty has been recently overcome in
\cite{LZ}, which shows that Markovian forward performance processes in
homothetic form can be effectively constructed via the Markovian solutions of
the equations like (\ref{infhorizon_BSDE}) and (\ref{ergodic_BSDE}).
It bypasses a number of aforementioned difficulties inherited in the
associated SPDE. See also \cite{CHLZ} for a further development of
this method to study forward entropic risk measures.

Our aim herein is to generalize both (\ref{infhorizon_BSDE}) and
(\ref{ergodic_BSDE}) from scalar-valued to vector-valued equations,
i.e. systems of equations. The corresponding BSDE systems are
motivated by the construction of \emph{Markovian forward performance processes in a
regime switching market}. Due to the interactions of different market
regimes through a given Markov chain, the corresponding infinite
horizon BSDE system for a Markovian forward performance process is expected to
take the form
\begin{equation}\label{infhorizon_BSDE_system_0}
dY_t^i=-f^i(V_t,Z_t^i)dt-\sum_{k\in
I}q^{ik}(e^{Y_t^k-Y_t^{i}}-1)dt+(Z_t^i)^{tr}dW_t,\vspace{-0.2cm}
\end{equation}
for $t\geq 0$ and $i\in I:=\{1,2,\dots,m^0\}$, where $q^{ik}$ is the
transition rate from market regime $i$ to $k$. The second term on
the right hand side of (\ref{infhorizon_BSDE_system_0}) couples all
the equations together and represents the interaction of different
market regimes. A similar feature has also appeared in \cite{Bec0}
and \cite{Bec}, where the authors studied classical utility
maximization in a regime switching framework and derived a finite
horizon BSDE system.

However, different from the finite horizon case, the infinite
horizon BSDE system (\ref{infhorizon_BSDE_system_0}) is ill posed.
Indeed, in a single regime case, (\ref{infhorizon_BSDE_system_0})
then reduces to a scalar-valued BSDE, and the strictly monotone
condition fails to hold. To overcome this difficulty, we modify
(\ref{infhorizon_BSDE_system_0}) by adding a discount term $\rho
Y_t^i$ in the driver (see (\ref{infhorizon_BSDE_system}) in section
2), which serves the role of strict monotonicity. Although this
additional discount term makes the modified BSDE system well posed,
it however distorts the original problem. As a result, the solution
of the modified BSDE system will no longer correspond to a forward
performance process.

As a first contribution, we construct Markovian regime switching forward
performance processes in homothetic form via the asymptotic limit of
the infinite horizon BSDE system (\ref{infhorizon_BSDE_system}),
that is, the ergodic BSDE system (\ref{ergodic_BSDE_system}) (see
Theorem \ref{theorem_ergodic_representation}). Both BSDE systems
(\ref{infhorizon_BSDE_system}) and (\ref{ergodic_BSDE_system}) are
new\footnote{Recently, \cite{Cohen} also introduced an ergodic BSDE
system motivated from non-zero sum games. However, the structure of
their system is different from ours. In particular, there is no
comparison theorem for their system.}. They are introduced for the
first time for the characterization of regime switching forward
performance processes. In particular, we show that when there is a
single regime, our representation of forward performance processes
will recover the ergodic BSDE representation appearing in \cite{LZ}.

Our second contribution is about solvability of the infinite horizon
BSDE system (\ref{infhorizon_BSDE_system}). Since the driver $f^i$
has quadratic growth in $Z^i$, the standard Lipschitz estimates do
not apply to our system. Instead, we first apply a truncation
technique and derive \emph{a priori} estimates for the solutions,
and subsequently show that the truncation constants coincide with
the constants appearing in the \emph{a priori} estimates. For this,
we make an extensive use of the multidimensional comparison theorem
for BSDE systems, which was firstly developed in \cite{HuPeng}. An
essential idea herein is to use the bounded solution of an auxiliary
ODE (not system!) as a universal bound to control all the solution
components of the BSDE system.

We then derive the ergodic BSDE system (\ref{ergodic_BSDE_system})
as the asymptotic limit of the infinite horizon BSDE system
(\ref{infhorizon_BSDE_system}). This ergodic BSDE system, on one
hand, characterizes the regime switching forward performance
processes and, on the other hand, is also a natural extension of the
ergodic equation introduced in \cite{HU2}. Herein, a new feature is
that all the equation components have a common ergodic constant
$\lambda$ as a part of the solution. Similar to \cite{HU2}, we apply
the perturbation technique to construct a sequence of approximate
solutions to the ergodic BSDE system. However, the commonly used
Girsanov's transformation method does not imply the uniqueness of
the solution due to different probability measures induced by each
equation component. Instead, we prove the uniqueness of the solution
by first converting the ergodic BSDE system
(\ref{ergodic_BSDE_system}) to a scalar-valued ergodic BSDE driven
by the Brownian motion and an exogenously given Markov chain and
then using the Girsanov's transformation under the Brownian motion
and the Markov chain (see Appendix \ref{Appendix_B}).

Our third contribution is about a stochastic control representation
for the ergodic constant $\lambda$ (see Proposition
\ref{propositionLambda}). We show that it corresponds to the
long-term growth rate of a risk-sensitive optimization problem in a
regime switching framework. This, in turn, connects with the
long-term growth rate of a regime switching utility maximization
problem. Thus, our result also unveils an intrinsic connection
between forward performance processes and classical expected
utilities in a market with multiple regimes.

Our last contribution is using the ergodic BSDE system
(\ref{ergodic_BSDE_system}) to study the large time behavior of solutions to a
class of PDE systems with quadratic growth Hamiltonians (see Theorem
\ref{theorem_large_time_behavior}). Those PDE systems are often used
to characterize the utility indifference prices of financial
derivatives in a regime switching market (see \cite{Bec0} and
\cite{Bec}). We show that the solution of the PDE system will
converge to the solution of the ergodic BSDE system exponentially
fast. To the best of our knowledge,  this is the first convergence
rate result for the large time behavior of PDE systems.

Turning to literature about the quadratic BSDE (systems), most of
the existing results are only for a finite time horizon. The scalar
equation with bounded terminal data was first solved in
\cite{Kobylanski} and was applied to solve utility maximization
problems in \cite{him}. See also \cite{Briand1, Morlais, Tevzadze} for extensions. The case with unbounded terminal
data is more challenging and was solved in \cite{BH1, BH2, Delbaen0}, with \cite{dhr} and \cite{dhr2} further showing the
uniqueness of the solution. Their applications can be found in
\cite{Barrieu} and \cite{HLT}. Recently, there have been a renewed
interest in the corresponding quadratic BSDE systems due to their
various applications in equilibrium problems, price impact models
and non-zero sum games (see, for example, \cite{CN, HuTang, JKL, KP, KP2, XZ} with more references
therein). In spite of all the aforementioned results, our paper
seems to be the first to introduce and solve quadratic BSDE systems
in an infinite time horizon.\\

The paper is organized as follows. Section 2 introduces an infinite
horizon BSDE system with quadratic growth drivers. Section 3 studies
its asymptotic limit, which leads to an ergodic BSDE system. Section
4 applies the ergodic BSDE system to construct Markovian forward performance
processes in a regime switching market. Section 5 applies the
ergodic BSDE system to study the large time behavior of a PDE
system. Section 6 then concludes. For the reader's convenience, we
also provide a proof of the multidimensional comparison theorem in
the appendix.


\section{System of infinite horizon quadratic BSDE}\label{section:BSDE}

Let $W$ be a $d$-dimensional Brownian motion on a probability space
$(\Omega ,\mathcal{F},\mathbb{P})$. Denote by $\mathbb{F}=(
\mathcal{F}_{t})_{t\geq 0}$ the augmented filtration generated by
$W$. Throughout this paper, we denote by
$A^{tr}$  the transpose of matrix $A$. Consider the
infinite horizon BSDE system: for $t\geq 0$ and $i\in I:=\{1,2,\dots,m^0\}$,
\begin{equation}\label{infhorizon_BSDE_system}
dY_t^i=-f^i(V_t,Z_t^i)dt-\sum_{k\in
I}q^{ik}(e^{Y_t^k-Y_t^{i}}-1)dt+\rho
Y_t^idt+(Z_t^i)^{tr}dW_t. \vspace{-0.2cm}
\end{equation}
By a solution to
(\ref{infhorizon_BSDE_system}), we mean a pair of adapted processes
$(Y^i,Z^i)_{i\in I}$ satisfying (\ref{infhorizon_BSDE_system}) in an
arbitrary time horizon.

To solve (\ref{infhorizon_BSDE_system}), we impose the following
assumptions on $f^i$.

\begin{assumption}\label{assumption_11}
There exist three constants $C_v,C_z$ and $K_f$ such that, for $i,k\in I$
and $v,\bar{v}, z,\bar{z}\in\mathbb{R}^d$,

  (i) $|f^i(v,z)-f^i(\bar{v},z)|\leq C_v(1+|z|)|v-\bar{v}|$;

 (ii) $|f^i(v,z)-f^i(v,\bar{z})|\leq
C_z(1+|z|+|\bar{z}|)|z-\bar{z}|$;

 (iii) $|f^i(v,0)|\leq K_f$.
\end{assumption}

Assumption \ref{assumption_11}(ii) implies that $f^i(v,z)$
has a quadratic growth in $z$. Thus, we are facing
\emph{a system of quadratic BSDEs defined in an infinite time
horizon}. The system is coupled through the coefficients $q^{ik}$,
 $i,k\in I$, which satisfy

\begin{assumption}\label{assumption_12}
The square matrix $\mathcal{Q}:=\{q^{ik}\}_{i,k\in I}$ is a
transition rate matrix satisfying (i) $\sum_{k\in I}q^{ik}=0$; (ii)
$q^{ik}\geq 0$ for $i\neq k$. Let $q^{\max}$ be the maximal
transition rate, i.e. $q^{\max}=\max_{i,k} q^{ik}$.
\end{assumption}


The infinite horizon BSDE system (\ref{infhorizon_BSDE_system}) is
coupled with a forward diffusion process $V$ satisfying

\begin{assumption}\label{assumption_13}
The underlying $d$-dimensional forward diffusion process $V$ is given by the
solution of the  mean-reverting SDE
\begin{equation}
dV_{t}=\eta (V_{t})dt+\kappa dW_{t}\text{,} \label{factor}
\end{equation}
where the drift coefficients $\eta (\cdot)$ satisfy a dissipative
condition, namely, there exists a constant $C_{\eta }>C_v$ such
that, for $v,\bar{v}\in \mathbb{R}^{d}$,
\begin{equation*}
(\eta (v)-\eta (\bar{v}))^{tr}(v-\bar{v})\leq -C_{\eta }|v-\bar{v}|^{2}\text{%
.}
\end{equation*}
Moreover, the volatility matrix $\kappa \in \mathbb{R}^{d\times d}$
is positive definite and normalized to $|\kappa |=1$.
\end{assumption}

The main result of this section is the following existence and uniqueness of
the solution to (\ref{infhorizon_BSDE_system}).

\begin{theorem}\label{theorem_infhorizon_BSDE_system}
Let Assumptions \ref{assumption_11}, \ref{assumption_12}, and \ref{assumption_13} be
satisfied. Then, there exists a unique bounded solution
$(Y^i,Z^i)_{i\in I}$ to the infinite horizon BSDE system
(\ref{infhorizon_BSDE_system}) satisfying
\begin{equation}\label{estimate_yz}
|Y^i_t|\leq K_y:=\frac{K_f}{\rho}\ \ \text{and}\ \ |Z_t^i|\leq
K_z:=\frac{C_v}{C_{\eta}-C_v}.
\end{equation}
\end{theorem}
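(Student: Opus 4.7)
The plan is to combine a double approximation---truncation of the driver in $z$ together with a finite horizon---with the multidimensional comparison theorem of the appendix, and then pass to the limit in both parameters. I proceed in four stages.

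\emph{Stage 1 (truncated finite-horizon system).} For $T>0$ and $N>0$, set $f^{i,N}(v,z):=f^i(v,\pi_N(z))$, where $\pi_N$ is the radial projection onto the closed ball of radius $N$ in $\mathbb{R}^d$. Consider the BSDE system on $[0,T]$ with terminal condition $Y^i_T=0$ and driver $f^{i,N}$, retaining the exponential coupling. Since $f^{i,N}$ is globally Lipschitz in $z$ and the coupling is Lipschitz on bounded sets of $Y$, Picard iteration gives a local Markovian solution, which extends to all of $[0,T]$ once the uniform $L^\infty$-bound of Stage~2 is in hand; the solution has the form $(Y^{i,T,N},Z^{i,T,N})=(u^{i,T,N}(\cdot,V_\cdot),\kappa^{tr}\nabla u^{i,T,N}(\cdot,V_\cdot))$, with $u^{i,T,N}\in C^{1,2}$ by parabolic Schauder estimates.

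\emph{Stage 2 (a priori $L^\infty$-bound on $Y$).} I compare $(Y^{i,T,N})_i$ to the scalar ODE solution $\bar y(t):=(K_f/\rho)(1-e^{-\rho(T-t)})\in[0,K_f/\rho]$ of $\bar y'=\rho\bar y-K_f$ with $\bar y(T)=0$. Substituting $Y^k\equiv\bar y$ for all $k$ together with $Z=0$ collapses the exponential coupling to zero and leaves the reduced generator $f^i(v,0)-\rho\bar y\le K_f-\rho\bar y=-\bar y'$. The monotonicity required by the multidimensional comparison theorem---namely that $\sum_k q^{ik}(e^{Y^k-Y^i}-1)$ is non-decreasing in each $Y^k$ for $k\ne i$---holds because $q^{ik}\ge 0$, and the $Z^i$-dependence of $f^i$ is absorbed by the standard BMO/Girsanov linearization. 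This yields $Y^{i,T,N}_t\le K_f/\rho$, and the symmetric argument gives $Y^{i,T,N}_t\ge -K_f/\rho$. Equivalently, a maximum principle on the parabolic PDE system satisfied by $u^{i,T,N}$ applies: at a (possibly $\varepsilon$-regularized) maximizer $(i^*,t^*,v^*)$ one has $\nabla u^{i^*}=0$, $\mathcal{L}u^{i^*}\le 0$, $\sum_k q^{i^*k}(e^{u^k-u^{i^*}}-1)\le 0$, and $|f^{i^*}(v^*,0)|\le K_f$, which together force $\rho u^{i^*}(t^*,v^*)\le K_f$.

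\emph{Stage 3 (a priori $Z$-bound via a self-consistent Lipschitz estimate).} Set $L:=\sup_{i,T,N,v}|\nabla_v u^{i,T,N}(\cdot,v)|$. I compare two copies of the system started from $v$ and $\bar v$: the dissipativity in Assumption~\ref{assumption_13} yields the pathwise bound $|V^v_t-V^{\bar v}_t|\le e^{-C_\eta t}|v-\bar v|$. Linearizing the difference BSDE, absorbing the $\Delta Z^i$-term by an $i$-dependent Girsanov change of measure, and using the $e^{-\rho t}$-discount coming from the $\rho Y^i$-term, combined with the multidimensional comparison theorem applied componentwise to dominate $|\Delta Y^k_t|\le L|\Delta V_t|$ for every $k$, leads to the self-consistent inequality
$$L\le \frac{C_v(1+L)}{C_\eta},$$
which rearranges to $L\le C_v/(C_\eta-C_v)=K_z$. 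Hence $|Z^{i,T,N}_t|=|\kappa^{tr}\nabla u^{i,T,N}(t,V_t)|\le K_z$. The $\rho$-independence of this bound will be crucial for the ergodic $\rho\to 0$ limit carried out in Section~3.

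\emph{Stage 4 (limits, uniqueness, and main obstacle).} Since $K_z$ does not depend on $N$, choosing $N=K_z$ makes $\pi_N(Z^{i,T,N})=Z^{i,T,N}$, so the truncated solution in fact solves the original system; letting $T\to\infty$ via a diagonal compactness argument combined with standard stability of BSDE systems with uniformly bounded $(Y,Z)$ then produces a bounded solution of (\ref{infhorizon_BSDE_system}) satisfying (\ref{estimate_yz}). Uniqueness among bounded solutions follows by applying the multidimensional comparison theorem to the difference of two such solutions, the $\rho Y^i$ term providing the strict monotonicity that forces the difference to vanish. The main obstacle is Stage~3: the Girsanov drifts absorbing $\Delta Z^i$ are $i$-dependent, so the estimate must be assembled across $m^0$ distinct equivalent measures, and preventing the coupling constants $q^{ik}$ from enlarging $K_z$ requires the multidimensional comparison theorem to be wielded delicately, exploiting the sign structure of $\mathcal{Q}$ (off-diagonal entries non-negative, row sums zero) so that at a maximizing index the coupling sum contributes non-positively and drops out of the final estimate.
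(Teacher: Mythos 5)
Your proposal is correct and follows essentially the same route as the paper: truncate the driver, approximate by finite-horizon systems, use the multidimensional comparison theorem against scalar ODE supersolutions to get the $K_y$-bound on $Y$ and the Lipschitz-in-$v$ estimate that yields the $K_z$-bound on $Z$, observe that $K_z$ reproduces the truncation level so the truncation is inactive, pass to the limit, and prove uniqueness by comparison after discounting by $e^{-\rho t}$. Your ``self-consistent inequality'' $L\le C_v(1+L)/C_\eta$ is exactly the fixed point the paper exploits by truncating $Z$ at $K_z$ from the outset (and the comparison theorem alone handles the $\Delta Z^i$-terms, so the $i$-dependent Girsanov measures you worry about are not actually needed); the only soft spot is the identification $Z^{i}=\kappa^{tr}\nabla_v u^{i}$, which under the mere Lipschitz hypotheses of Assumption~\ref{assumption_11} is obtained by mollifying $\eta$ and $f^i$, as the paper notes, rather than by parabolic Schauder estimates.
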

\begin{remark} As explained in the introduction,
we restricted our discussion within bounded solutions to BSDE~(\ref{infhorizon_BSDE_system}). This is for the sake of (i) the uniqueness of its adapted solutions; (ii) the discussion of its Markovian solutions; (iii) the discussion of the asymptotic behavior of solutions to (\ref{infhorizon_BSDE_system}) as the time horizon goes to infinity.
\end{remark}

The rest of this section is devoted to the proof of Theorem
\ref{theorem_infhorizon_BSDE_system}.

\subsection{Sketch of the proof}

To construct a solution of (\ref{infhorizon_BSDE_system}), we follow
a truncation procedure and a stability analysis. To this end, we
first define two truncating functions $p:\mathbb{R}\rightarrow\mathbb{R}$ and $q:\mathbb{R}^{d}\rightarrow \mathbb{%
R}^{d}$ by
\begin{equation}
p(y):=\max\{-K_y,\min\{y,K_y\}\}\ \ \text{and}\ \ q(z):=\frac{\min \left\{ |z|,K_z\right\} }{|z|}z\mathbf{1}%
_{\{z\neq 0\}}. \label{truncation}
\end{equation}%
We consider the truncated system of (\ref{infhorizon_BSDE_system}),
namely,
\begin{equation}\label{infhorizon_BSDE_system_truncation}
dY_t^i=-f^i(V_t,q(Z_t^i))dt-\sum_{k\in
I}q^{ik}(e^{p(Y_t^k)-p(Y_t^{i})}-1)dt+\rho
Y_t^idt+(Z_t^i)^{tr}dW_t,\vspace{-0.2cm}
\end{equation}
for $t\geq 0$ and $i\in I$.

Assumption \ref{assumption_11} $(i)$ and $(ii)$ imply that the
function $f^i(\cdot,q(\cdot))$ is Lipschitz continuous, i.e.
\begin{equation}
|{f}^i(v,q(z))-{f}^i(\bar{v},q(z)|\leq \frac{C_{\eta }C_{v}}{C_{\eta }-C_{v}}|v-%
\bar{v}|,  \label{driver3}
\end{equation}%
and%
\begin{equation}
|f^i(v,q(z))-{f}^i(v,q(\bar{z})|\leq C_{z}\frac{C_{\eta }+C_{v}}{C_{\eta }-C_{v}}%
|z-\bar{z}|.  \label{driver4}
\end{equation}%
It is also immediate to verify that $\sum_{k\in
I}q^{ik}(e^{p(y^k)-p(y^{i})}-1)-\rho y^i$ is continuous and has
bounded derivatives except at finite many points.
Thus, the driver of the truncated system
(\ref{infhorizon_BSDE_system_truncation}) is Lipschitz continuous.

If, moreover, we can show that
(\ref{infhorizon_BSDE_system_truncation}) admits a solution, say
$(Y^i,Z^i)_{i\in I}$, with $|Y_t^i|\leq K_y$ and $|{Z}_{t}^i|\leq
K_z$, then $p(Y_t^i)=Y_t^i$ and $q({Z}^i_{t})={Z}^i_{t}$, for $t\geq
0$ and $i\in I$. In turn, the pair of processes $({Y}^i,{Z}^i)_{i\in
I}$ also solve the original infinite horizon BSDE system
(\ref{infhorizon_BSDE_system}).

Next, we construct a solution to
(\ref{infhorizon_BSDE_system_truncation}) by an approximation
procedure. For $m\geq 1$ and $t\in[0,m]$, we consider the
\emph{finite horizon} BSDE system
\begin{align}\label{fhorizon_BSDE_system_truncation}
Y_t^i(m)=&\int_t^m\left[f^i(V_s,q(Z_s^i(m)))+\sum_{k\in
I}q^{ik}(e^{p(Y_s^k(m))-p(Y_s^{i}(m))}-1)-\rho Y_s^i(m)\right]ds\notag\\
&-\int_t^m(Z_s^i(m))^{tr}dW_s.
\end{align}
For $t> m$, we define $Y_t^{i}(m)=Z_t^{i}(m)\equiv 0$. Note that
(\ref{fhorizon_BSDE_system_truncation}) is a standard BSDE system
with Lipschitz continuous driver, so it admits a unique solution
$(Y^{i}(m),Z^{i}(m))_{i\in I}$.

We shall first establish uniform bounds (independent of $m$) on
$Y^i(m)$ and $Z^i(m)$ in Section \ref{subsection_estimate}.
Subsequently, we shall show in Section
\ref{subsection_proof_of_theorem} that the pair of processes
$(Y^i(m),Z^i(m))_{m\geq 1}$ is a Cauchy sequence in an appropriate
space, whose limit then provides a solution to the infinite horizon
BSDE system (\ref{infhorizon_BSDE_system}). Moreover, the uniqueness
of the solution relies on the multidimensional comparison theorem
introduced in the next subsection.

\subsection{Multidimensional comparison theorem}

The multidimensional comparison theorem
for systems of BSDE, was first established in \cite{HuPeng}. A different proof is given in Appendix \ref{Appendix_A} for
the reader's convenience.

\begin{lemma}\label{comparsion_lemma} For $T>0$, consider a system of
BSDEs $(\xi^i,F^i,G^i)$ with the terminal data $\xi^i$ and the driver
$(F^i,G^i)$, namely,
$$Y^{i}_t=\xi^i+\int_t^T\left[F^i_s(Z_s^i)+G^i_s(Y_s^i,Y_s^{-i})\right]ds-\int_t^T(Z_s^i)^{tr}dW_s,\quad t\in [0,T], $$
where
$Y_s^{-i}:=(Y_s^{1},\dots,Y_s^{i-1},Y_s^{i+1},\dots,Y_s^{m^0})$. Let
$(\bar{Y}^i,\bar{Z}^i)$ be the solution of the system of
BSDEs $(\bar{\xi}^i,\bar{F}^i,\bar{G}^i)$ with the
terminal data $\bar{\xi}^i$ and the driver $(\bar{F}^i,\bar{G}^i)$.
Suppose that

(i) both $\xi^i$ and $\bar{\xi}^i$ are square integrable and
satisfying $\xi^i\leq \bar{\xi}^i$ for $i\in I$;

(ii) there exist constants $C_f$ and $C_g$ such that, for $i\in
I$ and $z,\bar{z}\in\mathbb{R}^d$, $y=(y^i,y^{-i}),
\bar{y}=(\bar{y}^i,\bar{y}^{-i})\in\mathbb{R}^{m^0}$,
\begin{align}
|F^i_s(z)-F^i_s(\bar{z})|&\leq C_f|z-\bar{z}|\label{Lip_F},\\
|G^i_s(y^i,y^{-i})-G^i_s(\bar{y}^i,\bar{y}^{-i})|&\leq
C_g|y-\bar{y}|;\label{Lip_G}
\end{align}

(iii) the driver $G_s^{i}(y^i,{y}^{-i})$ is nondecreasing in all
of its components other than $y^i$, i.e. it is nondecreasing in
$y^{k}$, for $k\neq i$;

(iv) the following inequalities hold,
\begin{align}
F_s^{i}(\bar{Z}_s^i)&\leq \bar{F}_s^{i}(\bar{Z}_s^i),\label{compare_F}\\
G^i_s(\bar{Y}_s^i,\bar{Y}_s^{-i})&\leq
\bar{G}^i_s(\bar{Y}_s^i,\bar{Y}_s^{-i})\label{compare_G}.
\end{align}
Then, $Y_t^{i}\leq \bar{Y}_t^i$ for $t\in[0,T]$ and $i\in I$.
\end{lemma}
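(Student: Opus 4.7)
The plan is to linearise the system of differences $\delta Y^i := Y^i - \bar Y^i$, $\delta Z^i := Z^i - \bar Z^i$, apply It\^o's formula to the squared positive part $((\delta Y^i)^+)^2$, and close the argument with a backward Gronwall inequality after summing over $i$. The quasi-monotonicity hypothesis (iii) is precisely what allows the coupling between components to be controlled.

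First, using the Lipschitz assumptions (\ref{Lip_F})--(\ref{Lip_G}) and the standard secant-slope construction, I would write
$F^i_s(Z^i_s) - F^i_s(\bar Z^i_s) = \theta^i_s \cdot \delta Z^i_s$ and $G^i_s(Y^i_s, Y^{-i}_s) - G^i_s(\bar Y^i_s, \bar Y^{-i}_s) = \sum_{k \in I}\beta^{ik}_s \delta Y^k_s$,
with progressively measurable coefficients satisfying $|\theta^i_s| \le C_f$, $|\beta^{ii}_s| \le C_g$, and, crucially, $\beta^{ik}_s \ge 0$ for $k \ne i$ (from (iii)). Conditions (\ref{compare_F})--(\ref{compare_G}) then yield a non-positive inhomogeneity $\epsilon^i_s \le 0$, and (i) gives $\delta\xi^i \le 0$. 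The BSDE for $\delta Y^i$ is therefore a linear, quasi-monotone system with non-positive terminal and non-positive source.

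Next I would apply It\^o's formula to $((\delta Y^i_t)^+)^2$ (the map $x \mapsto (x^+)^2$ is $C^1$ with absolutely continuous derivative $2x^+$, so no local time term surfaces). The resulting drift contains a $\mathbf{1}_{\{\delta Y^i > 0\}}|\delta Z^i|^2$ from the It\^o correction, which I would absorb via Young's inequality applied to the cross term $-2(\delta Y^i)^+ \theta^i \cdot \delta Z^i$, producing $C_f^2 ((\delta Y^i)^+)^2$ plus the cancelling $\mathbf{1}_{\{\delta Y^i > 0\}}|\delta Z^i|^2$. The diagonal contribution is bounded by $2 C_g ((\delta Y^i)^+)^2$; the source $-2(\delta Y^i)^+ \epsilon^i$ is non-positive and can be discarded; and the off-diagonal coupling is handled by the key inequality
$-2(\delta Y^i)^+ \beta^{ik} \delta Y^k \le 2(\delta Y^i)^+ \beta^{ik} (\delta Y^k)^+,$
valid because $\beta^{ik} \ge 0$ and $\delta Y^k \le (\delta Y^k)^+$, and then further bounded by $C_g[((\delta Y^i)^+)^2 + ((\delta Y^k)^+)^2]$. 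After a localisation to justify discarding the martingale part, taking expectations and summing over $i$ yields $\Phi_t := \sum_{i \in I} \mathbb{E}[((\delta Y^i_t)^+)^2] \le C \int_t^T \Phi_s\,ds$ with $\Phi_T = 0$, and backward Gronwall forces $\Phi \equiv 0$, hence $\delta Y^i_t \le 0$ a.s.\ for every $i$ and $t$.

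The main obstacle is the handling of the off-diagonal coupling. The components $\delta Y^k$ can have arbitrary sign, and a naive bound $|\delta Y^k|$ would drag the negative part into the Gronwall loop and break the argument; the fix relies on both $\beta^{ik} \ge 0$ (quasi-monotonicity) and the elementary inequality $\delta Y^k \le (\delta Y^k)^+$, which together permit the clean replacement $\delta Y^k \to (\delta Y^k)^+$ inside a sign-preserving bound. Once this is in place, the remaining ingredients (Young's inequality, backward Gronwall, and martingale localisation) are standard.
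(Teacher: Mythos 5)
Your proposal is correct and follows essentially the same route as the paper's proof in Appendix A: It\^o's formula applied to $((\delta Y^i)^+)^2$, Young's inequality to absorb the $|\delta Z^i|^2$ term, the quasi-monotonicity of $G^i$ to replace the off-diagonal $\delta Y^k$ by $(\delta Y^k)^+$, and a backward Gronwall argument after summing over $i$. The only cosmetic difference is that you make the secant-slope linearization explicit, whereas the paper bounds the driver differences directly by inequalities; both versions correctly use the Lipschitz and comparison hypotheses only where the lemma assumes them (on $(F^i,G^i)$ and at $(\bar Y,\bar Z)$ respectively).
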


\begin{remark}
{Lemma~\ref{comparsion_lemma}, and its proof, simply correct a minor loss of inefficiency in the arguments developed in~\cite{HuPeng}. }  In \cite{HuPeng},  the Lipschitz conditions (\ref{Lip_F}) and (\ref{Lip_G}) are required to hold also for  $(\bar{F}_i,\bar{G}_i)$,  and  both inequalities (\ref{compare_F}) and (\ref{compare_G}) are required to hold for all $z^i\in\mathbb{R}$ and $y=(y^i,y^{-i})\in\mathbb{R}^{m^0}$.  In Lemma~\ref{comparsion_lemma},  the Lipschitz conditions on $(\bar{F}_i,\bar{G}_i)$ are not necessary,  and both inequalities (\ref{compare_F}) and (\ref{compare_G}) are required to hold only at the solution $(\bar{Y}^i,\bar{Z}^i)$.  Such an improvement is crucial and tailor made for our later use.
\end{remark}

\subsection{{A priori} estimates}\label{subsection_estimate}

We show that the pair of processes $(Y^{i}(m),Z^{i}(m))_{i\in I}$,
as the solution to the finite horizon BSDE system
(\ref{fhorizon_BSDE_system_truncation}), have the estimates
\begin{equation}\label{aprior_estimate} |Y^i_t(m)|\leq K_y\ \
\text{and}\ \ |Z^i_t(m)|\leq K_z,
\end{equation}
where the constants $K_y$ and $K_z$, independent of $m$, are given
in Theorem \ref{theorem_infhorizon_BSDE_system}.

\emph{The boundedness of $Y^i(m)$.} For $z\in\mathbb{R}^{d}$ and
$y=(y^{i},y^{-i})\in\mathbb{R}^{m^0}$, let
$$F_s^{i}(z):=f^i(V_s,q(z))\ \ \text{and}\ \ G_s^i(y^i,{y}^{-i}):=\sum_{k\in
I}q^{ik}(e^{p(y^k)-p(y^i)}-1)-\rho y^i.$$

Note that both $F^{i}_s(z)$ and $G^i_s(y^i,{y}^{-i})$ are Lipschitz
continuous, and $G^{i}_s(y^i,y^{-i})$ is nondecreasing in $y^k$ for
$k\neq i$. Moreover, by Assumption \ref{assumption_11}(iii),
$F_s^i(0)\leq K_f$ and $G^{i}_s(\bar{Y}_s,\bar{Y}_s^{-i})=-\rho
\bar{Y}_s,$ where
$\bar{Y}^{-i}:=(\underbrace{\bar{Y},\dots,\bar{Y}}_{m^0-1})$ and
$\bar{Y}$ solves the ODE\vspace{-0.2cm}
$$\bar{Y}_t=\int_t^m(K_f-\rho\bar{Y}_s)ds.$$
Consequently, it follows from Lemma \ref{comparsion_lemma} that
$Y_t^{i}(m)\leq \bar{Y}_t\leq \frac{K_f}{\rho}$, for $t\in[0,m]$ and
$i\in I$. Likewise, we also obtain that $Y_t^{i}(m)\geq
-\frac{K_f}{\rho}$, so $|Y_t^{i}(m)|\leq \frac{K_f}{\rho}=K_y$.
Hence, we have $p(Y_t^{i}(m))\equiv Y_t^i(m)$, i.e. the truncation
function $p(\cdot)$ does not play a role in BSDE system
(\ref{fhorizon_BSDE_system_truncation}).

\emph{The boundedness of $Z^i(m)$.} Denote by $V^{r,v}$ the solution of SDE~\eqref{factor} starting from $v\in\mathbb{R}^d$ at the initial time $r$, and by $(Y^{i,r,v}_t(m), Z^{i,r,v}_t(m)), t\in [r,T]$ the solution of BSDE~\eqref{fhorizon_BSDE_system_truncation} where the process $V$ is replaced with $V^{r,v}$.  Identically just as before, we have $|Y_t^{i,r,v}(m)|\le K_y.$

For $t\in[r, m]$ and $v,\bar{v}\in\mathbb{R}^d$, let
\begin{align*}
\delta Y_t^{i,r}(m):=Y_t^{i,r,v}(m)-Y_t^{i,r,\bar{v}}(m)\ \ \text{and}\ \
\delta Z_t^{i,r}(m):=Z_t^{i,r,v}(m)-Z_t^{i,r,\bar{v}}(m).
\end{align*}
It then follows from (\ref{fhorizon_BSDE_system_truncation}) that
\begin{align}\label{fhorizon_BSDE_system_truncation_2}
\delta Y_t^{i,r}(m)=&\int_t^m\left[f^i(V_s^{r,v},q(Z_s^{i,r,v}(m)))-f^i(V_s^{r, \bar{v}},q(Z_s^{i,r, \bar{v}}(m)))\right]ds\notag\\
&+\int_t^{m}\sum_{k\in
I}\left(q^{ik}(e^{Y_s^{k,r, v}(m)-Y_s^{i,r, v}(m)}-1)-q^{ik}(e^{Y_s^{k,r, \bar{v}}(m)-Y_s^{i,r, \bar{v}}(m)}-1)\right)ds\notag\\
&-\int_t^m\rho\, \delta Y_s^{i,r}(m)ds-\int_t^m(\delta
Z_s^{i,r}(m))^{tr}dW_s\notag\\
=&\int_t^{m}\left[F^{i,r}_{s}(\delta Z_s^{i,r}(m))+G_s^{i,r}(\delta
Y_s^{i,r}(m),\delta Y_s^{-i,r}(m))\right]ds-\int_t^m(\delta
Z_s^{i,r}(m))^{tr}dW_s,
\end{align}
where
\begin{align*}
F_s^{i,r}(z)=&\ f^i(V_s^{r,v},q(Z_s^{i,r,v}(m)))-f^i(V_s^{r,\bar{v}},q(Z_s^{i,r,v}(m)))\\
&\
+f^i(V_s^{r,\bar{v}},q(z+Z_s^{i,r, \bar{v}}(m)))-f^i(V_s^{r, \bar{v}},q(Z_s^{i,r, \bar{v}}(m)))
\end{align*}
and
$$G_s^{i,r}(y^{i},y^{-i})=\sum_{k\in I}q^{ik}\left(e^{y^{k}-y^{i}
+Y_s^{k,r, \bar{v}}(m)-Y_s^{i,r, \bar{v}}(m)}-e^{Y_s^{k,r, \bar{v}}(m)-Y_s^{i,r, \bar{v}}(m)}\right)-\rho
y^i,$$ for $z\in\mathbb{R}^d$ and
$y=(y^i,y^{-i})\in\mathbb{R}^{m^0}$, with $|y^i|\leq 2K_y$ for $i\in
I$.

Note that $F^{i,r}_s(z)$ and $G^{i,r}_s(y^i,{y}^{-i})$ are Lipschitz
continuous. Moreover, $G_s^{i,r}(0,0^{-i})=0$ and, by Assumption
\ref{assumption_11}(i) and the Lipschitz estimate (\ref{driver3}),
\begin{align*}
|F_s^{i,r}(0)|=&\ |f^i(V_s^{r,v},q(Z_s^{i,r,v}(m)))-f^i(V_s^{r,\bar{v}},q(Z_s^{i,r, v}(m)))|\\
\leq&\ \frac{C_vC_{\eta}}{C_{\eta}-C_v}|V^{r, v}_s-V^{r, \bar{v}}_s| \leq
\frac{C_vC_{\eta}}{C_{\eta}-C_v}e^{-C_{\eta}(s-r)}|v-\bar{v}|,\quad s\in [r,T],
\end{align*}
where the last inequality follows from the dissipative condition in
Assumption \ref{assumption_13} and Gronwall's inequality. Thus,
$(\delta Y^{i,r}(m),\delta Z^{i,r}(m))_{i\in I}$ is the unique solution to
(\ref{fhorizon_BSDE_system_truncation_2}). Furthermore, note that
$G^{i,r}_s(y^i,y^{-i})$ is nondecreasing in $y^k$ for $k\neq i$ and
$G^{i,r}_s(\bar{Y}_s^r,(\bar{Y}_s^r)^{-i})=-\rho \bar{Y}_s^r,$ where $\bar{Y}^r$
is the unique solution of  the ODE
$$Y_t=\int_t^m\left(\frac{C_vC_{\eta}}{C_{\eta}-C_v}e^{-C_{\eta}(s-r)}|v-\bar{v}|-\rho Y_s\right)ds, \quad t\in [r,T].$$
Consequently, from Lemma \ref{comparsion_lemma}, we have
\begin{align}\label{Lip_upper}
\delta Y_t^{i,r}(m)\leq
\bar{Y}_t^r&=\frac{C_vC_{\eta}}{C_{\eta}-C_v}\frac{e^{\rho t}(e^{-(\rho t+C_{\eta} (t-r))}-e^{-(\rho m+C_{\eta}(m-r))})}{\rho+C_{\eta}}|v-\bar{v}|\notag\\
&\leq\frac{C_v}{C_{\eta}-C_v}|v-\bar{v}|
\end{align}
for $t\in[r,m]$ and $i\in I$. Likewise, we also have
\begin{equation}\label{Lip_lower}
\delta Y_t^{i,r}(m)\geq\frac{-C_v}{C_{\eta}-C_v}|v-\bar{v}|.
\end{equation}

Note that the process $Y^{i,r, v}(m)$ admits a Markovian representation, i.e. there exists a measurable
function $\mathbf{y}^{i}(\cdot,\cdot;m)$ such that
$Y_t^{i, r, v}(m)=\mathbf{y}^{i}(t,V_t^{r,v};m)$ (see Theorem 4.1 in \cite{ElKaroui1997}). If the coefficient $\eta$ and the driver $f^i$ are further  continuously differentiable functions with bounded derivatives,  the function $v\mapsto \mathbf{y}^i(t,v;m)$ is continuously differentiable such that (see \cite[Corollary 4.1]{ElKaroui1997})
\begin{equation}\label{relation}
\kappa^{tr}\nabla_v
\mathbf{y}^i(t,V_t^{r,v};m)=Z_t^{i,r, v}(m).
\end{equation}  From  (\ref{Lip_upper}) and (\ref{Lip_lower}), we have for $(i, v_1,v_2)\in I \times \mathbb{R}^d\times \mathbb{R}^d,$
\begin{equation}\label{gradient_estimate}
|\mathbf{y}^{i}(t,v_1;m)-\mathbf{y}^{i}(t,v_2;m)|\leq K_z|v_1-v_2| \text{ with }  K_z:=\frac{C_v}{C_{\eta}-C_v}.
\end{equation}
In view of  Assumption
\ref{assumption_13} on $\kappa$, we have
$|Z_t^{i,r,v}(m)|\leq K_z$, which can be shown (via  mollifying the coefficient $\eta$ and the driver $f^i$ in a straightforward manner)  to hold also for our general $(\eta, f)$.
 Therefore, the \emph{a priori} estimates (\ref{aprior_estimate}) on
$Y^i(m)=Y^{i,0,v}(m)$ and $Z^i(m)=Z^{i,0, v}(m)$ have been proved.

\subsection{Proof of Theorem \ref{theorem_infhorizon_BSDE_system}}
\label{subsection_proof_of_theorem}

\emph{Existence.} We first prove that $(Y^{i}(m))_{m\geq 1}$ is a
Cauchy sequence. For $m\geq n\geq 1$ and $t\in[0,m]$, let
\begin{align*}
\delta Y_t^i(m,n):=Y_t^{i}(m)-Y_t^{i}(n)\ \ \text{and}\ \ \delta
Z_t^i(m,n):=Z_t^{i}(m)-Z_t^{i}(n).
\end{align*}
Since we have already shown in the last section that
$|Y_t^{i}(m)|\leq K_y$ and $|Z^{i}_t(m)|\leq K_z$, the truncation
functions $p(\cdot)$ and $q(\cdot)$ actually do not play any role in
(\ref{fhorizon_BSDE_system_truncation}), and we have
$(p(Y^i_t(m)),q(Z^i_t(m))=(Y_t^{i}(m),Z_t^i(m))$. In turn,
\begin{align}\label{fhorizon_BSDE_system_truncation_3}
\delta Y_t^{i}(m,n)
=&\int_t^m\left[f^i(V_s,Z_s^{i}(m))-f^i(V_s,Z_s^{i}(n))\right]ds
+\int_t^mf^i(V_s,0)\chi_{\{s\geq n\}}ds\notag\\
&+\int_t^m\sum_{k\in
I}\left(q^{ik}(e^{Y_s^{k}(m)-Y_s^{i}(m)}-1)-q^{ik}(
e^{Y_s^{k}(n)-Y_s^{i}(n)}-1)\right)ds\notag\\
&-\int_t^m\rho\delta Y_s^{i}(m,n)ds-\int_t^m(\delta
Z_s^{i}(m,n))^{tr}dW_s\notag\\
=&\int_t^{m}\left[F^i_{s}(\delta Z_s^i(m,n))+G_s^{i}(\delta
Y_s^{i}(m,n),\delta Y_s^{-i}(m,n))\right]ds\notag\\
&-\int_t^m(\delta Z_s^{i}(m,n))^{tr}dW_s,
\end{align}
where
\begin{equation*}
F_s^{i}(z)=f^i(V_s,z+Z_s^{i}(n))-f^i(V_s,Z_s^{i}(n))+f^i(V_s,0)\chi_{\{s\geq
n\}},
\end{equation*}
and
$$G_s^{i}(y^{i},y^{-i})=\sum_{k\in I}q^{ik}\left(e^{y^{k}-y^{i}
+Y_s^{k}(n)-Y_s^{i}(n)}-e^{Y_s^{k}(n)-Y_s^{i}(n)}\right)-\rho y^i,$$
for $z\in\mathbb{R}^d$ and $y=(y^{i},y^{-i})\in\mathbb{R}^{m^0}$,
with $|z|\leq 2K_z$ and $|y^{i}|\leq 2K_y$ for $i\in I$.

Following along similar arguments as in section
\ref{subsection_estimate}, we deduce that
(\ref{fhorizon_BSDE_system_truncation_3}) is with Lipschitz
continuous driver and, therefore, $(\delta Y^i(m,n),\delta
Z^i(m,n))_{i\in I}$ is the unique solution to
(\ref{fhorizon_BSDE_system_truncation_3}). Moreover, by Assumption
\ref{assumption_11}(iii), we have
$F_s^{i}(0)=f^i(V_s,0)\chi_{\{s\geq n\}}\leq K_f\chi_{\{s\geq n\}}$
and $G_s^{i}(\bar{Y}_s,\bar{Y}_s^{-i})=-\rho \bar{Y}_s$, with
$\bar{Y}$ solving the ODE
$$\bar{Y}_t=\int_t^m\left(K_f\chi_{\{s\geq n\}}-\rho\bar{Y}_s\right)ds$$
Hence, using Lemma \ref{comparsion_lemma}, we obtain
\begin{equation}\label{Cauchy_upper}
\delta Y_t^{i}(m,n)\leq \bar{Y}_t\leq K_f\int_{ n}^{m}e^{-\rho(s-t)}ds=\frac{K_f}{\rho}e^{\rho t}(e^{-\rho n}-e^{-\rho m}),
\end{equation}
for $t\in[0,m]$ and $i\in I$. Likewise, we also have
\begin{equation}\label{Cauchy_lower}
\delta Y_t^{i}(m,n)\geq -\frac{K_f}{\rho}e^{\rho t}(e^{-\rho n}-e^{-\rho m}).
\end{equation}
Sending $m,n\rightarrow \infty$, we obtain that, for any $T>0$,
$\sup_{t\in[0,T]}|\delta Y_t^{i}(m,n)|\rightarrow 0$ and, therefore,
there exists a limit process $Y^i$ such that $Y_t^{i}(m)\rightarrow
Y_t^i$ for almost every $(t,\omega)\in[0,\infty)\times\Omega$, with
$|Y_t^i|\leq K_y$.

To prove that $Z^i(m)$ is also a Cauchy sequence, we introduce the
Banach space
\[
\mathcal{L}^{2,\rho}:=\left\{(Z_t)_{t\geq 0}: Z\ \text{is
progressively measurable and}\ \mathbb{E}[\int_0^{\infty}e^{-2\rho
s}|Z_s|^2ds]<\infty\right\}.
\]

Applying It\^o's formula to $e^{-2\rho t}|\delta Y_t^{i}(m,n)|^2$
and using (\ref{fhorizon_BSDE_system_truncation_3}), we get
\begin{align}\label{fhorizon_BSDE_system_truncation_5}
&|\delta Y_0^{i}(m,n)|^2+\int_0^{m}e^{-2\rho s}|\delta Z_s^i(m,n)|^2ds\notag\\
=&\int_0^m\underbrace{2e^{-2\rho s}\delta Y_s^i(m,n)\left[f^i(V_s,Z_s^{i}(m))-f^i(V_s,Z_s^{i}(n))\right]}_{(I)}ds\notag\\
&+\int_0^m2e^{-2\rho s}\delta Y_s^i(m,n)f^i(V_s,0)\chi_{\{s\geq n\}}ds\notag\\
&+\int_0^m2e^{-2\rho s}\delta Y_s^i(m,n)\sum_{k\in
I}q^{ik}\left(e^{Y_s^{k}(m)-Y_s^{i}(m)}-
e^{Y_s^{k}(n)-Y_s^{i}(n)}\right)ds\notag\\
&-\int_0^m2e^{-2\rho s}\delta Y_s^i(m,n)(\delta
Z_s^{i}(m,n))^{tr}dW_s.
\end{align}
Furthermore, we apply the elementary inequality $2ab\leq
\frac{1}{\epsilon}|a|^2+\epsilon|b|^2$ to term $(I)$ and obtain
\begin{align*}
(I)\leq&\ \frac12e^{-2\rho
s}\frac{|f^i(V_s,Z_s^i(m))-f^i(V_s,Z_s^i(n))|^2}{C_z^2(1+2K_z)^2}
+2C_z^2(1+2K_z)^2e^{-2\rho s}|\delta Y_s^i(m,n)|^2\\
\leq&\ \frac12e^{-2\rho s}|\delta
Z_s^i(m,n)|^2+2C_z^2(1+2K_z)^2e^{-2\rho s}|\delta Y_s^i(m,n)|^2,
\end{align*}
where we also used Assumption \ref{assumption_11}(ii) and the
\emph{a priori} estimate (\ref{aprior_estimate}) on $Z^i(m)$ in the
second equality.

In turn, taking expectation on both sides of
(\ref{fhorizon_BSDE_system_truncation_5}) and using the \emph{a
priori} estimate (\ref{aprior_estimate}) on $Y^i(m)$ yield
\begin{align*}
&\frac{1}{2}\mathbb{E}\left[\int_0^{m}e^{-2\rho s}|\delta
Z_s^i(m,n)|^2ds\right]\\
\leq&\ 2C_z^2(1+2K_z)^2\mathbb{E}\left[\int_0^me^{-2\rho s}|\delta
Y_s^i(m,n)|^2ds\right]+ 2K_f\mathbb{E}\left[\int_n^me^{-2\rho
s}\delta Y_s^i(m,n)ds\right]\\
&+4m^0q^{\max}e^{2K_y}\mathbb{E}\left[\int_0^me^{-2\rho s}\delta
Y_s^i(m,n)ds\right].
\end{align*}
The dominated convergence theorem then implies $\delta
Z^i(m,n)\rightarrow 0$ in $\mathcal{L}^{2,\rho}$ and, therefore,
there exists a limit process $Z^i$ such that $Z^i(m)\rightarrow Z^i$
in $\mathcal{L}^{2,\rho}$, with $|Z^i_t|\leq K_z$.

It is standard to check that the pair of limit processes
$(Y^i,Z^i)_{i\in I}$ indeed satisfy the infinite horizon BSDE system
(\ref{infhorizon_BSDE_system}). See, for example, section 5 of
\cite{Briand2}.

\emph{Uniqueness.} Since both $Y^{i}$ and $Z^i$ are bounded, the
uniqueness of the bounded solution $(Y^i,Z^i)_{i\in I}$ to
(\ref{infhorizon_BSDE_system}) follows from the multidimensional
comparison theorem in Lemma \ref{comparsion_lemma}. Indeed, suppose
$(Y^i,Z^i)_{i\in I}$ and $(\bar{Y}^i,\bar{Z}^i)_{i\in I}$ are two
bounded solutions to (\ref{infhorizon_BSDE_system}). For $t\geq 0$,
let
\begin{align*}
\delta Y_t^i:=e^{-\rho t}(Y_t^{i}-\bar{Y}_t^{i})\ \ \text{and}\ \
\delta Z_t^i:=e^{-\rho t}(Z_t^{i}-\bar{Z}_t^{i}).
\end{align*}
For $T \geq t$, let $\varepsilon_T:=2K_ye^{-\rho T}$. Then, for
$0\leq t\leq T$,
\begin{align}\label{infhorizon_BSDE_system_111}
\delta Y_t^{i} =&\ \delta Y_T^i+\int_t^T e^{-\rho
s}\left[f^i(V_s,Z_s^{i})-f^i(V_s,\bar{Z}_s^{i})\right]ds
\notag\\
&+\int_t^Te^{-\rho s}\sum_{k\in
I}\left(q^{ik}(e^{Y_s^{k}-Y_s^{i}}-1)-q^{ik}(
e^{\bar{Y}_s^{k}-\bar{Y}_s^{i}}-1)\right)ds\notag\\
&-\int_t^T(\delta
Z_s^{i})^{tr}dW_s\notag\\
=&\ \delta Y_T^i+\int_t^{T}\left[F^i_{s}(\delta
Z_s^i)+G_s^{i}(\delta Y_s^{i},\delta
Y_s^{-i})\right]ds-\int_t^T(\delta Z_s^{i})^{tr}dW_s,
\end{align}
where
\begin{equation*}
F_s^{i}(z)=e^{-\rho s}[f^i(V_s,e^{\rho
s}z+\bar{Z}_s^{i})-f^i(V_s,\bar{Z}_s^{i})],
\end{equation*}
and
$$G_s^{i}(y^{i},y^{-i})=e^{-\rho s}\sum_{k\in I}q^{ik}\left(e^{e^{\rho
s}(y^{k}-y^{i})
+\bar{Y}_s^{k}-\bar{Y}_s^{i}}-e^{\bar{Y}_s^{k}-\bar{Y}_s^{i}}\right),$$
for $z\in\mathbb{R}^d$ and $y=(y^{i},y^{-i})\in\mathbb{R}^{m^0}$,
with $|z|\leq 2K_z$ and $|y^{i}|\leq 2K_y$ for $i\in I$.

We apply similar arguments as in section \ref{subsection_estimate}
to deduce that (\ref{infhorizon_BSDE_system_111}) is with Lipschitz
continuous driver and, therefore, $(\delta Y^i,\delta Z^i)_{i\in I}$
is the unique solution to (\ref{infhorizon_BSDE_system_111}).
Moreover, note that $$|\delta Y_T^i|\leq 2K_ye^{-\rho
T}=\varepsilon_T,\ \ F_s^i(0)=0\ \ \text{and}\ \
G_s^i(\varepsilon_T,\varepsilon_T^{-i})=0.$$ By Lemma
\ref{comparsion_lemma}, we deduce that $|\delta Y_t^i|\leq
\varepsilon_T$ and, therefore, $\delta Y_t^i=0$ by sending
$T\rightarrow\infty$. Consequently, $\delta Z_t^i=0$, which proves
the uniqueness of the solution to the infinite horizon BSDE system
(\ref{infhorizon_BSDE_system}).

\section{System of ergodic quadratic BSDEs}

We study the asymptotics of the infinite horizon BSDE system
(\ref{infhorizon_BSDE_system}) when $\rho\rightarrow 0$, which leads
to a new type of ergodic BSDE systems. The ergodic BSDE system will in turn be used to construct regime switching forward performance processes (section \ref{section:regime_switching_forward}) and obtain the large time behavior of PDE systems (section \ref{section: large_time}).
To this end, we require that
the transition rate matrix $\mathcal{Q}$ in Assumption
\ref{assumption_12} satisfies some sort of irreducible property.

\begin{assumption}\label{assumption_15} The transition rate matrix $\mathcal{Q}$
satisfies $q^{ik}>0$, for $i\neq k$. Let $q^{\min}>0$ be the minimal
transition rate, i.e. $q^{\min}=\min_{i\neq k}q^{ik}$.
\end{assumption}

We first show that, under Assumption \ref{assumption_15}, the
difference of any two components, say $Y^i$ and $Y^{j}$, of the
solution to (\ref{infhorizon_BSDE_system}) is actually bounded
uniformly in $\rho$ .

\begin{lemma}\label{lemma_difference}
Suppose that Assumptions \ref{assumption_11}-\ref{assumption_15} are
satisfied. For $i,j\in I$ and $t\geq 0$, let $\Delta
Y_t^{ij}=Y^i_t-Y^j_t$. Then,
\begin{equation}\label{diff_estimate_0}
|\Delta Y_t^{ij}|\leq
\frac{1}{q^{\min}}\left(K_f+\frac{C_vC_{\eta}C_z}{(C_{\eta}-C_v)^2}\right),
\end{equation}
with the constants $K_f, C_{v}, C_z$ as in Assumption
\ref{assumption_11}, and $C_{\eta}$ as in Assumption
\ref{assumption_13}.
\end{lemma}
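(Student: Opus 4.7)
My approach exploits the Markovian structure of the unique bounded solution from Theorem~\ref{theorem_infhorizon_BSDE_system} and a maximum principle on the associated semilinear elliptic PDE system. Since the driver in~\eqref{infhorizon_BSDE_system} is time-homogeneous and $V$ is an autonomous ergodic diffusion, uniqueness in the bounded class forces a time-homogeneous Markov representation $Y^i_t = y^i(V_t)$, obtainable by passing to the $m\to\infty$ limit of the Markovian finite-horizon solutions $y^i(\cdot,\cdot;m)$ of Section~\ref{subsection_estimate}. Matching the It\^o expansion of $y^i(V_t)$ against~\eqref{infhorizon_BSDE_system} identifies $Z^i_t = \kappa^{tr}\nabla y^i(V_t)$ (so $|\kappa^{tr}\nabla y^i|\le K_z$ by~\eqref{gradient_estimate}) and shows that each $y^i$ solves the elliptic equation
\[
\mathcal{L}y^i(v)+f^i\bigl(v,\kappa^{tr}\nabla y^i(v)\bigr)+\sum_{k\in I}q^{ik}\bigl(e^{y^k(v)-y^i(v)}-1\bigr)-\rho\, y^i(v)=0,
\]
with $\mathcal{L}$ the generator of $V$. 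Estimating $|\Delta Y^{ij}|$ uniformly then reduces to bounding $M:=\sup_{v\in\mathbb{R}^d}\max_{i,j\in I}(y^i(v)-y^j(v))$.

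\textbf{Maximum principle step.} Assume for simplicity that $M$ is attained at some $v_0\in\mathbb{R}^d$ and some pair $(i^*,j^*)$ with $i^*\neq j^*$; if not, a standard penalization by $-\varepsilon|v|^2$ followed by $\varepsilon\downarrow 0$ produces an approximate attainer. At $v_0$ the first- and second-order optimality conditions give $\nabla y^{i^*}(v_0)=\nabla y^{j^*}(v_0)$ and $\mathcal{L}(y^{i^*}-y^{j^*})(v_0)\le 0$, so the common value $Z^*:=\kappa^{tr}\nabla y^{i^*}(v_0)$ satisfies $|Z^*|\le K_z$. Subtracting the two PDEs at $v_0$ and using the second-order condition yields
\[
\rho M\le \bigl[f^{i^*}(v_0,Z^*)-f^{j^*}(v_0,Z^*)\bigr]+\sum_{k\in I} q^{i^*k}\bigl(e^{y^k(v_0)-y^{i^*}(v_0)}-1\bigr)-\sum_{k\in I} q^{j^*k}\bigl(e^{y^k(v_0)-y^{j^*}(v_0)}-1\bigr).
\]
Because $(i^*,j^*)$ realizes $M$, we have $y^{j^*}(v_0)\le y^k(v_0)\le y^{i^*}(v_0)$ for every $k\in I$, so every summand in the first sum is non-positive and every summand in the second is non-negative. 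Keeping only the $k=j^*$ (resp.\ $k=i^*$) contribution provides the crucial upper bounds $\sum_k q^{i^*k}(\cdot)\le q^{i^*j^*}(e^{-M}-1)$ and $-\sum_k q^{j^*k}(\cdot)\le -q^{j^*i^*}(e^M-1)$.

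\textbf{Conclusion.} Moving these terms to the left and invoking $q^{i^*j^*},q^{j^*i^*}\ge q^{\min}$ (Assumption~\ref{assumption_15}) together with the elementary identity $(e^M-1)+(1-e^{-M})=e^M-e^{-M}=2\sinh(M)\ge 2M$ for $M\ge 0$, the coupling contribution to the right-hand side is bounded above by $-2q^{\min}M$, so
\[
(\rho+2q^{\min})M\le f^{i^*}(v_0,Z^*)-f^{j^*}(v_0,Z^*).
\]
From Assumptions~\ref{assumption_11}(ii)--(iii) and $|Z^*|\le K_z$ we get $|f^i(v_0,Z^*)|\le K_f+C_z(1+K_z)K_z=K_f+\frac{C_vC_\eta C_z}{(C_\eta-C_v)^2}$, so the right-hand side is at most twice this quantity; dividing by $\rho+2q^{\min}\ge 2q^{\min}$ absorbs the factor $2$ and, together with the symmetry $(i,j)\leftrightarrow(j,i)$, yields~\eqref{diff_estimate_0}. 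The main obstacle is justifying the maximum principle rigorously when $y^i$ is only Lipschitz: I plan to handle this by first mollifying $\eta$ and $f^i$ (as at the end of Section~\ref{subsection_estimate}) so that the finite-horizon approximants $y^i(t,v;m)$ are $C^{1,2}$, applying the parabolic analogue of the argument above uniformly in $(t,m)$, and then passing to the limit via the Cauchy estimates developed in Section~\ref{subsection_proof_of_theorem}.
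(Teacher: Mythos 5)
Your argument is correct in substance and reaches the same constant, but it travels a genuinely different route from the paper's. The paper never touches a PDE: it forms, for each pair $(i,j)$, the finite-horizon BSDE system \eqref{fhorizon_BSDE_system_truncation_1} satisfied by $\Delta Y^{ij}(m)=Y^i(m)-Y^j(m)$, bounds the driver at zero by $2K_f+2C_z(K_z+K_z^2)$ (the same quantity $2\bigl(K_f+\tfrac{C_vC_\eta C_z}{(C_\eta-C_v)^2}\bigr)$ you obtain for $f^{i^*}-f^{j^*}$), observes that the coupling term evaluated on the constant candidate satisfies $G^i_s(\bar Y_s,\bar Y_s^{-i})\le -2q^{\min}\bar Y_s$ (your $-2q^{\min}M$ via $2\sinh(M)\ge 2M$, in disguise), and then invokes the multidimensional comparison theorem (Lemma \ref{comparsion_lemma}) against the scalar ODE $\bar Y_t=\int_t^m 2[K_f+C_z(K_z+K_z^2)-q^{\min}\bar Y_s]ds$, finally letting $m\to\infty$. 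Your version replaces the probabilistic comparison with a maximum principle at the extremal pair $(i^*,j^*)$ of the stationary elliptic system; the sign analysis of the coupling term at the extremal pair and the use of $q^{i^*j^*},q^{j^*i^*}\ge q^{\min}$ are the exact analytic counterparts of the paper's ODE supersolution. What the paper's route buys is that it works directly with Lipschitz data and square-integrable solutions — no differentiability of $\mathbf{y}^i$, no attainment of the supremum over the non-compact domain, and no separate passage from finite to infinite horizon beyond the Cauchy estimates already in place — whereas your route must pay for all three (mollification of $\eta,f^i$, penalization by $-\varepsilon|v|^2$, and a parabolic version uniform in $(t,m)$), which you correctly identify as the technical debt; these steps are standard but not free, e.g.\ one must check that $\varepsilon|v_\varepsilon|\to 0$ under the dissipativity of $\eta$ so that the penalization error in $\mathcal{L}$ vanishes, and that the maximizing pair stabilizes along a subsequence. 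What your route buys in exchange is transparency: it isolates exactly where the constant comes from (the driver bound at $|z|\le K_z$ divided by the coupling coefficient $2q^{\min}$) and works on the stationary equation without introducing the auxiliary ODE or the pair-indexed BSDE system.
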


\begin{proof} It suffices to prove that, for $m\geq 1$,
\begin{equation}\label{diff_estimate}
|\Delta Y_t^{ij}(m)|:=|Y_t^{i}(m)-Y_t^{j}(m)|\leq
\frac{1}{q^{\min}}\left(K_f+\frac{C_vC_{\eta}C_z}{(C_{\eta}-C_v)^2}\right).
\end{equation}
Then, (\ref{diff_estimate_0}) follows by sending
$m\rightarrow\infty$.

To this end, let $\Delta Z_t^{ij}(m)=Z^i_t(m)-Z^j_t(m)$. It is
immediate to check that the pair of processes $(\Delta
Y^{ij}(m),\Delta Z^{ij}(m))_{i,j\in I}$ satisfy
\begin{align}\label{fhorizon_BSDE_system_truncation_1}
\Delta Y_t^{ij}(m)=&\int_t^m\left[f^i(V_s,Z_s^i(m))-f^j(V_s,Z_s^j(m))\right]ds\notag\\
&+\int_t^{m}\sum_{k\in
I}\left(q^{ik}(e^{Y_s^k(m)-Y_s^{i}(m)}-1)-q^{jk}(e^{Y_s^k(m)-Y_s^{j}(m)}-1)\right)ds\notag\\
&-\int_t^m\rho\Delta Y_s^{ij}(m)ds-\int_t^m(\Delta
Z_s^{ij})^{tr}dW_s\notag\\
=&\int_t^{m}\left[F^{ij}_{s}(\Delta Z_s^{ij}(m))+G_s^{ij}(\Delta
Y_s^{ij}(m),\Delta Y_s^{-ij}(m))\right]ds\notag\\
&-\int_t^m(\Delta Z_s^{ij}(m))^{tr}dW_s,
\end{align}
where
\begin{align*}
F_s^{ij}(z)=&\ f^i(V_s,z+Z_s^{j}(m))-f^j(V_s,Z_s^j(m)),
\end{align*}
and
\begin{align*}
G_s^{ij}(y^{ij},y^{-ij})=&\ q^{ij}e^{-y^{ij}}-q^{ji}e^{y^{ij}}-\rho
y^{ij}+\sum_{k\neq j}q^{ik}e^{y^{ki}}-\sum_{k\neq
i}q^{jk}e^{-y^{jk}},
\end{align*}
for $z\in\mathbb{R}^d$ and $y=(y^{ij},y^{-ij})\in\mathbb{R}^{m^0}$,
with $|z|\leq 2K_z$ and $|y^{ij}|\leq 2K_y$ for $i,j\in I$.

Since $F^{ij}_s(z)$ and $G_s^{ij}(y^{ij},y^{-ij})$ are Lipschitz
continuous, following along similar arguments as in section
\ref{subsection_estimate}, we deduce that $(\Delta Y^{ij}(m),\Delta
Z^{ij}(m))_{i,j\in I}$ is the unique solution to BSDE system
(\ref{fhorizon_BSDE_system_truncation_1}). Moreover, by Assumption
\ref{assumption_11}(ii)-(iii), we have, for $v,z\in\mathbb{R}^d$,
$|f^i(v,z)|\leq K_f+C_z(|z|+|z|^2),$ so
$$F_s^{ij}(0)=f^i(V_s,Z_s^{j}(m))-f^j(V_s,Z_s^j(m))\leq 2K_f+2C_z(K_z+K_z^2).$$
Using $\sum_{k\neq j}q^{ik}=-q^{ij}$ and $\sum_{k\neq
i}q^{jk}=-q^{ji}$, we also have
\begin{align*}
G_s^{i}(\bar{Y}_s,\bar{Y}_s^{-i})=-(q^{ij}+q^{ji})(e^{\bar{Y}_s}-e^{-\bar{Y}_s})-\rho
\bar{Y}_s,
\end{align*}
where $\bar{Y}$ solves the ODE
$$\bar{Y}_t=\int_t^m2\left[K_f+C_z(K_z+K_z^2)-q^{min}\bar{Y}_s\right]ds.$$
Since $0\leq \bar{Y}_t\leq \frac{K_f+C_z(K_z+K_z^2)}{q^{min}}$, we
further have
\begin{align*}
G_s^{i}(\bar{Y}_s,\bar{Y}_s^{-i})&\leq
-(q^{ij}+q^{ji})(e^{\bar{Y}_s}-e^{-\bar{Y}_s})\\
&\leq-2q^{min}(\bar{Y}_s+1-e^{-\bar{Y}_s})\leq -2q^{\min}\bar{Y}_s,
\end{align*}
and, consequently, using Lemma \ref{comparsion_lemma} we deduce that
$$\Delta Y_t^{ij}(m)\leq \bar{Y}_t\leq
\frac{K_f+C_z(K_z+K_z^2)}{q^{min}}.$$ By the symmetric property, we
also have $\Delta Y_t^{ji}(m)\leq
\frac{K_f+C_z(K_z+K_z^2)}{q^{min}}$, from which we obtain estimate
(\ref{diff_estimate}).
\end{proof}

Next,  we send $\rho\rightarrow 0$ in the infinite horizon BSDE
system (\ref{infhorizon_BSDE_system}). To emphasize the dependencies
on $\rho$ and $V_0=v$, we use the notations $V_t^v, Y_t^{i,\rho,v}$
and $Z_t^{i,\rho,v}$ in the rest of this section. Sending
$m\rightarrow \infty$ in the estimate
(\ref{gradient_estimate}) yields that, for the first component
$Y_t^{i,\rho,v}=\mathbf{y}^{i,\rho}(V_t^{v})$ of the solution to
(\ref{infhorizon_BSDE_system}),
\begin{equation}\label{Lip_estimate}
|\mathbf{y}^{i,\rho}(v_1)-\mathbf{y}^{i,\rho}(v_2)|\leq \frac{C_{v}}{C_{\eta}-C_v}|v_1-v_2|, \quad v_1,v_2\in \mathbb{R}^d.
\end{equation}

Given a fixed reference point, say $v_0\in\mathbb{R}^d$, we define
the processes
$\bar{Y}_t^{i,\rho,v}:=Y_t^{i,\rho,v}-Y_0^{m^0,\rho,v_0}$, for
$t\geq 0$, $i\in I$ and $v\in\mathbb{R}^d$, and consider the
perturbed version of the infinite horizon BSDE system
(\ref{infhorizon_BSDE_system})\footnote{There is nothing special about the choice of the reference point $m^0$. Any regime $j\in I$ will also serve the purpose.}, i.e.
\begin{align}\label{perturbed_BSDE}
\bar{Y}_t^{i,\rho,v}=&\
\bar{Y}_T^{i,\rho,v}+\int_t^T\left[\sum_{k\in
I}q^{ik}(e^{\bar{Y}_s^{k,\rho,v}-\bar{Y}_s^{i,\rho,v}}-1)-\rho
\bar{Y}_s^{i,\rho,v}+\rho
{Y}_0^{m^{0},\rho,v_0}\right]ds\notag\\
&+\int_t^Tf^i(V_s^{v},Z_s^{i,\rho,v})ds
-\int_t^T(Z_s^{i,\rho,v})^{tr}dW_s,
\end{align}
for $0\leq t\leq T<\infty$, $i\in I$ and $v\in\mathbb{R}^d$. By the
Markov property of $Y^{i,\rho,v}$ (see Theorem 4.1 in \cite{ElKaroui1997}), we have
$\bar{Y}_t^{i,\rho,v}=\bar{\mathbf{y}}^{i,\rho}(V_t^v)$ with
$\bar{\mathbf{y}}^{i,\rho}(\cdot):=\mathbf{y}^{i,\rho}(\cdot)-\mathbf{y}^{m^0,\rho}(v_0)$.

Note that, by estimate (\ref{Lip_estimate}),
${\mathbf{y}}^{i,\rho}(\cdot)$ is Lipschitz continuous uniformly in
$\rho$, and by estimate (\ref{diff_estimate_0}),
$\bar{\mathbf{y}}^{i,\rho}(v_0)=\mathbf{y}^{i,\rho}(v_0)-\mathbf{y}^{m^0,\rho}(v_0)$
is bounded uniformly in $\rho$. In turn, we deduce that, for
$v\in\mathbb{R}^d$,
\begin{align}\label{estimate_y_rho}
|\bar{\mathbf{y}}^{i,\rho}(v)|&=|\mathbf{y}^{i,\rho}(v)-\mathbf{y}^{i,\rho}(v_0)+\mathbf{y}^{i,\rho}(v_0)-\mathbf{y}^{m^0,\rho}(v_0)|\notag\\
&\leq
\frac{C_{v}}{C_{\eta}-C_v}|v-v_0|+\frac{1}{q^{\min}}\left(K_f+\frac{C_vC_{\eta}C_z}{(C_{\eta}-C_v)^2}\right).
\end{align}
Moreover, (\ref{estimate_yz}) implies that $|\rho
\mathbf{y}^{m^0,\rho}(v_0)|\leq \rho K_y=K_f$. Hence, by a standard
diagonal procedure, there exists a sequence, denoted by
$\{\rho_n\}_{n\geq 1}$, such that, for $v$ in a dense subset of
$\mathbb{R}^d$,
$$\lim_{\rho_n\rightarrow 0}\rho_n\mathbf{y}^{m^0,\rho_n}(v_0)=\lambda,\quad
\lim_{\rho_n\rightarrow
0}\bar{\mathbf{y}}^{i,\rho_n}(v)=\mathbf{y}^i(v),$$ for some
$\lambda\in\mathbb{R}$ and the limit function ${\mathbf{y}}^i(v)$.

Since $\bar{\mathbf{y}}^{i,\rho}(\cdot)$ is Lipschitz continuous
uniformly in $\rho$, the limit function $\mathbf{y}^i(\cdot)$ can be
further extended to a Lipschitz continuous function defined for all
$v\in\mathbb{R}^d$, i.e. for $v\in\mathbb{R}^d$,
$$\lim_{\rho_n\rightarrow 0}\bar{\mathbf{y}}^{i,\rho_n}(v)=\mathbf{y}^i(v).$$
Thus, for the infinite horizon BSDE system (\ref{perturbed_BSDE}),
it holds that $\lim_{\rho_n\rightarrow
0}\bar{Y}_t^{i,\rho_n,v}=\mathbf{y}^{i}(V_t^v)$ and
$\lim_{\rho_n\rightarrow 0}\rho_n \bar{Y}_t^{i,\rho_n,v}=0$.

As a result, by defining the processes
$\mathcal{Y}^{i,v}_t:=\mathbf{y}^{i}(V_t^v)$, for $t\geq 0$, $i\in
I$ and $v\in\mathbb{R}^d$, it is standard to show that (see
\cite{HU1} and \cite{HU2}) there exist a limit function $\mathbf{z}^i(\cdot)$ such that $Z^{i,\rho_n,v}$ converges to
$\mathcal{Z}^{i,v}:=\mathbf{z}^i(V_t^v)\in \mathcal{L}^2$ as $\rho_n\to 0$, and
$\left((\mathcal{Y}^{i,v},\mathcal{Z}^{i,v})_{i\in
I},\lambda\right)$ solve the ergodic BSDE system
\begin{equation}\label{ergodic_BSDE_system}
d\mathcal{Y}_t^{i,v}=-f^i(V_t^{v},\mathcal{Z}_t^{i,v})dt-\sum_{k\in
I}q^{ik}(e^{\mathcal{Y}_t^{k,v}-\mathcal{Y}_t^{i,v}}-1)dt+\lambda dt
+(\mathcal{Z}_t^{i,v})^{tr}dW_t,
\end{equation}
for $t\geq 0$, $i\in I$ and $v\in\mathbb{R}^d$.

The main result of this section is  the following existence and uniqueness of
the solution to the ergodic BSDE system (\ref{ergodic_BSDE_system}). Clearly,  ergodic BSDE~(\ref{ergodic_BSDE_system}) admits multiple ( possibly non-Markovian) solutions.  It is more reasonable to  consider the uniqueness of functions rather than processes (see Remark 4.7 in \cite{HU2}). This explains why we are only concerned with Markovian solutions of (\ref{ergodic_BSDE_system}) in the rest of the paper.

\begin{theorem}\label{theorem_ergodic_BSDE_system} Suppose that Assumptions
\ref{assumption_11}-\ref{assumption_15} are satisfied. Then, there
exists a unique Markovian solution
$((\mathcal{Y}^{i,v}_t,\mathcal{Z}^{i,v}_t)_{i\in
I},\lambda)=((\mathbf{y}^i(V_t^v),\mathbf{z}^i(V_t^v))_{i\in I},\lambda)$, $t\geq 0$, to the ergodic BSDE system
(\ref{ergodic_BSDE_system}), such that the  functions $(\mathbf{y}^i(\cdot),\mathbf{z}^i(\cdot))$ satisfy
\begin{align}
|\mathbf{y}^{i}(v)|&\leq
C_y(1+|v|),\label{estmate_y_1}\\
|\mathbf{z}^{i}(v)|&\leq
K_z=\frac{C_v}{C_{\eta}-C_v}\label{estimate_z_1},\\
|\mathbf{y}^{i}(v)-\mathbf{y}^{j}(v)|&\leq
\frac{1}{q^{\min}}\left(K_f+\frac{C_vC_{\eta}C_z}{(C_{\eta}-C_v)^2}\right)\label{estimate_y_2},
\end{align}
for some constant $C_y>0$, where all other constants are given
in Lemma \ref{lemma_difference}. The function $\mathbf{y}^i(\cdot)$
is unique up to an additive constant and, without loss of
generality, it is set that $\mathbf{y}^i(0)=0$.
\end{theorem}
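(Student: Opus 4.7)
The existence part is essentially already achieved by the perturbation argument spelled out in the paragraphs just preceding the theorem: it remains (i) to verify that the sub-sequential limits produced there satisfy the three estimates (\ref{estmate_y_1})--(\ref{estimate_y_2}), and (ii) to prove uniqueness of Markovian solutions modulo the additive-constant normalization. I would organize the proof into an existence/estimates step and a separate uniqueness step; the latter is the main obstacle.

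\emph{Existence and estimates.} For each $\rho>0$, Theorem \ref{theorem_infhorizon_BSDE_system} produces a bounded Markovian solution $(Y^{i,\rho,v},Z^{i,\rho,v})_{i\in I}=(\mathbf{y}^{i,\rho}(V_t^v),\mathbf{z}^{i,\rho}(V_t^v))_{i\in I}$ of (\ref{infhorizon_BSDE_system}). The Lipschitz estimate (\ref{Lip_estimate}) shows that $\mathbf{y}^{i,\rho}$ is uniformly (in $\rho$) $K_z$-Lipschitz, Lemma \ref{lemma_difference} controls $\mathbf{y}^{i,\rho}(v_0)-\mathbf{y}^{m^0,\rho}(v_0)$ uniformly in $\rho$, and (\ref{estimate_yz}) gives $|\rho\,\mathbf{y}^{m^0,\rho}(v_0)|\le K_f$. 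Invoking Arzel\`a--Ascoli together with a diagonal extraction, as already carried out in the discussion before the theorem, yields a sequence $\rho_n\downarrow 0$ along which $\bar{\mathbf{y}}^{i,\rho_n}\to \mathbf{y}^i$ locally uniformly and $\rho_n\mathbf{y}^{m^0,\rho_n}(v_0)\to\lambda$. Following the standard passage-to-the-limit scheme of \cite{HU1,HU2} one identifies an $\mathcal{L}^2$-limit $\mathbf{z}^i$ of $Z^{i,\rho_n,v}$ and verifies that $((\mathbf{y}^i(V_t^v),\mathbf{z}^i(V_t^v))_{i\in I},\lambda)$ solves (\ref{ergodic_BSDE_system}). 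The three pointwise estimates then fall out by passing to the limit along $\rho_n$ in, respectively, (\ref{estimate_y_rho}), the a priori bound (\ref{aprior_estimate}), and (\ref{diff_estimate_0}).

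\emph{Uniqueness -- the main obstacle.} Let $((\mathbf{y}^i,\mathbf{z}^i)_{i\in I},\lambda)$ and $((\tilde{\mathbf{y}}^i,\tilde{\mathbf{z}}^i)_{i\in I},\tilde\lambda)$ be two Markovian solutions with the stated regularity. A direct Girsanov transformation that linearises the quadratic growth in $z$ would introduce a distinct equivalent measure for each index $i\in I$, and the usual comparison across components would fail. My plan is to enlarge the probability space to carry an independent continuous-time Markov chain $\alpha=(\alpha_t)_{t\ge 0}$ with rate matrix $\mathcal{Q}$, and to aggregate the vector-valued system into a scalar equation by setting $\mathcal{Y}_t:=\mathbf{y}^{\alpha_t}(V_t^v)$ and $\mathcal{Z}_t:=\mathbf{z}^{\alpha_t}(V_t^v)$. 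An It\^o formula for processes with jumps shows that the coupling term $\sum_{k\in I}q^{\alpha_t k}(e^{\mathbf{y}^k-\mathbf{y}^{\alpha_t}}-1)$ in (\ref{ergodic_BSDE_system}) is exactly the compensator needed so that $\mathcal{Y}$ satisfies a scalar ergodic BSDE on the enlarged filtration, driven jointly by $W$ and by the compensated jump martingale of $\alpha$. A Girsanov transformation that tilts simultaneously the drift of $W$ (through a linearization in $\mathbf{z}^i-\tilde{\mathbf{z}}^i$) and the compensator of $\alpha$ (through the exponential weights $e^{\tilde{\mathbf{y}}^k-\tilde{\mathbf{y}}^{\alpha_t}}$) then reduces the comparison of $\mathcal{Y}$ and $\tilde{\mathcal{Y}}$ to a linear scalar ergodic BSDE, whose uniqueness proceeds along the lines of \cite[Remark 4.7]{HU2} and \cite{HU1}. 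This forces $\lambda=\tilde\lambda$ and $\mathbf{y}^i-\tilde{\mathbf{y}}^i$ to be the same constant for every $i\in I$; the normalization $\mathbf{y}^i(0)=0$ then pins that constant to zero, whence also $\mathbf{z}^i=\tilde{\mathbf{z}}^i$. The careful implementation of this joint Girsanov transformation on the enlarged filtration is the delicate point and I would defer its details to Appendix \ref{Appendix_B}.
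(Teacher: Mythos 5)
Your proposal follows essentially the same route as the paper: existence and the three estimates are obtained exactly as you describe by passing to the limit $\rho_n\downarrow 0$ in the perturbed system and in (\ref{estimate_y_rho}), (\ref{estimate_yz}), (\ref{diff_estimate_0}), while uniqueness is proved, as in Appendix \ref{Appendix_B}, by aggregating the system into a scalar ergodic BSDE $\mathcal{Y}_t=\mathbf{y}^{\alpha_t}(V_t^v)$ driven by $W$ and an exogenous Markov chain $\alpha$ with rate matrix $\mathcal{Q}$, and then applying a joint Girsanov transformation (linearizing in $\mathbf{z}$ for the Brownian part and in the map $x\mapsto e^{x}-1-x$ for the jump part, both bounded thanks to (\ref{estimate_z_1}) and (\ref{estimate_y_2})) combined with the coupling estimate to force $\delta\lambda=0$ and $\delta\mathcal{Y}_0^v=0$. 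This matches the paper's argument in both structure and the key technical device.
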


\begin{proof} We have already shown the existence of a Markovian solution to
(\ref{ergodic_BSDE_system}). The estimates (\ref{estmate_y_1}), (\ref{estmate_y_1}),  and (\ref{estimate_y_2})
follow, respectively, from
(\ref{estimate_y_rho}),  (\ref{estimate_yz}), and (\ref{diff_estimate_0}) by sending
$\rho\rightarrow 0$. Hence, it remains  to show the uniqueness. The
idea is to convert the ergodic BSDE system
(\ref{ergodic_BSDE_system}) to a scalar-valued ergodic BSDE driven
by the Brownian motion $W$ and an exogenously given Markov chain
$\alpha$. We postpone this part of the proof to Appendix
\ref{Appendix_B} after we introduce the Markov chain $\alpha$ in the
next section.
\end{proof}

\begin{remark} The conditions (\ref{estmate_y_1})-(\ref{estimate_y_2}) are essential for the uniqueness of the Markovian solution to (\ref{ergodic_BSDE_system}). We provide examples of Markovian solutions which do not satisfy them. Assume that $d=m^0=1$, $\eta(v)=-\frac12 v$, and $\kappa=1$. Then,  (\ref{ergodic_BSDE_system}) reduces to
$$d\mathcal{Y}_t^v=-f(V_t^v,\mathcal{Z}_t^v)dt+\lambda dt+\mathcal{Z}^vdW_t,$$
with $dV_t^v=-\frac12V_t^vdt+dW_t$ and $V_0^v=v$.

As the first example, we consider $f(v,z)=\frac{v}{2}e^{-v^2/2}$.  Assumptions 1-4 are then all satisfied. The unique Markovian solution satisfying (\ref{estmate_y_1})-(\ref{estimate_y_2}) is given by $(\mathcal{Y}^{v}_t,\mathcal{Z}^{v}_t,\lambda)=(\mathbf{y}(V_t^v),\mathbf{z}(V_t^v),0)$ with $$(\mathbf{y}(v),\mathbf{z}(v))=\left(\frac12\int_{-\infty}^{v}e^{-\frac{y^2}{2}}dy,\frac12e^{-\frac{v^2}{2}}\right).$$
It is easy to check that both triplets  $$
\left(\frac12\int_{0}^{v}[e^{-\frac{y^2}{2}}-e^{\frac{y^2}{2}}]dy,\frac12[e^{-\frac{v^2}{2}}-e^{\frac{v^2}{2}}],0\right)$$
and $$\left(\int_0^{v}e^{\frac{y^2}{2}}[\frac12e^{-y^2}+N(y)-1]dy,e^{\frac{v^2}{2}}[\frac12e^{-v^2}+N(v)-1],\frac{1}{2\sqrt{2\pi}}\right),$$ where  $N(x):=\frac{1}{\sqrt{2\pi}}\int_{-\infty}^{x}e^{-\frac{y^2}{2}}dy$, also satisfy (\ref{ergodic_BSDE_system}). However, neither of them satisfies the conditions (\ref{estmate_y_1}) and (\ref{estimate_z_1}).

As the second example, we consider $f(v,z)=\frac{|v|}{2}e^{-v^2/2}$. The unique Markovian solution satisfying (\ref{estmate_y_1})-(\ref{estimate_y_2}) is given by the triplet $(\mathbf{y}(\cdot),\mathbf{z}(\cdot),\frac{1}{2\sqrt{2\pi}})$ with
\begin{align*}
\mathbf{y}(v)&=\chi_{\{v\geq 0\}}\int_0^{v}e^{\frac{y^2}{2}}[\frac12e^{-y^2}+N(y)-1]dy
+\chi_{\{v< 0\}}\int_0^{v}e^{\frac{y^2}{2}}[-\frac12e^{-y^2}+N(y)]dy;\\
\mathbf{z}(v)&=\chi_{\{v\geq 0\}}e^{\frac{v^2}{2}}[\frac12e^{-v^2}+N(v)-1]+\chi_{\{v< 0\}}e^{\frac{v^2}{2}}[-\frac12e^{-v^2}+N(v)].
\end{align*}
However, it is easy to check that the triplet $(\bar{\mathbf{y}}(\cdot),\bar{\mathbf{z}}(\cdot),0)$ with
\begin{align*}
\bar{\mathbf{y}}(v)&=\chi_{\{v\geq 0\}}\int_0^{v}(\frac12{e^{\frac{-y^2}{2}}}-e^{\frac{y^2}{2}})dy
-\chi_{\{v<0\}}\int_0^{v}\frac12{e^{-\frac{y^2}{2}}}dy;\\
\bar{\mathbf{z}}(v)&=\chi_{\{v\geq 0\}}(\frac12{e^{\frac{-v^2}{2}}}-e^{\frac{v^2}{2}})
-\chi_{\{v<0\}}\frac12{e^{-\frac{v^2}{2}}},
\end{align*}
also satisfies (\ref{ergodic_BSDE_system}),  but fails to satisfy the conditions (\ref{estmate_y_1}) and (\ref{estimate_z_1}).
\end{remark}

\section{Application to regime switching forward
performance processes}\label{section:regime_switching_forward}


Let $(\Omega,\mathcal{F},\mathbb{F},\mathbb{P})$ be the filtered
probability space introduced in section \ref{section:BSDE}. Assume
the probability space also supports a Markov chain $\alpha$ with its
augmented filtration $\mathbb{H}=\{\mathcal{H}_t\}_{t\geq 0}$
independent of the Brownian filtration $\mathbb{F}$. The Markov
chain $\alpha$ has the transition rate matrix $\mathcal{Q}$ as
specified in Assumption \ref{assumption_12}, and admits the
representation
$$d\alpha_t=\sum_{k,k'\in
I}(k-k')\chi_{\{\alpha_{t-}=k'\}}d{N}_t^{k'k},$$
where $(N^{k^{\prime}k})_{k^{\prime},k\in I}$ are independent
Poisson processes each with intensity $q^{k^{\prime}k}$ (see chapter 9.1.2 in \cite{Bremaud}). Let $T_0=0$ and $T_1,T_2,\dots$ be the jump times of the Markov
chain $\alpha$, and $(\alpha^{j})_{j\geq 1}$ be a sequence of
$\mathcal{H}_{T_{j}}$-measurable random variables representing the
position of $\alpha$ in the time interval $[T_{j-1},T_{j})$. Hence, $\alpha_t=\sum_{j\geq 1}\alpha^{j-1}\chi_{[T_{j-1},T_{j})}(t)$.
Without
loss of generality, assume that $\alpha^0=i\in I$. Denote the
smallest filtration generated by $\mathbb{F}$ and $\mathbb{H}$ as
$\mathbb{G}=\{\mathcal{G}_t\}_{t\geq 0}$, i.e.
$\mathcal{G}_t=\mathcal{F}_t\vee\mathcal{H}_t$.

 We consider a market consisting of a risk-free bond offering zero interest
rate and $n$ risky assets, with $n\leq d$. The prices of the $n$
risky assets are driven by the Markov chain $\alpha$ and a
$d$-dimensional stochastic factor process $V$, which satisfies
Assumption \ref{assumption_13}.

Each state $i\in I$ of the Markov chain $\alpha$ represents a market
regime, and in regime $i$, the corresponding market price of risk at
time $t$ is $\theta^i(V_t)$. The $n$-dimensional price process
$S=(S^1,\dots,S^n)^{tr}$ of the risky assets follows
\begin{equation}
dS_t=\text{diag}(S_t)\sigma(V_{t})(\theta^{\alpha_{t-}}(V_t)dt+dW_t)\text{,}
\label{stock}
\end{equation}%
where $\sigma(V_t)\in\mathbb{R}^{n\times d}_+$ is the volatility
matrix of the risky assets at time $t$, and
$\text{diag}(S_t)=\{\text{diag}(S_t)_{kj}\}_{1\leq k,j\leq n}$, with
$\text{diag}(S_t)_{kk}=S^k_{t}$ and $\text{diag}(S_t)_{kj}=0$ for
$k\neq j$, represents the prices of the risky assets at time $t$.

\begin{assumption}\label{assumption_14} The market coefficients of
the $n$ risky assets satisfy that

(i) $\sigma(v)$ is uniformly bounded in $v\in\mathbb{R}^d$ and has full rank
  $n$;

 (ii) for $i\in I$, $\theta^{i}(v)$ is uniformly bounded and Lipschitz continous
  in $v\in\mathbb{R}^d$.
\end{assumption}

\begin{remark}\label{remark_4}
{When $n<d$, the financial market is  incomplete. A typical example is $n=1$ and $d=2$ for the following regime switching stochastic volatility model
\begin{align*}
dS_{t}&=S_{t}\sigma(V_t)(\theta^{\alpha_{t-}}(V_t)dt+dW_{t}), \\
dV_{t}&=\eta \left( V_{t}\right) dt+\kappa\, dW_{t}.
\end{align*}%
Here,  the function $\sigma (\cdot)$ takes values in a two-dimensional row vector  space, all the $m^0+1$ functions $\theta ^i(\cdot), i\in I, $   and $\eta(\cdot)$  take values in a two-dimensional column vector space, and the constant matrix $\kappa \in \mathbb{R}^{2\times 2}$
is positive definite and  normalized to $|\kappa |=1$.}
\end{remark}

\subsection{Trading strategies}
In this market environment, an investor trades dynamically among the
risk-free bond and the risky assets. Let $\tilde{\pi}=(\tilde{\pi}%
^{1},\dots ,\tilde{\pi}^{n})^{tr}$ denote the (discounted by the
bond) proportions of her wealth in the risky assets. They are taken
to be self-financing and, thus, the (discounted by the bond) wealth
process satisfies
\begin{equation*}
dX_{t}(\tilde{\pi})=X_t(\tilde{\pi})\tilde{\pi}_{t}^{tr}\sigma
(V_{t})\left( \theta^{\alpha_{t-}}(V_{t})dt+dW_{t}\right).
\end{equation*}%
As in \cite{LZ}, we work with the trading strategies rescaled by the
volatility matrix, namely, $ \pi
_{t}^{tr}:=\tilde{\pi}_{t}^{tr}\sigma (V_{t}). $ Then, the wealth
process in regime $i$ satisfies
\begin{equation}
dX_{t}(\pi)=X_t(\pi)\pi _{t}^{tr}\left( \theta^{\alpha_{t-}}
(V_{t})dt+dW_{t}\right) . \label{wealth}
\end{equation}%

For any $t\geq 0$, we denote by $\mathcal{A}^{\mathbb{G}}_{[0,t]}$
the set of admissible trading strategies in $[0,t],$ defined as
\begin{eqnarray*}
\mathcal{A}_{[0,t]}^{\mathbb{G}}:=&&\left \{ \pi
_{s}=\pi_0^i\chi_{\{0\}}(s)+\sum_{j\geq
1}\pi_s^{\alpha^{j-1}}\chi_{(T_{j-1},T_{j}]}(s),\
s\in[0,t]:\ \pi_s^{j}\in\Pi^j,\right.\\
&& \left.\pi^j\ \text{is}\ \mathbb{F}\text{-progressively\
measurable and}\int_0^{t}|\pi_s^j|^{2}ds<\infty, \mathbb{P}\text{-a.s.}\right\},
\end{eqnarray*}%
where $\Pi^j$, $j\in I$, are closed and convex subsets in
$\mathbb{R}^{d}$. So $\Pi^j$ models the investor's trading
constraints, and the investor will adjust her trading constraint
sets according to different
market regimes. 

For $0\leq t\leq s$, the set $\mathcal{A}^{\mathbb{G}}_{[t,s]}$ is
defined in a similar way, and the set of admissible trading
strategies for {all} $t\geq 0$ is, in turn, defined as
$\mathcal{A}^{\mathbb{G}}=\cup _{t\geq
0}\mathcal{A}^{\mathbb{G}}_{[0,t]}$.

For the regime switching stochastic volatility model in Remark \ref{remark_4}, a typical choice of the trading constraint set $\Pi^j$ is $\Pi^j=\mathbb{R}\times\{0\}$ for $j\in I$.

\subsection{Regime switching forward performance processes}

The investor uses a forward criterion to measure the performance for
her admissible trading strategies. We introduce the definition of
regime switching forward performance processes associated with this
market.

\begin{definition}\label{def:forward_performance}
A family of stochastic processes $\left(U^i(x,t)\right)_{i\in I}$,
for $\left( x,t\right) \in \mathbb{R}_+^2$, is a regime switching
forward performance process if the following conditions are
satisfied:

(i) For each $i\in I$ and $x\in \mathbb{R}_+$, $t\mapsto U^i\left( x,t\right) $ is $\mathbb{F}$%
-progressively measurable;

(ii) For each $i\in I$ and $t\geq 0$, the mapping $x\mapsto
U^i(x,t)$ is strictly increasing and strictly concave;

(iii) Define the process
\begin{equation}\label{regime_switching_forward}
U(x,t):=\sum_{j\geq
1}U^{\alpha^{j-1}}(x,t)\chi_{[T_{j-1},T_{j})}(t).
\end{equation}
Then, for all $\pi\in \mathcal{A}^{\mathbb{G}}$ and $0\leq t\leq s$,
\begin{equation}\label{supermartingale}
U( X_{t}({\pi}),t) \geq \mathbb{E}\left[ U(X_{s}({\pi}),s)|%
\mathcal{G}_{t}\right] ,
\end{equation}%
and there exists an optimal $\pi^{\ast }\in
\mathcal{A}^{\mathbb{G}}$ such that
\begin{equation}\label{martingale}
U( X_{t}({\pi ^{\ast }}),t) =\mathbb{E}\left[ U(X_{s}({\pi ^{\ast
}}),s)|\mathcal{G}_{t}\right] ,
\end{equation}%
with $X({\pi}),X(\pi^{\ast})$ solving (\ref{wealth}).
\end{definition}

The above (super)martingale conditions (\ref{supermartingale}) and (\ref{martingale}) can be restated as follows: For
$j\geq 1$, on the event $\{T_{j-1}\leq t< T_{j}\}$,
\begin{align*}
U(x,t)=U^{\alpha^{j-1}}(x,t)=\esssup_{\pi\in\mathcal{A}^{\mathbb{G}}_{[t,s]}}
&\ \mathbb{E}\left[U^{\alpha^{j-1}}(X_s(\pi),s)\chi_{\{s<T_{j}\}}\right.\\
&\left.+ U^{\alpha^{j}}(X_{T_{j}}(\pi),T_{j})\chi_{\{s\geq
T_{j}\}}|\mathcal{F}_t,X_t=x\right],
\end{align*}
and on $\{t=T_{j}\}$, $U(x,t)$ has a jump with size
$$U(x,T_{j})-U(x,T_{j}-)=U^{\alpha^{j}}(x,T_j)-U^{\alpha^{j-1}}(x,T_j-).$$
Hence, we have the following decomposition formula for $U(x,t)$
(recall that $\alpha^0=i$):
\begin{align}\label{decompose_formula}
U(x,t)=&\ U(x,0)+\sum_{j\geq 1}\left[U(x,t\wedge T_{j}-)-U(x,t\wedge
T_{j-1})\right]\notag\\
&\ +\sum_{j\geq 1}\left[U(x,t\wedge T_j)-U(x,t\wedge
T_{j}-)\right]\notag\\
=&\ U^{i}(x,0)+\sum_{j\geq 1}\left[U^{\alpha^{j-1}}(x,t\wedge
T_{j}-)-U^{\alpha^{j-1}}(x,t\wedge T_{j-1})\right]\notag\\
&\ +\sum_{j\geq 1}\left[U^{\alpha^j}(x,
T_j)-U^{\alpha^{j-1}}(x,T_{j}-)\right]\chi_{\{T_j\leq t\}}.
\end{align}
The first sum on the right hand side of (\ref{decompose_formula}) is
the continuous component of $U(x,t)$, while the second sum is the
jump component of $U(x,t)$.

Here, we focus on \emph{Markovian regime switching forward performance processes in power form}, namely, the processes that are \emph{deterministic} functions of the stochastic factor process $V$,
$$U^i(x,t)=\frac{x^{\delta}}{\delta}e^{K^{i}(V_t,t)}$$
for $\delta\in (0, 1)$ and appropriate function(s) $K^i:\mathbb{R}^d\times\mathbb{R}_+\rightarrow\mathbb{R}$.

\subsection{Representation via system of ergodic BSDE}

We now characterize
Markovian regime switching forward performance processes via the ergodic BSDE
system (\ref{ergodic_BSDE_system}) introduced in Section 3. For
$i\in I$ and $(v,z)\in\mathbb{R}^d\times\mathbb{R}^d$, we consider
the driver
\begin{equation}\label{driver_BSDE}
f^i(v,z)=\frac12\delta(\delta-1)\text{dist}^2\left(\Pi,\frac{z+\theta^i(v)}{1-\delta}\right)
+\frac{\delta}{2(1-\delta)}|z+\theta^i(v)|^2+\frac{|z|^2}{2}.
\end{equation}
It is easy to check that $f^i$ satisfies Assumption
\ref{assumption_11}.  Then,  from Theorem
\ref{theorem_ergodic_BSDE_system}, the ergodic BSDE system
(\ref{ergodic_BSDE_system}) admits a unique Markovian solution
$\left((\mathcal{Y}^{i},\mathcal{Z}^{i})_{i\in I},\lambda\right)$
satisfying (\ref{estmate_y_1}), (\ref{estimate_z_1}) and (\ref{estimate_y_2}).

\begin{theorem}\label{theorem_ergodic_representation} Suppose that Assumptions \ref{assumption_11}-\ref{assumption_14} are satisfied. Let
$((\mathcal{Y}_t^{i},\mathcal{Z}_t^{i})_{i\in I},\lambda)=((\mathbf{y}^i(V_t^v),\mathbf{z}^i(V_t^v))_{i\in I},\lambda)$, $t\geq 0$, be
the unique Markovian solution of the ergodic BSDE system
(\ref{ergodic_BSDE_system}) with driver $f^i$ as in
(\ref{driver_BSDE}), and satisfy (\ref{estmate_y_1}),
(\ref{estimate_z_1}) and (\ref{estimate_y_2}). Then,
\begin{equation}\label{representation_2}
U^i(x,t)=\frac{x^{\delta}}{\delta}e^{\mathcal{Y}_t^{i}-\lambda t}=\frac{x^{\delta}}{\delta}e^{\mathbf{y}^i(V_t^v)-\lambda t},\
i\in I,
\end{equation}
form a Markovian regime switching forward performance process, and in each
regime $i$,
\begin{equation}\label{optimal_strategy}
\pi_t^{i,\ast}=\text{Proj}_{\Pi^i}\left(\frac{\mathcal{Z}_t^{i}+\theta^{i}(V_t)}{1-\delta}\right)
\end{equation}
is the associated optimal trading strategy in this regime.
\end{theorem}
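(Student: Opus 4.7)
The proof plan is to verify each of the conditions in Definition~\ref{def:forward_performance} for the candidate $U^i(x,t)=\frac{x^\delta}{\delta}e^{\mathbf{y}^i(V_t^v)-\lambda t}$. Conditions (i) and (ii) are essentially free: the $\mathbb{F}$-progressive measurability follows from the Markovian representation $\mathcal{Y}_t^i=\mathbf{y}^i(V_t^v)$, while strict monotonicity and concavity in $x$ follow from $\delta\in(0,1)$. The entire work is therefore to establish the supermartingale/martingale inequality (\ref{supermartingale})-(\ref{martingale}). My plan is to compute the $\mathbb{G}$-semimartingale decomposition of the composite process $U(X_t(\pi),t)$ and read off a drift that is nonpositive for every admissible $\pi$ and vanishes along $\pi^\ast$ in (\ref{optimal_strategy}).

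First I would work between jump times of $\alpha$, where $\alpha_t\equiv i$ is constant, and apply It\^o's formula to $U^i(X_t,t)=\phi(X_t)\Psi_t^i$ with $\phi(x)=x^\delta/\delta$ and $\Psi_t^i=e^{\mathcal{Y}_t^i-\lambda t}$. Using (\ref{wealth}) for $\phi(X_t)$ and the ergodic BSDE system (\ref{ergodic_BSDE_system}) for $\Psi_t^i$ (so that the $\lambda\,dt$ is killed by $-\lambda\,dt$ from the time prefactor), the continuous drift comes out to
\[
U^i(X_t,t)\left[\delta\pi_t^{tr}\theta^i(V_t)+\tfrac12\delta(\delta-1)|\pi_t|^2+\delta\pi_t^{tr}\mathcal{Z}_t^i+\tfrac12|\mathcal{Z}_t^i|^2-f^i(V_t,\mathcal{Z}_t^i)-\sum_{k\in I}q^{ik}(e^{\mathcal{Y}_t^k-\mathcal{Y}_t^i}-1)\right].
\]
Next, I account for the jumps: at $T_j$ the factor $X$ and $V$ are continuous, so $U$ jumps from $U^i(X_{T_j},T_j)$ to $U^k(X_{T_j},T_j)=U^i(X_{T_j},T_j)e^{\mathcal{Y}_{T_j}^k-\mathcal{Y}_{T_j}^i}$ with $\mathbb{G}$-intensity $q^{ik}$. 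Compensating the jump measure of $\alpha$ produces exactly the term $+\sum_k q^{ik}U^i(X_t,t)(e^{\mathcal{Y}_t^k-\mathcal{Y}_t^i}-1)\,dt$, which cancels the coupling term inherited from the BSDE. Hence the total $\mathbb{G}$-drift of $U(X_t(\pi),t)$ collapses to
\[
U(X_t,t)\left[\delta\pi_t^{tr}\theta^{\alpha_{t-}}(V_t)-\tfrac12\delta(1-\delta)|\pi_t|^2+\delta\pi_t^{tr}\mathcal{Z}_t^{\alpha_{t-}}+\tfrac12|\mathcal{Z}_t^{\alpha_{t-}}|^2-f^{\alpha_{t-}}(V_t,\mathcal{Z}_t^{\alpha_{t-}})\right].
\]
Completing the square in $\pi_t$ over the closed convex set $\Pi^{\alpha_{t-}}$ and inserting the explicit driver (\ref{driver_BSDE}), this bracket is exactly $-\tfrac12\delta(1-\delta)\bigl|\pi_t-\mathrm{Proj}_{\Pi^{\alpha_{t-}}}(\frac{\mathcal{Z}_t^{\alpha_{t-}}+\theta^{\alpha_{t-}}(V_t)}{1-\delta})\bigr|^2\le 0$, with equality precisely at $\pi_t^\ast$ given by (\ref{optimal_strategy}).

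The last step, and the technical crux, is upgrading the resulting local supermartingale/martingale into a true one so that (\ref{supermartingale})-(\ref{martingale}) hold in expectation. For this I would localize along a sequence of stopping times $\tau_n\uparrow\infty$ reducing the stochastic integral, then pass to the limit: for general $\pi\in\mathcal{A}^{\mathbb{G}}$ Fatou's lemma applied to the positive process $U(X_t(\pi),t)$ directly yields the supermartingale property, while for $\pi^\ast$ one needs uniform integrability. The latter follows from the uniform bound $|\mathcal{Z}^i|\le K_z$ from (\ref{estimate_z_1}), which places the relevant stochastic exponentials in the BMO class; combined with the sublinear growth (\ref{estmate_y_1}) of $\mathbf{y}^i$, the bound (\ref{estimate_y_2}) on regime differences, and the bounded market-price-of-risk Assumption~\ref{assumption_14}, a standard argument (as in~\cite{LZ,CHLZ}) shows $\{U(X_{\tau_n}^{\pi^\ast},\tau_n)\}_n$ is uniformly integrable, giving the martingale identity (\ref{martingale}) for $\pi^\ast$. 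The main obstacle I foresee is precisely this uniform integrability for $\pi^\ast$ on unbounded constraint sets $\Pi^i$, since $\pi^\ast$ need not be bounded; the BMO bound on $\mathcal{Z}^i$ together with Kazamaki's criterion is the tool I would rely on to push this through.
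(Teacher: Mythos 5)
Your proposal follows essentially the same route as the paper: It\^o between the jump times of $\alpha$, compensation of the regime jumps so that the coupling term $\sum_k q^{ik}(e^{\mathcal{Y}^k-\mathcal{Y}^i}-1)$ cancels, identification of the residual drift as $f^{\alpha_{s-}}(V_s,\mathcal{Z}_s^{\alpha_{s-}};\pi_s)-f^{\alpha_{s-}}(V_s,\mathcal{Z}_s^{\alpha_{s-}})\le 0$ with equality at $\pi^\ast$, and then an integrability argument (the paper packages the dynamics as a product of two stochastic exponentials, which is only a cosmetic difference). Two small corrections. First, after completing the square the drift is $-\tfrac{\delta(1-\delta)}{2}\bigl[\,|\pi_t-a|^2-\mathrm{dist}^2(\Pi^{\alpha_{t-}},a)\bigr]$ with $a=\frac{\mathcal{Z}_t^{\alpha_{t-}}+\theta^{\alpha_{t-}}(V_t)}{1-\delta}$; for a general closed convex set this is bounded above by $-\tfrac{\delta(1-\delta)}{2}|\pi_t-\mathrm{Proj}_{\Pi^{\alpha_{t-}}}(a)|^2$ but is not \emph{equal} to it (the Pythagorean identity you invoke holds only for affine $\Pi$) --- the sign and the argmax characterization survive, so nothing breaks. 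Second, the ``main obstacle'' you foresee does not exist: since $\mathcal{Z}^i$ is bounded by (\ref{estimate_z_1}), $\theta^i$ is bounded by Assumption \ref{assumption_14}, and the projection onto a closed convex set is nonexpansive, $\pi^\ast$ is automatically bounded even when $\Pi^i$ is unbounded; this is exactly how the paper disposes of the martingale property for $\pi^\ast$ (bounded integrand, hence BMO, hence uniformly integrable exponential), together with the observation that the jump exponential is a nonnegative \emph{bounded} martingale thanks to (\ref{estimate_y_2}).
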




\begin{remark}\label{remark}
The boundedness conditions (\ref{estimate_z_1}) and (\ref{estimate_y_2}) are crucial for the verification of the (super)martingale conditions of $U(x,t)$ (see step 3 in section \ref{section:proof}), while the linear growth condition (\ref{estmate_y_1}) is used to connect forward performance processes and classical utility maximization (see Proposition \ref{propositionLambda}).

In particular, if there is only a single regime, i.e. $m^0=1$, then the ergodic BSDE system
(\ref{ergodic_BSDE_system}) reduces to
$$d\mathcal{Y}_t^1=-f^1(V_t,\mathcal{Z}_t^1)dt+\lambda dt+(\mathcal{Z}_t^1)^{tr}dW_t.$$
In this case, the Markovian forward performance process has the representation
$$U^1(x,t)=\frac{x^{\delta}}{\delta}e^{\mathcal{Y}_t^1-\lambda t}=\frac{x^{\delta}}{\delta}e^{\mathbf{y}^1(V_t^v)-\lambda t},$$
which is precisely the representation formula established in
\cite[Theorem 3.2 ]{LZ}.
\end{remark}

To prove Theorem \ref{theorem_ergodic_representation}, we need
It\^o's formula for the Markov chain $\alpha$. We recall it in
the following lemma, which will be frequently used in the rest of
the paper. Its proof is a straightforward extension of
\cite{Bremaud} and \cite{Zhang1} and is thus omitted here.

\begin{lemma}\label{lemma_Ito}
For $i\in I$, let $F^i_t$, $t\geq 0$, be a family of
$\mathbb{F}$-progressively measurable and continuous stochastic
processes. Then,
\begin{align*}
&\sum_{j\geq
1}\left[F^{\alpha_{T_j}}_{T_j}-F^{\alpha_{T_{j}-}}_{T_{j}-}\right]\chi_{\{T_{j}\leq
t\}}\\
=&\int_{0}^{t}\sum_{k\in
I}q^{\alpha_{s-}k}[F^k_s-F^{\alpha_{s-}}_s]ds+\int_{0}^{t}\sum_{k,k^{\prime}\in
I}[F_s^k-F_s^{k'}]\chi_{\{\alpha_{s-}=k^{\prime}\}}d\tilde{N}_s^{k^{\prime}k},
\end{align*}
where
$\tilde{N}_t^{k^{\prime}k}=N_t^{k^{\prime}k}-q^{k^{\prime}k}t$,
$t\geq 0$, are the compensated Poisson martingales
under the filtration $\mathbb{G}=\mathbb{F}\vee\mathbb{H}$.
\end{lemma}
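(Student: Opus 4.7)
The plan is to verify the three conditions of Definition~\ref{def:forward_performance} for the family $U^i(x,t) = \frac{x^\delta}{\delta} e^{\mathcal{Y}_t^i - \lambda t}$. Conditions (i) and (ii) are immediate: $\mathcal{Y}_t^i = \mathbf{y}^i(V_t^v)$ is continuous and $\mathbb{F}$-adapted, while $\delta \in (0,1)$ makes $x \mapsto x^\delta/\delta$ strictly increasing and concave, with the positive factor $e^{\mathcal{Y}_t^i - \lambda t}$ preserving these properties. The substance is condition (iii), which I would establish by computing the dynamics of $U(X_t(\pi), t)$ explicitly.

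Fix an admissible $\pi \in \mathcal{A}^{\mathbb{G}}$ and set $\alpha_{t-} = i$. First I would apply It\^o's formula to $e^{\mathcal{Y}_t^i - \lambda t}$ using the ergodic BSDE (\ref{ergodic_BSDE_system}), and to $X_t(\pi)^\delta$ using the wealth dynamics (\ref{wealth}). Taking their product and including the cross-variation $\delta X_t^\delta e^{\mathcal{Y}_t^i - \lambda t} \pi_t^{i,tr} \mathcal{Z}_t^i\, dt$, the continuous drift of $U(X_t(\pi), t)$ becomes $U(X_t(\pi), t)$ times
\begin{equation*}
\delta \pi_t^{i,tr}(\theta^i(V_t) + \mathcal{Z}_t^i) + \tfrac{1}{2}\delta(\delta-1)|\pi_t^i|^2 - f^i(V_t, \mathcal{Z}_t^i) + \tfrac{1}{2}|\mathcal{Z}_t^i|^2 + \sum_{k\in I} q^{ik}(1 - e^{\mathcal{Y}_t^k - \mathcal{Y}_t^i}).
\end{equation*}
Substituting the explicit form (\ref{driver_BSDE}) of $f^i$, the terms $\frac{1}{2}|\mathcal{Z}_t^i|^2$ cancel, and completing the square in $\pi_t^i$ against $\frac{\theta^i + \mathcal{Z}_t^i}{1-\delta}$ produces precisely the extra term $\frac{\delta}{2(1-\delta)}|\theta^i + \mathcal{Z}_t^i|^2$ that matches the corresponding term in $f^i$. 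This leaves
\begin{equation*}
-\tfrac{1}{2}\delta(1-\delta)\Big|\pi_t^i - \tfrac{\mathcal{Z}_t^i + \theta^i(V_t)}{1-\delta}\Big|^2 + \tfrac{1}{2}\delta(1-\delta)\,\mathrm{dist}^2\Big(\Pi^i, \tfrac{\mathcal{Z}_t^i + \theta^i(V_t)}{1-\delta}\Big) + \sum_{k\in I} q^{ik}(1 - e^{\mathcal{Y}_t^k - \mathcal{Y}_t^i}).
\end{equation*}

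Next I would handle the jumps of $U(X_t(\pi), t)$ caused by the regime switches. Since $X_t(\pi)$ is continuous, the jump at $T_j$ equals $U(X_{T_j}, T_j-)\bigl(e^{\mathcal{Y}_{T_j}^{\alpha^j} - \mathcal{Y}_{T_j}^{\alpha^{j-1}}} - 1\bigr)$. Applying Lemma~\ref{lemma_Ito} to the family $F^i_t := e^{\mathcal{Y}_t^i - \lambda t}$ (weighted by $X_t^\delta/\delta$) compensates the jumps, contributing a drift $U(X_t(\pi), t) \sum_{k\in I} q^{ik}(e^{\mathcal{Y}_t^k - \mathcal{Y}_t^i} - 1)$ plus a $\tilde N$-martingale. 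This is exactly the negative of the last sum above, so the two cancel and the total drift of $U(X_t(\pi), t)$ reduces to $-\tfrac{1}{2}\delta(1-\delta)\, U(X_t(\pi), t)\,|\pi_t^i - \pi_t^{i,\ast}|^2$, which is pointwise nonpositive and vanishes exactly at the candidate strategy (\ref{optimal_strategy}).

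Finally, I must promote the resulting local (super)martingale to a genuine (super)martingale and verify admissibility of $\pi^\ast$. Here the bounds (\ref{estimate_z_1}) on $\mathcal{Z}^i$ and (\ref{estimate_y_2}) on $\mathcal{Y}^i - \mathcal{Y}^j$ (from Theorem~\ref{theorem_ergodic_BSDE_system}) are crucial: they ensure $e^{\mathcal{Y}_t^i - \mathcal{Y}_t^j}$ is uniformly bounded (so the Poisson stochastic integrals are true martingales with bounded integrands), and they yield uniform bounds on $|\pi_t^{i,\ast}|$ so that $\pi^\ast \in \mathcal{A}^{\mathbb{G}}$ and standard BMO/exponential moment arguments apply to $X^\delta(\pi)e^{\mathcal{Y}^i}$. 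The hard part of the write-up is precisely this integrability bookkeeping: one needs that for any admissible $\pi$ the Brownian stochastic integral $\int_0^\cdot U(X_s(\pi),s) (\delta \pi_s^{\alpha_{s-}} + \mathcal{Z}_s^{\alpha_{s-}})^{tr} dW_s$ is a supermartingale, which I would obtain by Fatou's lemma applied to a localizing sequence, using the nonpositivity of the drift and the uniform boundedness of the exponential factor relative to a reference regime. Combining these yields the supermartingale property (\ref{supermartingale}) for arbitrary $\pi$ and the martingale property (\ref{martingale}) at $\pi^\ast$, completing the verification.
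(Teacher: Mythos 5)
Your proposal does not prove the statement in question. The statement is Lemma~\ref{lemma_Ito}, the It\^o/jump-decomposition formula for the Markov chain $\alpha$: it asserts that the accumulated jumps $\sum_{j\geq 1}[F^{\alpha_{T_j}}_{T_j}-F^{\alpha_{T_j-}}_{T_j-}]\chi_{\{T_j\leq t\}}$ of the process $F^{\alpha_t}_t$ decompose into a finite-variation compensator $\int_0^t\sum_k q^{\alpha_{s-}k}[F^k_s-F^{\alpha_{s-}}_s]ds$ plus a stochastic integral against the compensated Poisson martingales $\tilde N^{k'k}$. What you have written instead is a verification of the (super)martingale conditions of Definition~\ref{def:forward_performance} for the candidate $U^i(x,t)=\frac{x^\delta}{\delta}e^{\mathcal{Y}_t^i-\lambda t}$ --- that is, a proof of Theorem~\ref{theorem_ergodic_representation}. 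Worse, your argument explicitly \emph{invokes} Lemma~\ref{lemma_Ito} ("Applying Lemma~\ref{lemma_Ito} to the family $F^i_t:=e^{\mathcal{Y}_t^i-\lambda t}$\dots"), so as a proof of that lemma it is circular.

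For the record, the lemma itself has a short direct proof (the paper omits it, citing Br\'emaud and Yin--Zhang): since each jump time $T_j$ of $\alpha$ from state $k'$ to state $k$ is exactly a jump time of $N^{k'k}$, and since each $F^i$ is continuous (so $F^{\alpha_{T_j-}}_{T_j-}=F^{\alpha_{T_j-}}_{T_j}$), the left-hand side equals $\int_0^t\sum_{k,k'\in I}[F^k_s-F^{k'}_s]\chi_{\{\alpha_{s-}=k'\}}dN_s^{k'k}$; substituting $dN^{k'k}_s=d\tilde N^{k'k}_s+q^{k'k}ds$ and using $\sum_{k'}\chi_{\{\alpha_{s-}=k'\}}q^{k'k}[F^k_s-F^{k'}_s]=q^{\alpha_{s-}k}[F^k_s-F^{\alpha_{s-}}_s]$ gives the claimed identity. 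You should supply an argument of this kind rather than the verification of Theorem~\ref{theorem_ergodic_representation}.
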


\subsection{Proof of Theorem \ref{theorem_ergodic_representation}}\label{section:proof}
We divide the proof into three steps. The first two steps derive,
locally and globally, the stochastic dynamics of the regime
switching forward performance process. The last step verifies the super(martingale) conditions in Definition \ref{def:forward_performance}.


\emph{Step 1}. For $t\geq 0$ and $i\in I$, let
$\bar{\mathcal{Y}}_t^i:=\mathcal{Y}_t^i-\lambda t$. Then, in each
time interval $[T_{j-1},T_j)$, we have
$$U(x,t)=U^{\alpha^{j-1}}(x,t)=\frac{x^{\delta}}{\delta}e^{\bar{\mathcal{Y}}_t^{\alpha^{j-1}}}.$$
On the other hand, for $t\in(T_{j-1},T_j]$, note that any admissible
trading strategy $\pi\in\mathcal{A}^{\mathbb{G}}$ takes the form
$\pi_t=\pi^{\alpha^{j-1}}_t,$ with $\pi^{\alpha^{j-1}}$ being
$\mathbb{F}$-progressively measurable. In turn, applying It\^o's
formula and using the equations (\ref{infhorizon_BSDE_system}) and
(\ref{wealth}), we obtain
\begin{align}\label{Ito_small_interval}
&\frac{(X_{T_{j}-}(\pi))^{\delta}}{\delta}e^{\bar{\mathcal{Y}}_{T_{j}-}^{\alpha^{j-1}}}-
\frac{(X_{T_{j-1}}(\pi))^{\delta}}{\delta}e^{\bar{\mathcal{Y}}_{T_{j-1}}^{\alpha^{j-1}}}\notag\\
=\
&\int_{T_{j-1}}^{T_{j}}\frac{(X_s(\pi))^{\delta}}{\delta}e^{\bar{\mathcal{Y}}_s^{\alpha^{j-1}}}
\left[f^{\alpha^{j-1}}(V_s,\mathcal{Z}_s^{\alpha^{j-1}};\pi^{\alpha^{j-1}}_s)-
f^{\alpha^{j-1}}(V_s,\mathcal{Z}_s^{\alpha^{j-1}})\right]ds\notag\\
&\
+\int_{T_{j-1}}^{T_{j}}\frac{(X_s(\pi))^{\delta}}{\delta}e^{\bar{\mathcal{Y}}_s^{\alpha^{j-1}}}
\sum_{k\in
I}q^{\alpha^{j-1}k}\left[1-e^{\bar{\mathcal{Y}}_s^{k}-\bar{\mathcal{Y}}_s^{\alpha^{j-1}}}\right]
ds\notag\\
&\
+\int_{T_{j-1}}^{T_{j}}\frac{(X_s(\pi))^{\delta}}{\delta}e^{\bar{\mathcal{Y}}_s^{\alpha^{j-1}}}
\left(\delta\pi^{\alpha^{j-1}}_s+\mathcal{Z}_s^{\alpha^{j-1}}\right)^{tr}dW_s,
\end{align}
where
\begin{equation}\label{driver_pi}
f^{i}(v,z;\pi):=\frac12\delta(\delta-1)|\pi|^2+\delta\pi^{tr}\theta^i(v)+\delta\pi^{tr}z+\frac12|z|^2,
\end{equation}
for $i\in I$ and
$(v,z,\pi)\in\mathbb{R}^d\times\mathbb{R}^d\times\mathbb{R}^d$.

\emph{Step 2}.
Then, for $t\geq 0$ and $\pi\in\mathcal{A}^{\mathbb{G}}$, i.e.
$\pi_t=\pi_0^i+\sum_{j\geq
1}\pi^{\alpha^{j-1}}_t\chi_{(T_{j-1},T_j]}(t),$ using the
decomposition formula (\ref{decompose_formula}), we further have
\begin{align*}
\frac{(X_t(\pi))^{\delta}}{\delta}e^{\bar{\mathcal{Y}}_t^{\alpha_t}}-\frac{x^{\delta}}{\delta}e^{\bar{\mathcal{Y}}_0^{i}}
=&\ \sum_{j\geq 1}\left[\frac{(X_{t\wedge
T_{j}-}(\pi))^{\delta}}{\delta}e^{\bar{\mathcal{Y}}_{t\wedge
T_{j}-}^{\alpha^{j-1}}}-
\frac{(X_{t\wedge T_{j-1}}(\pi))^{\delta}}{\delta}e^{\bar{\mathcal{Y}}_{t\wedge T_{j-1}}^{\alpha^{j-1}}}\right]\\
&\ +\sum_{j\geq 1}\left[\frac{(X_{
T_{j}}(\pi))^{\delta}}{\delta}e^{\bar{\mathcal{Y}}_{
T_{j}}^{\alpha^{j}}}-\frac{(X_{
T_{j}-}(\pi))^{\delta}}{\delta}e^{\bar{\mathcal{Y}}_{
T_{j}-}^{\alpha^{j-1}}}\right]\chi_{\{T_{j}\leq
t\}}\\
=&\ (I)+(II).
\end{align*}

For the continuous component $(I)$, using (\ref{Ito_small_interval})
and the facts that $\alpha_{s-}=\alpha^{j-1}$,
$\pi_s=\pi^{\alpha^{j-1}}_s$, for $s\in(t\wedge T_{j-1},t\wedge
T_j]$, we deduce that
\begin{align}\label{continous_component}
(I)=&\int_{0}^t\frac{(X_s(\pi))^{\delta}}{\delta}e^{\bar{\mathcal{Y}}_s^{\alpha_{s-}}}\left[f^{\alpha_{s-}}(V_s,\mathcal{Z}_s^{\alpha_{s-}};\pi_s)-f^{\alpha_{s-}}(V_s,\mathcal{Z}_s^{\alpha_{s-}})
\right]ds\notag\\
&+\int_0^t\frac{(X_s(\pi))^{\delta}}{\delta}e^{\bar{\mathcal{Y}}_s^{\alpha_{s-}}}\sum_{k\in
I}q^{\alpha_{s-}k}\left[1-e^{\bar{\mathcal{Y}}_s^{k}-\bar{\mathcal{Y}}_s^{\alpha_{s-}}}\right]ds\notag\\
&+\int_0^t\frac{(X_s(\pi))^{\delta}}{\delta}e^{\bar{\mathcal{Y}}_s^{\alpha_{s-}}}\left(\delta\pi_s+\mathcal{Z}_s^{\alpha_{s-}}\right)^{tr}dW_s.
\end{align}

For the jump component $(II)$, using Lemma \ref{lemma_Ito}, we
deduce that
\begin{align}\label{jump_component}
(II)=&\int_0^t\frac{(X_s(\pi))^{\delta}}{\delta}e^{\bar{\mathcal{Y}}_s^{\alpha_{s-}}}
\sum_{k,k^{\prime}\in
I}\left[e^{\bar{\mathcal{Y}}_s^k-\bar{\mathcal{Y}}_s^{k^{\prime}}}-1\right]\chi_{\{\alpha_{s-}=k^{\prime}\}}d\tilde{N}_s^{k^{\prime}k}\notag\\
&+\int_0^t\frac{(X_s(\pi))^{\delta}}{\delta}e^{\bar{\mathcal{Y}}_s^{\alpha_{s-}}}
\sum_{k\in
I}q^{\alpha_{s-}k}\left[e^{\bar{\mathcal{Y}}_s^k-\bar{\mathcal{Y}}_s^{\alpha_{s-}}}-1\right]ds.
\end{align}

It then follows from (\ref{continous_component}) and
(\ref{jump_component}) that
\begin{align*}
\frac{(X_t(\pi))^{\delta}}{\delta}e^{\bar{\mathcal{Y}}_t^{\alpha_t}}-\frac{x^{\delta}}{\delta}e^{\bar{\mathcal{Y}}_0^{i}}
=&\int_{0}^t\frac{(X_s(\pi))^{\delta}}{\delta}e^{\bar{\mathcal{Y}}_s^{\alpha_{s-}}}\left[f^{\alpha_{s-}}(V_s,\mathcal{Z}_s^{\alpha_{s-}};\pi_s)-f^{\alpha_{s-}}(V_s,\mathcal{Z}_s^{\alpha_{s-}})
\right]ds\notag\\
&+\int_0^t\frac{(X_s(\pi))^{\delta}}{\delta}e^{\bar{\mathcal{Y}}_s^{\alpha_{s-}}}\left(\delta\pi_s+\mathcal{Z}_s^{\alpha_{s-}}\right)^{tr}dW_s\\
&+\int_0^t\frac{(X_s(\pi))^{\delta}}{\delta}e^{\bar{\mathcal{Y}}_s^{\alpha_{s-}}}
\sum_{k,k^{\prime}\in
I}\left[e^{\bar{\mathcal{Y}}_s^k-\bar{\mathcal{Y}}_s^{k^{\prime}}}-1\right]\chi_{\{\alpha_{s-}=k^{\prime}\}}d\tilde{N}_s^{k^{\prime}k}.
\end{align*}
In turn,
\begin{align}\label{stochastic_exponential}
\frac{(X_t(\pi))^{\delta}}{\delta}e^{\bar{\mathcal{Y}}_t^{\alpha_t}}=&\
\frac{x^{\delta}}{\delta}e^{\mathcal{Y}_0^{i}}\times
e^{\int_0^{t}f^{\alpha_{s-}}(V_s,\mathcal{Z}_s^{\alpha_{s-}};\pi_s)-f^{\alpha_{s-}}(V_s,\mathcal{Z}_s^{\alpha_{s-}})ds}\notag\\
&\times\mathcal{E}_t\left(\int_0^{\cdot}\left(\delta\pi_s+\mathcal{Z}_s^{\alpha_{s-}}\right)^{tr}dW_s\right)\notag\\
&\times\mathcal{E}_t\left(\int_0^{\cdot} \sum_{k,k^{\prime}\in
I}\left[e^{\mathcal{Y}_s^k-\mathcal{Y}_s^{k^{\prime}}}-1\right]\chi_{\{\alpha_{s-}=k^{\prime}\}}d\tilde{N}_s^{k^{\prime}k}\right),
\end{align}
for any $\pi\in\mathcal{A}^{\mathbb{G}}$, where $\mathcal{E}(\cdot)$
denotes Dol\'eans-Dade stochastic exponential.

\emph{Step 3}. We verify the conditions in Definition
\ref{def:forward_performance}. It is clear that (i) and (ii) hold, so we only verify the super(martingale) conditions in (iii). It follows from (\ref{driver_BSDE})
and (\ref{driver_pi}) that
$$f^{\alpha_{s-}}(V_s,\mathcal{Z}_s^{\alpha_{s-}};\pi_s)-f^{\alpha_{s-}}(V_s,\mathcal{Z}_s^{\alpha_{s-}})\leq
0,$$ for any $\pi\in\mathcal{A}^{\mathbb{G}}$. So the process $\frac{(X_t(\pi))^{\delta}}{\delta}e^{\bar{\mathcal{Y}}_t^{\alpha_t}}$, $t\geq 0$, is a local super-martingale (see (\ref{stochastic_exponential})). Next, we verify the second stochastic exponential on the right hand side of (\ref{stochastic_exponential}) is a nonnegative bounded $\mathbb{G}$-martingale. Indeed, define
$$\eta^{k'k}_s:=\left[e^{\bar{\mathcal{Y}}_s^k-\bar{\mathcal{Y}}_s^{k^{\prime}}}-1\right]\chi_{\{\alpha_{s-}=k^{\prime}\}}=
\left[e^{{\mathcal{Y}}_s^k-{\mathcal{Y}}_s^{k^{\prime}}}-1\right]\chi_{\{\alpha_{s-}=k^{\prime}\}}, \quad s\geq 0.$$
In turn,
\begin{align*}
&\mathcal{E}_t\left(\int_0^{\cdot} \sum_{k,k^{\prime}\in
I}\left[e^{\mathcal{Y}_s^k-\mathcal{Y}_s^{k^{\prime}}}-1\right]\chi_{\{\alpha_{s-}=k^{\prime}\}}d\tilde{N}_s^{k^{\prime}k}\right)\\
&=\prod_{k,k'\in
I}\mathcal{E}_t\left(\int_0^{\cdot}\eta^{k'k}_s(dN_s^{k'k}-q^{k'k}ds)\right)\\
&=\prod_{k,k'\in I}e^{-\int_0^t\eta_s^{k'k}q^{k'k}ds}\prod_{0<s\leq
t}(1+\eta^{k'k}_s\Delta N_s^{k'k}).
\end{align*}
The estimate (\ref{estimate_y_2}) in Theorem~\ref{theorem_ergodic_BSDE_system} implies that the difference of any two
components $\mathcal{Y}^{k}$ and $\mathcal{Y}^{k'}$ is bounded:
\begin{equation}\label{difference_esitmate}
|\mathcal{Y}_s^{k}-\mathcal{Y}_s^{k'}|\leq
\frac{1}{q^{\min}}\left(K_f+\frac{C_vC_{\eta}C_z}{(C_{\eta}-C_v)^2}\right),
\end{equation}
so $\eta^{k'k}$ is bounded. Since $1+\eta^{k'k}_s\Delta
N_s^{k'k}\geq 0,$ it follows that
$\mathcal{E}\left(\int_0^{\cdot}\sum_{k,k'\in
I}\eta^{k'k}_sd\tilde{N}_s^{k'k}\right)$ is a nonnegative bounded
$\mathbb{G}$-martingale. In turn, $\frac{(X_t(\pi))^{\delta}}{\delta}e^{\bar{\mathcal{Y}}_t^{\alpha_t}}$, $t\geq 0$, is a nonnegative local super-martingale, so it is a super-martingale for any $\pi\in\mathcal{A}^{\mathbb{G}}$, and
the super-martingale condition (\ref{supermartingale}) has been verified.

Finally, note that with
$\pi^{*}_s=\text{Proj}_{\Pi^{\alpha_{s-}}}\left(\frac{\mathcal{Z}_s^{\alpha_{s-}}+\theta^{\alpha_{s-}}(V_s)}{1-\delta}\right),$
we have
$$f^{\alpha_{s-}}(V_s,\mathcal{Z}_s^{\alpha_{s-}};\pi_s^*)-f^{\alpha_{s-}}(V_s,\mathcal{Z}_s^{\alpha_{s-}})=0.$$
The estimate (\ref{estimate_z_1}) in Theorem \ref{theorem_ergodic_BSDE_system} implies that $\mathcal{Z}^i$ is bounded, so the optimal trading strategy $\pi^{*}$ is also bounded and therefore $\pi^{*}\in\mathcal{A}^{\mathbb{G}}$. Note that
$\int_0^{\cdot}(\delta \pi_s^{i,*}+Z_s^{i})^{tr}dW_s$ is an
$\mathbb{F}$-BMO martingale. In turn,
$\mathcal{E}\left(\int_0^{\cdot}(\delta
\pi^*_s+Z_s^{\alpha_{s-}})^{tr}dW_s\right)$ is a uniformly
integrable $\mathbb{G}$-martingale. On the other hand, we have shown that $\mathcal{E}\left(\int_0^{\cdot}\sum_{k,k'\in
I}\eta^{k'k}_sd\tilde{N}_s^{k'k}\right)$ is a nonnegative bounded
$\mathbb{G}$-martingale. Thus, we easily conclude from (\ref{stochastic_exponential}) the martingale condition (\ref{martingale}) for $\frac{(X_t(\pi^*))^{\delta}}{\delta}e^{\bar{\mathcal{Y}}_t^{\alpha_t}}$, $t\geq 0$.


\subsection{Connection with classical utility maximization}

We provide an interpretation of the constant $\lambda ,$ appearing
in the
representation of the Markovian forward performance process (\ref{representation_2}%
), as the solution of the risk-sensitive control problem (\ref%
{ErgodicControlProblem}) below. It turns out that the constant
$\lambda $ is also the optimal long-term growth rate of the  utility
maximization problem (see (\ref%
{ErgodicControlProblem1}) below). For this, we need to shrink the admissible set $\mathcal{A}^{\mathbb{G}}$ to $\bar{\mathcal{A}}^{\mathbb{G}}$ defined as per below:
$$\bar{\mathcal{A}}^{\mathbb{G}}_{[0,t]}=\left\{\pi\in\mathcal{A}^{\mathbb{G}}_{[0,t]}:\
\int_0^{\cdot}(\pi_s^j)^{tr}dW_s\ \text{is an}\
\mathbb{F}\text{-BMO martingale}.\right\}$$
Let $\bar{\mathcal{A}}^{\mathbb{G}}=\cup _{t\geq
0}\bar{\mathcal{A}}^{\mathbb{G}}_{[0,t]}$. Note that for $\pi^*$ given in (\ref{optimal_strategy}), since it is bounded, we also have $\pi^*\in\bar{\mathcal{A}}^{\mathbb{G}}\subset\mathcal{A}^{\mathbb{G}}$.


\begin{proposition}
\label{propositionLambda} Let $T>0$ and ${\pi}\in
\bar{\mathcal{A}}^{\mathbb{G}}$. Define the probability measure
${\mathbb{P}}^{{\pi }}$ as
\begin{equation*}
\frac{d{\mathbb{P}}^{{\pi }}}{d\mathbb{P}}:=\mathcal{E}_T\left( \int_{0}^{\cdot}\delta {\pi }%
_{u}^{tr}dW_{u}\right),
\end{equation*}%
and the cost functional
\begin{equation*}
L^i(v;\pi):=\frac{1}{2}\delta (\delta-1)|{\pi }|^{2}+\delta\pi^{tr}
\theta^i (v),
\end{equation*}
for $i\in I$ and $(v,z)\in\mathbb{R}^d\times\mathbb{R}^d$.

Let $\left((\mathcal{Y}^{i},\mathcal{Z}^{i})_{i\in
I},\lambda\right)$ be the unique Markovian solution of the ergodic BSDE system
(\ref{ergodic_BSDE_system}) with driver $f^i$ as in
(\ref{driver_BSDE}), and satisfy (\ref{estmate_y_1}),
(\ref{estimate_z_1}) and (\ref{estimate_y_2}). Then, $\lambda $ is the long-term growth rate
of the risk-sensitive control problem
\begin{equation}
\lambda =\sup_{{\pi }\in \bar{\mathcal{A}}^{\mathbb{G}}}\limsup_{T\uparrow \infty }\frac{1}{T}%
\ln \mathbb{E}^{{\mathbb{P}}^{{\pi }}}\left[ e^{\int_{0}^{T}L^{\alpha_{s-}}(V_{s},{\pi }%
_{s})ds}\right],  \label{ErgodicControlProblem}
\end{equation}%
or, alternatively,
\begin{equation}
\lambda =\sup_{\pi \in \bar{\mathcal{A}}^{\mathbb{G}}}\limsup_{T\uparrow
\infty }\frac{1}{T}\ln \mathbb{E}\left[\frac{(X_{T}(\pi))^{\delta
}}{\delta }\right]. \label{ErgodicControlProblem1}
\end{equation}%
For both problems (\ref{ErgodicControlProblem})\ and (\ref%
{ErgodicControlProblem1}), the associated optimal control in each
regime $i$ is $\pi^{i,*}$ as in (\ref{optimal_strategy}).
\end{proposition}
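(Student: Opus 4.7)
The plan is to combine the BSDE representation of Theorem \ref{theorem_ergodic_representation} with a change-of-measure argument, handling the auxiliary growth of $\mathcal{Y}$ via the ergodic properties of $V$.

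First, I would reduce (\ref{ErgodicControlProblem}) and (\ref{ErgodicControlProblem1}) to the same quantity. A direct Itô computation from the wealth dynamics (\ref{wealth}) and the explicit form of $L^i$ gives the algebraic identity
\begin{equation*}
\mathcal{E}_T\!\left(\int_0^{\cdot}\delta\pi_u^{tr}dW_u\right)\exp\!\left\{\int_0^T L^{\alpha_{s-}}(V_s,\pi_s)\,ds\right\}=\frac{(X_T(\pi))^\delta}{x^\delta},
\end{equation*}
where the coefficient $\tfrac12\delta(\delta-1)-\tfrac{\delta^2}{2}=-\tfrac\delta2$ of $|\pi|^2$ reconciles the exponents. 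For $\pi\in\bar{\mathcal{A}}^{\mathbb{G}}$, the BMO property of $\int\pi^{tr}dW$ (Kazamaki's criterion) makes $\mathcal{E}_T(\int\delta\pi^{tr}dW)$ a true $\mathbb{P}$-martingale, so Bayes' rule yields $\mathbb{E}^{\mathbb{P}^\pi}[e^{\int_0^T L\,ds}]=x^{-\delta}\mathbb{E}[(X_T(\pi))^\delta]$; the two problems differ by $\tfrac{\delta\log x}{T}\to 0$ and it suffices to establish (\ref{ErgodicControlProblem1}).

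Next, I would exploit identity (\ref{stochastic_exponential}). Setting $\bar{\mathcal{Y}}_T^{\alpha_T}=\mathcal{Y}_T^{\alpha_T}-\lambda T$ and rearranging,
\begin{equation*}
\frac{(X_T(\pi))^\delta}{\delta}=\frac{x^\delta}{\delta}\,e^{\mathcal{Y}_0^{\,i}+\lambda T-\mathcal{Y}_T^{\alpha_T}}\,e^{\int_0^T\Delta f_s\,ds}\,\mathcal{E}_T^{W}\,\mathcal{E}_T^{N},
\end{equation*}
where $\Delta f_s:=f^{\alpha_{s-}}(V_s,\mathcal{Z}_s^{\alpha_{s-}};\pi_s)-f^{\alpha_{s-}}(V_s,\mathcal{Z}_s^{\alpha_{s-}})\le 0$ (equality iff $\pi_s=\pi_s^*$), while $\mathcal{E}_T^{W},\mathcal{E}_T^{N}$ are the stochastic exponentials against $W$ and $(\tilde N^{k'k})$. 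For $\pi\in\bar{\mathcal{A}}^{\mathbb{G}}$, the former is a true $\mathbb{G}$-martingale by BMO plus the boundedness of $\mathcal{Z}^i$ from (\ref{estimate_z_1}), and the latter was shown bounded martingale in Section~\ref{section:proof}. Defining $d\hat{\mathbb{P}}^\pi/d\mathbb{P}|_{\mathcal{G}_T}:=\mathcal{E}_T^{W}\mathcal{E}_T^{N}$ gives
\begin{equation*}
\mathbb{E}\!\left[\tfrac{(X_T(\pi))^\delta}{\delta}\right]=\tfrac{x^\delta}{\delta}\,e^{\mathcal{Y}_0^{\,i}+\lambda T}\,\mathbb{E}^{\hat{\mathbb{P}}^\pi}\!\left[e^{-\mathcal{Y}_T^{\alpha_T}+\int_0^T\Delta f_s\,ds}\right].
\end{equation*}
Upper bound: $\Delta f_s\le 0$ and the linear growth $|\mathcal{Y}_T^{\alpha_T}|\le C_y(1+|V_T|)$ from (\ref{estmate_y_1}) reduce matters to $\tfrac1T\ln\mathbb{E}^{\hat{\mathbb{P}}^\pi}[e^{C_y|V_T|}]\to 0$. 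Under $\hat{\mathbb{P}}^\pi$, by Girsanov $V$ satisfies $dV_t=[\eta(V_t)+\kappa(\delta\pi_t+\mathcal{Z}_t^{\alpha_{t-}})]dt+\kappa\,d\hat W_t^\pi$; applying Itô to $|V_t|^2$ with the strict dissipativity $C_\eta>C_v$, the bound $|\mathcal{Z}|\le K_z$, and the BMO stability of $\pi$ yields a Lyapunov estimate with sub-exponential growth, giving $\limsup_{T\to\infty}\tfrac1T\ln\mathbb{E}[(X_T(\pi))^\delta/\delta]\le\lambda$ uniformly in admissible $\pi$. Lower bound: with $\pi=\pi^*$ one has $\Delta f_s\equiv 0$, and since $\pi^*$ is bounded (projection of the bounded $(\mathcal{Z}+\theta^i(V))/(1-\delta)$), the modified drift is bounded and $\sup_T\mathbb{E}^{\hat{\mathbb{P}}^{\pi^*}}[e^{\gamma|V_T|}]<\infty$ for some $\gamma>C_y$; Jensen bounds $\mathbb{E}^{\hat{\mathbb{P}}^{\pi^*}}[e^{-\mathcal{Y}_T^{\alpha_T}}]$ above and below by positive constants, so $\tfrac1T\ln\mathbb{E}[(X_T(\pi^*))^\delta/\delta]\to\lambda$, and $\pi^*$ attains the supremum in both (\ref{ErgodicControlProblem}) and (\ref{ErgodicControlProblem1}).

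The main obstacle is the Lyapunov-type exponential-moment estimate for $V$ under $\hat{\mathbb{P}}^\pi$ for a \emph{general} BMO admissible $\pi$, whose paths need not be bounded: one must show $\ln\mathbb{E}^{\hat{\mathbb{P}}^\pi}[e^{C_y|V_T|}]=o(T)$ despite the drift perturbation $\kappa(\delta\pi+\mathcal{Z})$. The critical ingredients are the \emph{strict} dissipativity $C_\eta>C_v$ (which leaves room after absorbing the bounded $\kappa\mathcal{Z}$), the BMO-Girsanov stability theorem of Kazamaki (which gives $\mathbb{E}^{\hat{\mathbb{P}}^\pi}[\int_0^T|\pi_s|^2ds]=O(T)$), and a Gronwall-Feynman-Kac estimate on $\mathbb{E}^{\hat{\mathbb{P}}^\pi}[e^{\gamma|V_T|}]$. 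Once this estimate is in place, the identification of $\lambda$ and the optimality of $\pi^*$ follow directly from the identity above, with equality at $\pi=\pi^*$ since $\Delta f_s\equiv 0$ there.
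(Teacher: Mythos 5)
Your proposal is correct and follows essentially the same route as the paper's proof: the same multiplicative identity for $(X_T(\pi))^{\delta}e^{\mathcal{Y}_T^{\alpha_T}-\lambda T}$ (equivalently, for $e^{\mathcal{Y}_T^{\alpha_T}}$ under $\mathbb{P}^{\pi}$), the same observation that $f^i(v,z)=\sup_{\pi\in\Pi}(L^i(v,\pi)+\delta z^{tr}\pi)+\frac12|z|^2$ makes the residual exponential factor $\leq 1$ with equality at $\pi^{*}$, the same composite change of measure via the Dol\'eans exponentials in $W$ and $\tilde{N}^{k'k}$, and the same use of the linear growth (\ref{estmate_y_1}) of $\mathbf{y}^i$ together with the moment estimate (\ref{first_estimate}) to make the boundary term negligible at rate $1/T$. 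The only differences are presentational: you reuse (\ref{stochastic_exponential}) rather than redoing the It\^o computation for $e^{\mathcal{Y}_t^{\alpha_t}}$, and you spell out the exponential-moment/Lyapunov estimate for $V$ under the perturbed measure for a general BMO admissible $\pi$, a point the paper disposes of by citing Proposition \ref{coupling_lemma}.
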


\begin{proof}
We first observe that the driver $f^i$ in (\ref{driver_BSDE}) can be
written as
\begin{equation*}
f^i(v,z)=\sup_{{\pi}\in \Pi }\left( L^i(v,\pi)+z^{tr}\delta
{\pi}\right) +\frac{1}{2}|z|^{2}.
\end{equation*}%
Therefore, for arbitrary admissible $\tilde{\pi}$, we apply It\^o's
formula to the ergodic BSDE system (\ref{ergodic_BSDE_system}) on
$[T_{j-1},T_j)$, and obtain
\begin{align*}
&e^{\mathcal{Y}^{\alpha^{j-1}}_{T_{j}-}}-e^{\mathcal{Y}^{\alpha^{j-1}}_{T_{j-1}}}\\
=&\int_{T_{j-1}}^{T_{j}}e^{\mathcal{Y}_s^{\alpha^{j-1}}}\left[-\sup_{\pi_s^{\alpha^{j-1}}\in\Pi}
\left(L^{\alpha^{j-1}}(V_s,\pi^{\alpha^{j-1}}_s)+(\mathcal{Z}_s^{\alpha^{j-1}})^{tr}\delta\pi_s^{\alpha^{j-1}}\right)+(\mathcal{Z}_s^{\alpha^{j-1}})^{tr}\delta\tilde{\pi}_s^{\alpha^{j-1}}\right]ds\notag\\
&+\int_{T_{j-1}}^{T_{j}}e^{\mathcal{Y}_s^{\alpha^{j-1}}}\left[\lambda-\sum_{k\in
I}q^{\alpha^{j-1}k}(e^{\mathcal{Y}_s^{k}-\mathcal{Y}_s^{\alpha^{j-1}}}-1)\right]ds\notag\\
&+\int_{T_{j-1}}^{T_{j}}e^{\mathcal{Y}_s^{\alpha^{j-1}}}(\mathcal{Z}_s^{\alpha^{j-1}})^{tr}(dW_s-\delta\tilde{\pi}_s^{\alpha^{j-1}}ds).\notag
\end{align*}
In general, we decompose $e^{\mathcal{Y}_T^{\alpha_T}}$ into
continuous and jump components as
\begin{align*}
e^{\mathcal{Y}_T^{\alpha_T}}-e^{\mathcal{Y}_0^{i}}=&\ \sum_{j\geq
1}\left[e^{\mathcal{Y}_{T\wedge T_{j}-}^{\alpha^{j-1}}}-
e^{\mathcal{Y}_{T\wedge T_{j-1}}^{\alpha^{j-1}}}\right]+\sum_{j\geq
1}\left[e^{\mathcal{Y}_{T_{j}}^{\alpha^{j}}}- e^{\mathcal{Y}_{
T_{j}-}^{\alpha^{j-1}}}\right]\chi_{\{T_{j}\leq
T\}}\\
=&\ (I)+(II).
\end{align*}

It follows from the facts that $\alpha_{s-}=\alpha^{j-1}$,
$\pi_s=\pi_s^{\alpha^{j-1}}$ and
$\tilde{\pi}_s=\tilde{\pi}_s^{\alpha^{j-1}}$ for $s\in(T\wedge
T_{j-1},T\wedge T_j]$ that (I) has the expression
\begin{align}\label{local_Ito_2}
(I)=&\int_0^Te^{\mathcal{Y}_s^{\alpha_{s-}}}\left[-\sup_{\pi_s\in\Pi}
\left(L^{\alpha_{s-}}(V_s,\pi_s)+(\mathcal{Z}_s^{\alpha_{s-}})^{tr}\delta\pi_s\right)+(\mathcal{Z}_s^{\alpha_{s-}})^{tr}\delta\tilde{\pi}_s+\lambda\right]ds\notag\\
&-\int_{0}^{T}e^{\mathcal{Y}_s^{\alpha_{s-}}}\sum_{k\in
I}q^{\alpha_{s-}k}(e^{\mathcal{Y}_s^{k}-\mathcal{Y}_s^{\alpha_{s-}}}-1)ds\notag\\
&+\int_{0}^{T}e^{\mathcal{Y}_s^{\alpha_{s-}}}(\mathcal{Z}_s^{\alpha_{s-}})^{tr}(dW_s-\delta\tilde{\pi}_sds).
\end{align}
Furthermore, it follows from Lemma \ref{lemma_Ito} that (II) has the
expression
\begin{align}\label{local_Ito_3}
(II)=&\int_0^Te^{\mathcal{Y}_s^{\alpha_{s-}}}\sum_{k,k^{\prime}\in
I}\left(e^{\mathcal{Y}_s^k-\mathcal{Y}_s^{k^{\prime}}}-1\right)\chi_{\{\alpha_{s-}=k^{\prime}\}}d\tilde{N}_s^{k^{\prime}k}\notag\\
&+\int_0^Te^{\mathcal{Y}_s^{\alpha_{s-}}} \sum_{k\in
I}q^{\alpha_{s-}k}\left(e^{\mathcal{Y}_s^k-\mathcal{Y}_s^{\alpha_{s-}}}-1\right)ds.
\end{align}
Consequently, combining (\ref{local_Ito_2}) and (\ref{local_Ito_3}),
we obtain
\begin{align*}
e^{\mathcal{Y}_T^{\alpha_T}}-e^{\mathcal{Y}_0^{i}}=&\int_0^Te^{Y_s^{\alpha_{s-}}}\left[-\sup_{\pi_s\in\Pi}
\left(L^{\alpha_{s-}}(V_s,\pi_s)+(\mathcal{Z}_s^{\alpha_{s-}})^{tr}\delta\pi_s\right)+(\mathcal{Z}_s^{\alpha_{s-}})^{tr}\delta\tilde{\pi}_s+\lambda\right]ds\notag\\
&+\int_{0}^{T}e^{\mathcal{Y}_s^{\alpha_{s-}}}(\mathcal{Z}_s^{\alpha_{s-}})^{tr}dW_s^{{\mathbb{P}}^{\tilde{\pi}}}\notag\\
&+\int_0^Te^{\mathcal{Y}_s^{\alpha_{s-}}}\sum_{k,k^{\prime}\in
I}\left(e^{\mathcal{Y}_s^k-\mathcal{Y}_s^{k^{\prime}}}-1\right)\chi_{\{\alpha_{s-}=k^{\prime}\}}d\tilde{N}_s^{k^{\prime}k},
\end{align*}
where the process $W_{t}^{{\mathbb{P}}^{\tilde{\pi}}}:=W_{t}-\int_{0}^{t}%
\delta \tilde{\pi}_{u}du$, $t\geq 0$, is a Brownian motion under ${\mathbb{P}%
}^{\tilde{\pi}}$. In turn,%
\begin{align*}
e^{\mathcal{Y}_T^{\alpha_T}}=&\ e^{\mathcal{Y}_0^{i}+\lambda
T}\mathcal{E}_T\left( \int_{0}^{\cdot
}(\mathcal{Z}_{s}^{\alpha_{s-}})^{tr}dW_{s}^{\mathbb{P}^{\tilde{\pi}}}\right)
\mathcal{E}_T\left(\sum_{k,k^{\prime}\in
I}\left(e^{\mathcal{Y}_s^k-\mathcal{Y}_s^{k^{\prime}}}-1\right)\chi_{\{\alpha_{s-}=k^{\prime}\}}d\tilde{N}_s^{k^{\prime}k}\right)
\\
&\ \times
e^{-\int_{0}^{T}L^{\alpha_{s-}}(V_{s},\tilde{\pi}_{s})ds}\\
&\  \times e^{\int_{0}^{T}\left[\left(L^{\alpha_{s-}}(V_{s},\tilde{%
\pi}_{s})+(\mathcal{Z}_{s}^{\alpha_{s-}})^{tr}\delta
\tilde{\pi}_{s}\right)-\sup_{{\pi }_{s}\in \Pi }\left(
L^{\alpha_{s-}}(V_{s},\pi
_{s})+(\mathcal{Z}_{s}^{\alpha_{s-}})^{tr}\delta {\pi }_{s}\right)
\right]ds}.
\end{align*}%

Next, we observe that for any $\tilde{\pi}\in\bar{\mathcal{A}}^{\mathbb{G}}$, the last exponential term on the right
hand side is bounded above by $1$. Taking expectation under
$\mathbb{P}^{\tilde{\pi}}$ then yields
\begin{align*}
&\mathbb{E}^{{\mathbb{P}}^{\tilde{\pi}}}\left[ e^{\int_{0}^{T}L^{\alpha_{s-}}(V_{s},%
\tilde{\pi}_{s})ds}\right]e^{-\mathcal{Y}_0^i-\lambda
T}\\
\leq &\
\mathbb{E}^{{\mathbb{P}}^{\tilde{\pi}}}\left[e^{-\mathcal{Y}_T^{\alpha_T}}
\mathcal{E}_T\left( \int_{0}^{\cdot
}(\mathcal{Z}_{s}^{\alpha_{s-}})^{tr}dW_{s}^{\mathbb{P}^{\tilde{\pi}}}\right)
\mathcal{E}_T\left(\sum_{k,k^{\prime}\in
I}\left(e^{\mathcal{Y}_s^k-\mathcal{Y}_s^{k^{\prime}}}-1\right)\chi_{\{\alpha_{s-}=k^{\prime}\}}d\tilde{N}_s^{k^{\prime}k}\right)
\right].
\end{align*}%
Define the probability measure $\mathbb{Q}^{\tilde{\pi}}$ as
\begin{equation*}
\frac{d\mathbb{Q}^{\tilde{\pi}}}{d{\mathbb{P}}^{\tilde{\pi
}}}:=\mathcal{E}_T\left( \int_{0}^{\cdot
}(\mathcal{Z}_{s}^{\alpha_{s-}})^{tr}dW_{s}^{\mathbb{P}^{\tilde{\pi}}}\right)
\mathcal{E}_T\left(\int_0^{\cdot}\sum_{k,k^{\prime}\in
I}\left(e^{\mathcal{Y}_s^k-\mathcal{Y}_s^{k^{\prime}}}-1\right)\chi_{\{\alpha_{s-}=k^{\prime}\}}d\tilde{N}_s^{k^{\prime}k}\right).
\end{equation*}%
Then, it follows from the linear growth condition (\ref{estmate_y_1}) of
$\mathcal{Y}_T^i=\mathbf{y}^i(V_T)$ and Assumption
\ref{assumption_13} on $V$ that
\begin{equation*}
\frac{1}{C}\leq \mathbb{E}^{\mathbb{Q}^{\tilde{\pi}}}\left(
e^{-\mathcal{Y}_{T}^{\alpha_T}}\right) \leq C,
\end{equation*}%
for some constant $C$ independent of $T$ (see
(\ref{first_estimate})). Consequently,
\begin{equation*}
\frac{1}{T}\ln \mathbb{E}^{{\mathbb{P}}^{\tilde{\pi}}}\left[
e^{\int_{0}^{T}L^{\alpha_{s-}}(V_{s},\tilde{\pi}_{s})ds}\right]\leq
\lambda +\frac{Y_{0}^i}{T}+\frac{1}{T}\ln
\mathbb{E}^{\mathbb{Q}^{{\tilde{\pi}}}}\left(
e^{-\mathcal{Y}^{\alpha_T}_{T}}\right).
\end{equation*}%
Sending $T\rightarrow\infty$, we obtain, for any
$\tilde{\pi}\in\bar{\mathcal{A}}^{\mathbb{G}}$,
$$\lambda\geq \limsup_{T\uparrow \infty }\frac{1}{T}%
\ln \mathbb{E}^{{\mathbb{P}}^{{\pi }}}\left[ e^{\int_{0}^{T}L^{\alpha_{s-}}(V_{s},\tilde{\pi }%
_{s})ds}\right],$$ with equality choosing $\tilde{\pi}_{s}=\pi
_{s}^{\ast }$, with $\pi _{s}^{\ast }$ as in
(\ref{optimal_strategy}).

To show that $\lambda $ also solves (\ref{ErgodicControlProblem1}),
we observe that for $\pi \in \bar{\mathcal{A}}^{\mathbb{G}}$, we have
\begin{align*}
\mathbb{E}\left[ \frac{(X_{T}^{\pi })^{\delta }}{\delta }\right]&=\frac{%
X_{0}^{\delta }}{\delta}\mathbb{E}\left[
e^{\int_{0}^{T}L^{\alpha_{s-}}(V_s,\pi
_{s})ds}\mathcal{E}_T\left(\int_{0}^{\cdot }\delta {\pi }_{s}^{tr}dW_{s}%
\right) _{T}\right]\\
&=\frac{x^{\delta }}{\delta }\mathbb{E}^{\mathbb{P}^{\pi }}\left[
e^{\int_{0}^{T}L^{\alpha_{s-}}(V_{s},\pi _{s})ds}\right],
\end{align*}%
and the rest of the arguments follow.
\end{proof}


\section{Application to the large time behavior of PDE systems with quadratic growth Hamiltonians}\label{section: large_time}

As the second application, we use the ergodic BSDE system
(\ref{ergodic_BSDE_system}) to study the large time behavior of the
PDE system with quadratic growth Hamiltonians, namely
\begin{align}\label{PDE_system}
-\partial_t\mathbf{y}^i(t,v)&+\frac12\text{Trace}(\kappa^{tr}\kappa\nabla_v^2\mathbf{y}^i(t,v))+\eta(v)^{tr}\nabla_v\mathbf{y}^i(t,v)\notag\\
&+f^i(v,\kappa^{tr}\nabla_v\mathbf{y}^i(t,v))+\sum_{k\in
I}q^{ik}\left(e^{(\mathbf{y}^k-\mathbf{y}^i)(t,v)}-1\right)=0,
\end{align}
with initial condition $\mathbf{y}^i(0,v)=h^i(v)$, for
$(t,v)\in\mathbb{R}_+\times\mathbb{R}^d$ and $i\in I$. The data
$\kappa,\eta(\cdot),f^i(\cdot,\cdot)$ and $q^{ik}$ of the PDE system
are assumed to satisfy Assumptions
\ref{assumption_11}-\ref{assumption_15} and, moreover, the initial
condition $h^i(\cdot)$ is bounded and Lipschitz continuous. Due to
Assumption \ref{assumption_11}(ii), the Hamiltonians
$f^i(\cdot,\cdot)$ has quadratic growth in the gradients $\nabla_v
\mathbf{y}^i(t,v)$. For this reason, (\ref{PDE_system}) is dubbed as
\emph{a PDE system with quadratic growth Hamiltonians}. A special
case of the above PDE system (\ref{PDE_system}) has been considered
in \cite{Bec0} and \cite{Bec} to study the utility indifference
prices of financial derivatives in a regime switching market.

The scalar case of (\ref{PDE_system}) and its large time behavior has been studied in \cite{Hu11} using the ergodic BSDE approach. We extend their result from the scalar case to the system of equations. First, we provide a probabilistic representation for the PDE system (\ref{PDE_system}).
For $T>0$, let $(\mathcal{Y}^{i,v}(T),\mathcal{Z}^{i,v}(T))_{i\in I}$ be a solution to
the finite horizon BSDE system
\begin{align}\label{fhorizon_BSDE_system_truncation_111}
\mathcal{Y}_t^{i,v}(T)=&\
h^{i}(V_T^{v})+\int_t^T\left[f^i(V_s^v,\mathcal{Z}_s^{i,v}(T))+\sum_{k\in
I}q^{ik}(e^{\mathcal{Y}_s^{k,v}(T)-\mathcal{Y}_s^{i,v}(T)}-1)\right]ds\notag\\
&-\int_t^T(\mathcal{Z}_s^{i,v}(T))^{tr}dW_s.
\end{align}
Following along the similar arguments used to solve the finite
horizon BSDE system (\ref{fhorizon_BSDE_system_truncation}) (see
section \ref{subsection_estimate} with $\rho=0$), we deduce that
$(\mathcal{Y}^{i,v}(T),\mathcal{Z}^{i,v}(T))_{i\in I}$ is actually
the unique bounded solution of
(\ref{fhorizon_BSDE_system_truncation_111}) with
\begin{equation}\label{bound_of_Z_11}
|\mathcal{Z}_t^{i,v}(T)|\leq\frac{C_v}{C_{\eta}-C_{v}}+C_h.
\end{equation}
Note that the bound of $\mathcal{Y}^{i,v}(T)$ may depend on $T$.  Furthermore, following from \cite[Theorems 3.4 and 3.5]{Barles}, we deduce that $\mathbf{y}^i(\cdot,\cdot)$, defined as $\mathbf{y}^i(T-t,V_t^v):=\mathcal{Y}_t^{i,v}(T)$, is the unique viscosity solution to the PDE system (\ref{PDE_system}). Since the monotone condition for $\mathbf{y}^k$ in the last nonlinear term of (\ref{PDE_system}) holds, a comparison result similar to Lemma \ref{comparsion_lemma} also holds for (\ref{PDE_system}) (see Remark 3.9 in \cite{Barles}).


\begin{theorem}\label{theorem_large_time_behavior} Suppose that Assumptions
\ref{assumption_11}-\ref{assumption_15} hold, and $h^i(\cdot)$,
$i\in I$, is bounded by a constant $K_h$ and Lipschitz continuous
with its Lipschitz constant $C_h$.

Let $\left((\mathcal{Y}^{i,v},\mathcal{Z}^{i,v})_{i\in
I},\lambda\right)$ be the unique Markovian solution of the ergodic BSDE system
(\ref{ergodic_BSDE_system}) with
$\mathcal{Y}_t^{i,v}=\mathbf{y}^i(V_t^v)$ and
$\mathcal{Z}_t^{i,v}=\mathbf{z}^i(V_t^v)$ satisfying
(\ref{estmate_y_1}), (\ref{estimate_z_1}) and (\ref{estimate_y_2}). Let $\mathbf{y}^i(\cdot,\cdot)$ be the unique viscosity solution to the PDE system (\ref{PDE_system}).
Then, there exists a constant $L$, independent of $v\in\mathbb{R}^d$
and $i\in I$, such that
\begin{equation}\label{first_asymptotic}
\lim_{T\rightarrow \infty}(\mathbf{y}^i(T,v)-\lambda
T-\mathbf{y}^i(v))=L,
\end{equation}
and moreover, there exist constants $C$ and $K_v$, independent of
$T$, such that
\begin{equation}\label{second_asymptotic}
|\mathbf{y}^i(T,v)-\lambda T-\mathbf{y}^i(v)-L|\leq
C(1+|v|^2)e^{-K_vT}.
\end{equation}
\end{theorem}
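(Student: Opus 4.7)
The plan is to reduce the claim to an exponential-ergodicity statement for a joint (state, regime) process. Set
\[
\bar{Y}^i_t := \mathcal{Y}^{i,v}_t(T) - \mathbf{y}^i(V_t^v) - \lambda(T-t), \qquad \bar{Z}^i_t := \mathcal{Z}^{i,v}_t(T) - \mathbf{z}^i(V_t^v),
\]
so that $\bar{Y}^i_0 = \mathbf{y}^i(T,v) - \mathbf{y}^i(v) - \lambda T$ is the target quantity while $\bar{Y}^i_T = h^i(V_T^v) - \mathbf{y}^i(V_T^v)$ is Lipschitz in $v$ and of linear growth by Theorem~\ref{theorem_ergodic_BSDE_system}. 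Subtracting (\ref{ergodic_BSDE_system}) from (\ref{fhorizon_BSDE_system_truncation_111}), the pair $(\bar{Y}^i, \bar{Z}^i)_{i\in I}$ solves a BSDE system with terminal $\bar{Y}^i_T$ and driver
\[
-\bigl[f^i(V_t^v, \mathcal{Z}^i_t(T)) - f^i(V_t^v, \mathbf{z}^i(V_t^v))\bigr] - \sum_{k \in I} q^{ik}\bigl[e^{\mathcal{Y}^k_t(T) - \mathcal{Y}^i_t(T)} - e^{\mathbf{y}^k(V_t^v) - \mathbf{y}^i(V_t^v)}\bigr].
\]

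Next I would linearise. The bounds $|\mathbf{z}^i| \le K_z$ from (\ref{estimate_z_1}), $|\mathcal{Z}^i(T)| \le K_z + C_h$ from (\ref{bound_of_Z_11}), together with Assumption~\ref{assumption_11}(ii), give $f^i(V, \mathcal{Z}^i(T)) - f^i(V, \mathbf{z}^i) = (\beta^i_t)^{tr}\bar{Z}^i_t$ with $\beta^i$ bounded uniformly in $T$. A Lemma~\ref{lemma_difference}-type comparison applied to the finite-horizon system (which just reruns the argument with $\rho = 0$ and bounded, Lipschitz terminal $h$) controls $|\mathcal{Y}^k(T) - \mathcal{Y}^i(T)|$ uniformly in $T$, so $|\bar{Y}^k_t - \bar{Y}^i_t|$ is uniformly bounded in $(t,T)$, and
\[
e^{\mathcal{Y}^k(T) - \mathcal{Y}^i(T)} - e^{\mathbf{y}^k - \mathbf{y}^i} = \tilde{q}^{ik}_t\,(\bar{Y}^k_t - \bar{Y}^i_t)
\]
with $\tilde{q}^{ik}_t > 0$ bounded above and, for $k \neq i$, bounded below by a positive constant via $q^{\min}>0$.

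Following the scalar reduction of Appendix~\ref{Appendix_B}, I would enlarge the probability space to carry an auxiliary (time-inhomogeneous) Markov chain $\tilde{\alpha}$ on $I$ with off-diagonal rates $\tilde{q}^{ik}_t$, independent of $W$. Setting $\Phi_t := \bar{Y}^{\tilde{\alpha}_t}_t$ and applying It\^o's formula for jumps (Lemma~\ref{lemma_Ito}) makes the coupling drift cancel:
\[
d\Phi_t = -(\beta^{\tilde{\alpha}_t}_t)^{tr}\bar{Z}^{\tilde{\alpha}_t}_t\,dt + (\bar{Z}^{\tilde{\alpha}_t}_t)^{tr} dW_t + dM_t,
\]
with $M$ the compensated jump martingale of $\tilde{\alpha}$. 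Since $\beta$ is bounded, $\mathcal{E}_T\bigl(\int_0^{\cdot} (\beta^{\tilde{\alpha}_s}_s)^{tr}dW_s\bigr)$ is a uniformly integrable martingale, so a Girsanov change to $\tilde{\mathbb{P}}$ turns $\Phi$ into a true martingale and yields the representation
\[
\mathbf{y}^i(T,v) - \mathbf{y}^i(v) - \lambda T = \mathbb{E}^{\tilde{\mathbb{P}}}_{(i,v)}\bigl[h^{\tilde{\alpha}_T}(V_T) - \mathbf{y}^{\tilde{\alpha}_T}(V_T)\bigr],
\]
where under $\tilde{\mathbb{P}}$ the diffusion $V$ still obeys a dissipative SDE with uniformly bounded drift perturbation.

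The last step is to extract exponential ergodicity of $(V, \tilde{\alpha})$ under $\tilde{\mathbb{P}}$. Dissipativity gives $\mathbb{E}|V_T^{v_1}-V_T^{v_2}|^2 \le |v_1-v_2|^2 e^{-2C_\eta T}$ and $\mathbb{E}|V_T^v|^2 \le C(1+|v|^2)$; the uniform positive lower bound on the off-diagonal rates of $\tilde{\alpha}$ produces exponential total-variation coupling on the finite state space $I$. A synchronous coupling of two copies $((V^{(j)},\tilde{\alpha}^{(j)}))_{j=1,2}$ with starting points $(v_j,i_j)$, combined with the Lipschitz/linear-growth character of $h^k-\mathbf{y}^k$ (via (\ref{estmate_y_1})), shows that the representing expectation differs between any two starting points by at most $C(1+|v_1|^2+|v_2|^2)e^{-K_v T}$. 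A Cauchy-in-$T$ argument in the spirit of the scalar case of \cite{Hu11}, combining this mixing estimate with the dynamic-programming identity that factors the $T_2$-representation through the value at time $T_2-T_1$, then delivers both the existence of a limit $L$ independent of $(i,v)$ and the exponential bound~(\ref{second_asymptotic}); (\ref{first_asymptotic}) follows immediately. The delicate point is that the linearised rates $\tilde{q}^{ik}_t$ inherit $T$-dependence through $\mathcal{Z}^k(T)$, so $\tilde{\alpha}$ is genuinely time-inhomogeneous with $T$-dependent parameters, and securing uniform-in-$T$ control of its rate bounds and of the BMO norm of $\beta$ is the crux; this rests precisely on (\ref{bound_of_Z_11}) and the uniform-in-$T$ analogue of Lemma~\ref{lemma_difference}.
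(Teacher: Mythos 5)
Your proposal is correct and follows essentially the same route as the paper: both reduce the system to the scalar representation $\mathbf{y}^i(T,v)-\mathbf{y}^i(v)-\lambda T=\mathbb{E}\left[h^{\cdot}(V_T)-\mathbf{y}^{\cdot}(V_T)\right]$ under a changed measure in which $V$ carries a bounded drift perturbation and the regime process has uniformly-in-$T$ bounded, strictly positive off-diagonal rates (resting, exactly as you identify, on (\ref{bound_of_Z_11}) and the $T$-uniform analogue of Lemma \ref{lemma_difference}), and then conclude via the exponential coupling estimate for the pair (diffusion, chain) of Proposition \ref{coupling_lemma} together with the tower-property Cauchy-in-$T$ argument from \cite{Hu11}. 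The one point to repair is the construction of $\tilde{\alpha}$: since the linearised rates $q^{ik}\tilde{q}^{ik}_t$ (note the factor $q^{ik}$, which your statement of the generator omits) are $\mathbb{F}$-adapted, the chain cannot be taken independent of $W$; one must either use a doubly stochastic (Cox) construction or, as the paper does, run the exogenous constant-rate chain $\alpha$ and absorb the linearised rates into a Girsanov change of the compensated Poisson martingales.
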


\begin{proof} The proof is adapted from the arguments in
\cite[Section 4.2]{Hu11}  (see also \cite{Hu12}). In the following, we only highlight the key
difference from their proof.

We first convert the BSDE system
(\ref{fhorizon_BSDE_system_truncation_111}) to a scalar-valued BSDE
driven by the Brownian motion $W$ and the Markov chain $\alpha$. To
this end, similar to Appendix \ref{Appendix_B}, for $t\in[0,T]$ and
$v\in\mathbb{R}^d$, we introduce
$$\mathcal{Y}_t^{v}(T):=\mathcal{Y}_t^{\alpha_t,v}(T)=\mathbf{y}^{\alpha_t}(T-t,V_t^v),$$
$$\mathcal{Z}_t^{v}(T):=\mathcal{Z}_t^{\alpha_{t-},v}(T)=\mathbf{z}^{\alpha_{t-}}(T-t,V_t^v),$$
and for $k',k\in I$,
$$\mathcal{U}_t^{v}(k',k;T):=\mathcal{Y}_t^{k,v}(T)-\mathcal{Y}_t^{k',v}(T)=(\mathbf{y}^{k}-\mathbf{y}^{k'})(T-t,V_t^v).$$
Then, following along the similar arguments in the proof of Lemma \ref{lemma_difference}, we deduce that
\begin{equation}\label{estimate_U_T}
|\mathcal{U}_t^{v}(k',k;T)|\leq
\frac{1}{q^{\min}}\left(K_f+\frac{C_vC_{\eta}C_z}{(C_{\eta}-C_v)^2}\right)+K_h.
\end{equation}
In turn, using Lemma \ref{lemma_Ito}, we deduce that
$(\mathcal{Y}^v(T),\mathcal{Z}^{v}(T),(\mathcal{U}^{v}(k',k;T))_{k',k\in
I})$ satisfies the scalar-valued BSDE driven by $W$ and $\alpha$,
i.e. for $t\in[0,T]$,
\begin{align}\label{ergodic_BSDE_jumps_11}
\mathcal{Y}_t^{v}(T)=&\
h^{\alpha_T}(V_T^{v})+\int_t^Tf^{\alpha_{s-}}(V_s^{v},\mathcal{Z}_s^{v}(T))dt-\int_t^T(\mathcal{Z}_s^{v}(T))^{tr}dW_s\notag\\
&+\int_t^T\sum_{k\in
I}q^{\alpha_{s-}k}\left[e^{\mathcal{U}_s^{v}(\alpha_{s-},k;T)}-1-\mathcal{U}_s^{v}(\alpha_{s-},k;T)\right]ds\notag\\
&-\int_t^T\sum_{k,k'\in
I}\mathcal{U}_s^{v}(k',k;T)\chi_{\{\alpha_{s-}=k'\}}d\tilde{N}_s^{k'k}.
\end{align}

Next, we define $\delta
\mathcal{Y}_t^{v}(T):=\mathbf{y}^{\alpha_t}(T-t,V_t^v)-\mathbf{y}^{\alpha_t}(V_t^v)-\lambda
(T-t)$  for $t\in[0,T]$. Then, we have the following key estimates.

\begin{lemma}\label{lemma_estimate}
The function
$\delta\mathcal{Y}_0^{v}(T)=\mathbf{y}^{i}(T,v)-\mathbf{y}^{i}(v)-\lambda
T, v\in \mathbb{R}^d,$ admits the following properties: There exist constants $C$ and
$K_v$, independent of $T$, such that for  arbitrary $v_1, v_2\in \mathbb{R}^d, $

(i) $|\delta\mathcal{Y}_0^{v_1}(T)|\leq C(1+|v_1|)$;

(ii) $|\delta\mathcal{Y}_0^{v_1}(T)-\delta\mathcal{Y}_0^{v_2}(T)|\leq C|v_1-v_2|$;

 (iii) $\left|\delta\mathcal{Y}_0^{v_1}(T)-\delta\mathcal{Y}_0^{v_2}(T)\right|\leq C(1+|v_1|^2+|v_2|^2)e^{-K_vT}.$

\end{lemma}

\begin{proof} First, we prove Assertion (ii). When $\eta$ and $f$ are continuously differentiable functions with bounded derivatives, noting  $$\kappa^{tr}\nabla_v\mathbf{y}^i(T-t,V_t^v)=\mathcal{Z}_t^{i,v}(T) \quad \mbox{\rm
and} \quad \kappa^{tr}\nabla_v\mathbf{y}^i(V_t^v)=\mathcal{Z}_t^{i,v}, $$
the desired  assertion follows from the boundedness of both
$\mathcal{Z}_t^{i,v}(T)$ and $\mathcal{Z}_t^{i,v}$ (cf.
(\ref{bound_of_Z_11}) and (\ref{estimate_z_1})) and Assumption
\ref{assumption_13} on $\kappa$. For our general $\eta$ and $f$, Assertion (ii) can be proved by a standard mollification argument.

Next, we prove the assertions (i) and (iii). To this end, for
$t\in[0,T]$, define
$$\delta
\mathcal{Z}_t^{v}(T):={\mathcal{Z}}^v_t(T)-{\mathcal{Z}}^v_t, \quad \mbox{\rm and } \delta
\mathcal{U}_t^{v}(k',k;T):={\mathcal{U}}^v_t(k',k;T)-{\mathcal{U}}^v_t(k',k)$$
with $k',k\in I$.
Then, we deduce from (\ref{ergodic_BSDE_jumps_11}) and
(\ref{ergodic_BSDE_jumps})  that
$(\delta\mathcal{Y}^{v}(T),\delta\mathcal{Z}^v(T),(\delta\mathcal{U}^{v}(k',k;T))_{k',k\in
I})$ satisfies
\begin{align}\label{difference_ergodic_BSDE_jumps_11}
\delta\mathcal{Y}_0^{v}(T)=&\
h^{\alpha_T}(V_T^v)-\mathbf{y}^{\alpha_T}(V_T^{v})\notag\\
&+\int_0^T\left[f^{\alpha_{s-}}(V_s^{v},\mathcal{Z}_s^{v}(T))-f^{\alpha_{s-}}(V_s^{v},{\mathcal{Z}}_s^{v})\right]ds-\int_0^T(\delta\mathcal{Z}_s^{v}(T))^{tr}dW_s\notag\\
&+\int_0^T\sum_{k\in
I}q^{\alpha_{s-}k}\left[g(\mathcal{U}_s^{v}(\alpha_{s-},k;T))-g({\mathcal{U}}_s^{v}(\alpha_{s-},k))\right]ds\notag\\
&-\int_0^T\sum_{k,k'}\delta\mathcal{U}_s^{v}(k',k;T)\chi_{\{\alpha_{s-}=k'\}}d\tilde{N}_s^{k'k},
\end{align}
where $g(\cdot)$ is given in (\ref{def_g}). Since
$\mathcal{Z}^v(T)$, $\mathcal{Z}^v$, $\mathcal{U}^v(k',k;T)$ and
$\mathcal{U}^v(k',k)$ are all uniformly bounded (cf.
(\ref{bound_of_Z_11}), (\ref{estimate_z_1}), (\ref{estimate_U_T})
and (\ref{estimate_y_2})), analogous to Appendix \ref{Appendix_B},
we may introduce an equivalent probability measure $\mathbb{Q}$,
under which we have
\begin{equation}\label{change_of_measure}
\delta\mathcal{Y}_0^{v}(T)=\mathbb{E}^{\mathbb{Q}}[h^{\alpha_T}(V_T^v)-\mathbf{y}^{\alpha_T}(V_T^{v})].
\end{equation}
Since both $h^{i}(\cdot)$ and $\mathbf{y}^{i}(\cdot)$ with $i\in I$,
have at most a linear growth, we deduce assertion (i) from the
estimate in (\ref{first_estimate}).

To prove assertion (iii), from (\ref{change_of_measure}), we have,
for $v,\bar{v}\in\mathbb{R}^d$,
$$\delta\mathcal{Y}_0^{v}(T)-\delta\mathcal{Y}_0^{\bar{v}}(T)=
\mathbb{E}^{\mathbb{Q}}\left[(h^{\alpha_T}(V_T^v)-\mathbf{y}^{\alpha_T}(V_T^{v}))-(h^{\alpha_T}(V_T^{\bar{v}})-\mathbf{y}^{\alpha_T}(V_T^{\bar{v}}))\right].$$
The conclusion then follows from the linear growth of both $h^{i}(\cdot)$
and $\mathbf{y}^{i}(\cdot)$ for $i\in I$, and the estimate in (\ref{second_estimate}).
\end{proof}

Let us return to the proof of Theorem \ref{theorem_large_time_behavior}. Using the first estimate (i) in Lemma \ref{lemma_estimate}, by a
standard diagonal procedure, we may construct a sequence $\{T_k\}$
such that
$$\lim_{T_k\rightarrow\infty}(\mathbf{y}^i(T_k,v)-\mathbf{y}^i(v)-\lambda
T_k)=L(v)$$
 for some limit function $L(v)$. Moreover, the second
estimate (ii) in Lemma \ref{lemma_estimate} implies that the limit
function $L(v)$ can be extended to a Lipschitz continuous function,
and the third estimate (iii) in Lemma \ref{lemma_estimate} further
implies that the limit actually satisfies $L(v)=L$ with $L$ being a
constant. This establishes the limit
(\ref{first_asymptotic}).

To show the convergence rate (\ref{second_asymptotic}), we deduce from
(\ref{first_asymptotic}) and (\ref{change_of_measure})
that, for $T'>T$,
\begin{align*}
|\delta\mathcal{Y}_0^{v}(T)-L|&=
\lim_{T'\rightarrow\infty}|\delta\mathcal{Y}_0^{v}(T)-\delta\mathcal{Y}_0^{v}(T')|\\
&=\lim_{T'\rightarrow\infty}\left|\delta\mathcal{Y}_0^{v}(T)-\mathbb{E}^{\mathbb{Q}}\left[h^{\alpha^{m(T')}_{T'}}(V_{T'}^v)-\mathbf{y}^{\alpha^{m(T')}_{T'}}(V_{T'}^{v})\right]\right|,
\end{align*}
where $m(T'):=2i-\alpha_{T'-T}^i$. Here we use $\alpha^i$ to
emphasize the initial data of the Markov chain $\alpha_0=i$. It then
follows from the tower property of conditional expectations that,
\begin{align*}
\mathbb{E}^{\mathbb{Q}}\left[h^{\alpha^{m(T')}_{T'}}(V_{T'}^v)-\mathbf{y}^{\alpha^{m(T')}_{T'}}(V_{T'}^{v})\right]=&\
\mathbb{E}^{\mathbb{Q}}\left[\mathbb{E}^{\mathbb{Q}}\left[h^{\alpha^{m(T')}_{T'}}(V_{T'}^v)-\mathbf{y}^{\alpha_{T'}^{m(T')}}(V_{T'}^{v})|\mathcal{G}_{T'-T}\right]\right]\\
=&\
\mathbb{E}^{\mathbb{Q}}\left[\mathbf{y}^{\alpha^{m(T')}_{T'-T}}(T,V_{T'-T}^v)-\mathbf{y}^{\alpha^{m(T')}_{T'-T}}(V_{T'-T}^v)-\lambda
T\right]\\
=&\
\mathbb{E}^{\mathbb{Q}}\left[\mathbf{y}^{i}(T,V_{T'-T}^v)-\mathbf{y}^{i}(V_{T'-T}^v)-\lambda
T\right]
\end{align*}
where we also used the relationship
$\alpha_{T'-T}^{m(T')}=\alpha_{T'-T}^{i-(\alpha_{T'-T}^i-i)}=i$ in
the last equality. In turn, using the definition  $\delta
\mathcal{Y}_0^{v}(T)=\mathbf{y}^{i}(T,v)-\mathbf{y}^{i}(v)-\lambda
T$, we obtain
\begin{align*}
|\delta\mathcal{Y}_0^{v}(T)-L|=&\lim_{T'\rightarrow\infty}\left|\delta\mathcal{Y}_0^{v}(T)-\mathbb{E}^{\mathbb{Q}}\left[h^{\alpha^{m(T')}_{T'}}(V_{T'}^v)-\mathbf{y}^{\alpha^{m(T')}_{T'}}(V_{T'}^{v})\right]\right|\\
=\
&\lim_{T'\rightarrow\infty}\mathbb{E}^{\mathbb{Q}}\left[\mathbf{y}^{i}(T,v)-\mathbf{y}^{i}(v)-(\mathbf{y}^{i}(T,V_{T'-T}^v)-\mathbf{y}^{i}(V_{T'-T}^v))\right],\\
\leq&
\lim_{T'\rightarrow\infty}C\left(1+|v|^2+\mathbb{E}^{\mathbb{Q}}\left[|V_{T'-T}^v|^2\right]\right)e^{-K_vT},
\end{align*}
where the assertion (iii) in Lemma \ref{lemma_estimate} 	is used in
the last inequality. The convergence rate then follows from the
moment estimate (\ref{first_estimate}).  The proof of Theorem \ref{theorem_large_time_behavior} is complete.
\end{proof}


%
%
%

\section{Conclusions}

In this paper, we introduced and solved a new type of quadratic BSDE
systems in an infinite time horizon and, subsequently, derived their
asymptotic limit as ergodic BSDE systems. The ergodic BSDE system is
used to characterize Markovian regime switching forward performance processes
and their associated optimal portfolio strategies. We have also
shown a connection between Markovian regime switching forward performance
processes and their classical expected utility counterparts via the
constant $\lambda$ in the corresponding ergodic BSDE system.
Finally, we use the ergodic BSDE system to study the large time
behavior for a class of PDE systems with quadratic growth
Hamiltonians.

\appendix
\section{Proof of Lemma \ref{comparsion_lemma}}\label{Appendix_A}
The idea of the proof is adapted from the arguments used in
\cite{HuPeng}. For $t\in[0,T]$, let
\begin{align*}
\delta Y_t^i:=Y_t^i-\bar{Y}_t^i,\ \ \delta Z_t^i:=Z_t^i-\bar{Z}_t^i\
\ \text{and}\ \ \delta \xi^i:=\xi^i-\bar{\xi}^i.
\end{align*}
Applying It\^o's formula to $(\delta Y_t^{i+})^2$ yields
\begin{align*}
(\delta Y_t^{i+})^2=&\ (\delta\xi^{i+})^2+\int_t^T2\delta
Y_s^{i+}[F_s^{i}(Z_s^i)-\bar{F}_s^{i}(\bar{Z}_s^i)]ds\\
&+\int_t^T2\delta
Y_s^{i+}[G_s^{i}(Y_s^i,Y_s^{-i})-\bar{G}_s^{i}(\bar{Y}_s^i,\bar{Y}_s^{-i})]ds\\
&-\int_t^T\chi_{\{\delta Y_s^i>0\}}|\delta
Z_s^i|^2ds-\int_t^T2\delta Y_s^{i+}(\delta Z_s^i)^{tr}dW_s.
\end{align*}
Using (\ref{Lip_F}) and (\ref{compare_F}), we obtain
\begin{align*}
F_s^{i}(Z_s^i)-\bar{F}_s^{i}(\bar{Z}_s^i)&=F_s^{i}(Z_s^i)-{F}_s^{i}(\bar{Z}_s^i)
+F_s^{i}(\bar{Z}_s^i)-\bar{F}_s^{i}(\bar{Z}_s^i)\leq C_f|\delta
Z_s^i|.
\end{align*}
Using (\ref{Lip_G}) and (\ref{compare_G}), together with the
monotone condition of $G^i_s$, we further obtain
\begin{align*}
&G_s^{i}(Y_s^i,Y_s^{-i})-\bar{G}_s^{i}(\bar{Y}_s^i,\bar{Y}_s^{-i})\\
=&\
G_s^{i}(Y_s^i,Y_s^{-i})-{G}_s^{i}(\bar{Y}_s^i,\bar{Y}_s^{-i})+G_s^{i}(\bar{Y}_s^i,\bar{Y}_s^{-i})-\bar{G}_s^{i}(\bar{Y}_s^i,\bar{Y}_s^{-i})\\
\leq&\ C_g\left(|\delta Y_s^i|+\sum_{k\neq i}\delta Y_s^{k+}\right).
\end{align*}
In turn, since $\delta\xi^{i+}=0$, we have
\begin{align*}
&\mathbb{E}[(\delta Y_t^{i+})^2]\\
\leq&\ \mathbb{E}\left[\int_t^T\left(2C_f\delta Y_s^{i+}|\delta
Z_s^i|+2C_g\delta Y_s^{i+}(|\delta Y_s^i|+\sum_{k\neq i}\delta
Y_s^{k+})-\chi_{\{\delta Y_s^i>0\}}|\delta Z_s^i|^2\right)ds\right]\\
\leq&\ \mathbb{E}\left[\int_t^T\chi_{\{\delta
Y_s^i>0\}}\left(-|\delta Z_s^i|^2+2C_f\delta Y_s^i|\delta
Z_s^i|-C_f^2(\delta
Y_s^i)^2\right)ds\right]\\
&\ +\mathbb{E}\left[\int_t^T\left((2C_g+C_f^2)(\delta
Y_s^{i+})^2+C_g^2(\delta Y_s^{i+})^2+\sum_{k\neq i}(\delta
Y_s^{k+})^2\right)ds\right].
\end{align*}
Thus, there exists a constant $C$ such that
$$\sum_{i\in I}\mathbb{E}[(\delta Y_t^{i+})^2]\leq C\int_t^T\sum_{i\in I}
\mathbb{E}[(\delta Y_s^{i+})^2]ds.$$ It then follows from Gronwall's
inequality that $\mathbb{E}[(\delta Y_t^i)^2]=0$, for $t\in[0,T]$
and $i\in I$, so $Y_t^{i}\leq \bar{Y}_t^i$ and we conclude.

\section{Proof of Theorem
\ref{theorem_ergodic_BSDE_system}}\label{Appendix_B} Let $\alpha$ be
the Markov chain introduced in section
\ref{section:regime_switching_forward} satisfying Assumptions
\ref{assumption_12} and \ref{assumption_15}. Let
$\left((\mathcal{Y}^{i,v},\mathcal{Z}^{i,v})_{i\in
I},\lambda\right)$ and
$\left((\bar{\mathcal{Y}}^{i,v},\bar{\mathcal{Z}}^{i,v})_{i\in
I},\bar{\lambda}\right)$ be two Markovian solutions to the ergodic BSDE system
(\ref{ergodic_BSDE_system}) both satisfying (\ref{estmate_y_1}),
(\ref{estimate_z_1}) and (\ref{estimate_y_2}).

For $t\geq 0$ and $v\in\mathbb{R}^d$, define
$$\mathcal{Y}_t^{v}:=\mathcal{Y}_t^{\alpha_t,v}=\mathbf{y}^{\alpha_t}(V_t^v),$$
$$\mathcal{Z}_t^{v}:=\mathcal{Z}_t^{\alpha_{t-},v}=\mathbf{z}^{\alpha_{t-}}(V_t^v),$$
and for $k',k\in I$,
$$\mathcal{U}_t^{v}(k',k):=\mathcal{Y}_t^{k,v}-\mathcal{Y}_t^{k',v}=(\mathbf{y}^{k}-\mathbf{y}^{k'})(V_t^v).$$
We may also define
$(\bar{\mathcal{Y}}^v,\bar{\mathcal{Z}}^v,(\bar{\mathcal{U}}^v(k',k))_{k',k\in
I})$ in an analogous way. Furthermore, let $\delta
\mathcal{Y}_t^{v}:={\mathcal{Y}}^v_t-\bar{\mathcal{Y}}^v_t$, $\delta
\mathcal{Z}_t^{v}:={\mathcal{Z}}^v_t-\bar{\mathcal{Z}}^v_t$, $\delta
\mathcal{U}_t^{v}(k',k):={\mathcal{U}}^v_t(k',k)-\bar{\mathcal{U}}^v_t(k',k)$
and $\delta\lambda:=\lambda-\bar{\lambda}$.

First, using Lemma \ref{lemma_Ito}, we deduce that
$(\mathcal{Y}^v,\mathcal{Z}^{v},(\mathcal{U}^{v}(k',k))_{k',k\in
I},\lambda)$ satisfies the scalar-valued ergodic BSDE driven by the
Brownian motion $W$ and the Markov chain $\alpha$, i.e. for $t\geq
0$,
\begin{align}\label{ergodic_BSDE_jumps}
d\mathcal{Y}_t^{v}=&\
-f^{\alpha_{t-}}(V_t^{v},\mathcal{Z}_t^{v})dt-\sum_{k\in
I}q^{\alpha_{t-}k}\left[e^{\mathcal{U}_t^{v}(\alpha_{t-},k)}-1-\mathcal{U}_t^{v}(\alpha_{t-},k)\right]dt+\lambda
dt\notag\\
&+(\mathcal{Z}_t^{v})^{tr}dW_t+\sum_{k,k'\in
I}\mathcal{U}_t^{v}(k',k)\chi_{\{\alpha_{t-}=k'\}}d\tilde{N}_t^{k'k}.
\end{align}
In turn,
$(\delta\mathcal{Y}^{v},\delta\mathcal{Z}^v,(\delta\mathcal{U}^{v}(k',k))_{k',k\in
I},\delta\lambda)$ satisfies
\begin{align}\label{difference_ergodic_BSDE_jumps}
d(\delta\mathcal{Y}_t^{v})=&\
-\left[f^{\alpha_{t-}}(V_t^{v},\mathcal{Z}_t^{v})-f^{\alpha_{t-}}(V_t^{v},\bar{\mathcal{Z}}_t^{v})\right]dt+(\delta\mathcal{Z}_t^{v})^{tr}dW_t\notag\\
&-\sum_{k\in
I}q^{\alpha_{t-}k}\left[g(\mathcal{U}_t^{v}(\alpha_{t-},k))-g(\bar{\mathcal{U}}_t^{v}(\alpha_{t-},k))\right]dt\notag\\
&+\sum_{k,k'}\delta\mathcal{U}_t^{v}(k',k)\chi_{\{\alpha_{t-}=k'\}}d\tilde{N}_t^{k'k}+\delta\lambda
dt,
\end{align}
where
\begin{equation}\label{def_g}
g(x):=e^{x}-1-x,\ \text{with}\ |x|\leq
\frac{1}{q^{\min}}\left(K_f+\frac{C_vC_{\eta}C_z}{(C_{\eta}-C_v)^2}\right).
\end{equation}

Next, we introduce
$$\delta f^{\alpha_{t-}}(V_t^v):=\frac{f^{\alpha_{t-}}(V_t^{v},\mathcal{Z}_t^{v})-f^{\alpha_{t-}}(V_t^{v},\bar{\mathcal{Z}}_t^{v})}{|\delta\mathcal{Z}_t^{v}|^2}\delta\mathcal{Z}_t^{v}\chi_{\{\delta\mathcal{Z}_t^{v}\neq 0\}},$$
and, for $k\in I$,
$$\delta g^{\alpha_{t-}k}(V_t^v):=\frac{g(\mathcal{U}_t^{v}(\alpha_{t-},k))-g(\bar{\mathcal{U}}_t^{v}(\alpha_{t-},k))}
{\delta\mathcal{U}_t^{v}(\alpha_{t-},k)}\chi_{\{\delta\mathcal{U}_t^{v}(\alpha_{t-},k)\neq
0\}}.$$ Note that Assumption \ref{assumption_11}(ii) and
(\ref{estimate_z_1}) imply that $\delta f^{\alpha_{t-}}(V_t^{v})$,
$t\geq 0$, is uniformly bounded. Moreover, the mean value theorem
(applied to the function $g(\cdot)$) and (\ref{estimate_y_2}) imply
that $\delta g^{\alpha_{t-}k}(V_t^v)$, $t\geq 0$, is also uniformly
bounded. Thus, for any $T>0$, define an equivalent probability
measure $\mathbb{Q}$ as
\begin{equation*}
\frac{d\mathbb{Q}}{d{\mathbb{P}}}:=\mathcal{E}_T\left(
\int_{0}^{\cdot }(\delta f^{\alpha_{s-}}(V_s^{v}))^{tr}dW_{s}\right)
\mathcal{E}_T\left(\int_0^{\cdot}\sum_{k,k^{\prime}\in I}\delta
g^{k'k}(V_s^{v})\chi_{\{\alpha_{s-}=k^{\prime}\}}d\tilde{N}_s^{k^{\prime}k}\right),
\end{equation*}%
so that under $\mathbb{Q}$, we have
\begin{equation}\label{equality_lambda}
\delta\lambda=\frac{\mathbb{E}^{\mathbb{Q}}
\left[\delta\mathcal{Y}_T^{v}-\delta\mathcal{Y}_0^{v}\right]}{T}
=\frac{\mathbb{E}^{\mathbb{Q}}\left[\mathbf{y}^{\alpha_T}(V_T^v)-\bar{\mathbf{y}}^{\alpha_T}(V_T^v)\right]-\left[\mathbf{y}^i(v)-\bar{\mathbf{y}}^i(v)\right]}{T}.
\end{equation}
Since both $\mathbf{y}^{k}(\cdot)$ and
$\bar{\mathbf{y}}^{k}(\cdot)$, $k\in I$, have at most linear growth
(cf. (\ref{estmate_y_1})), it follows from (\ref{first_estimate})
that $\delta\lambda=0$ by sending $T\rightarrow\infty$ in
(\ref{equality_lambda}).


We are left to show that
$\mathbf{y}^i(\cdot)=\bar{\mathbf{y}}^i(\cdot)$ and
$\mathbf{z}^i(\cdot)=\bar{\mathbf{z}}^i(\cdot)$ for $i\in I$. To
this end, it suffices to show that
\begin{equation}\label{uniqueness_Y}
\delta\mathcal{Y}^{v}_0={\mathcal{Y}}^v_0-\bar{\mathcal{Y}}^v_0=(\mathbf{y}^{i}-\bar{\mathbf{y}}^{i})(v)=0.
\end{equation}
The rest of the proof then follows from Theorem 3.11 in \cite{HU1}.
To prove (\ref{uniqueness_Y}), we have, from
(\ref{difference_ergodic_BSDE_jumps}), that
$$
\delta\mathcal{Y}_0^{v}=\mathbb{E}^{\mathbb{Q}}[\delta\mathcal{Y}_T^v]=
\mathbb{E}^{\mathbb{Q}}[\mathbf{y}^{\alpha_T}(V_T^{v})-\bar{\mathbf{y}}^{\alpha_T}(V_T^{v})].$$
Using (\ref{second_estimate}) and the fact that
$\mathbf{y}^i(0)=\bar{\mathbf{y}}^i(0)=0$, we obtain
$$\mathbb{E}^{\mathbb{Q}}[\mathbf{y}^{\alpha_T}(V_T^{v})-\bar{\mathbf{y}}^{\alpha_T}(V_T^{v})]
\leq C(1+|v|^2)e^{-K_vT}.$$ Hence, (\ref{uniqueness_Y}) follows by
sending $T\rightarrow \infty$ in the above inequality.

To conclude the paper, we recall the following moment estimate and
coupling estimate, which can be proved in a similar way to
\cite{HU1} (Proposition 2.3 and Theorem 2.4 for the Brownian motion
case), \cite{Cohen1} (section 3 for the Markov chain case) and
\cite{Cohen2} (section 3.2 for the L\'evy process case).

\begin{proposition}\label{coupling_lemma}
Let $T>0$ be fixed. Let $H^i:\mathbb{R}^d\rightarrow\mathbb{R}^d$
and $G^{ik}: \mathbb{R}^d\rightarrow\mathbb{R}$, $i,k\in I$, be
measurable bounded functions. Under Assumption \ref{assumption_13},
suppose that the processes $(V^v,\alpha)$ follow
$$dV_t^v=[\eta(V_t^v)+H^{\alpha_{t-}}(V_t^v)]dt+\kappa dW_t^{\mathbb{Q}},$$
and $$d\alpha_t=\sum_{k\in
I}q^{\alpha_{t-}k}(k-\alpha_{t-})(1+G^{\alpha_{t-}k}(V_t^v))dt+\sum_{k,k'\in
I}(k-k')\chi_{\{\alpha_{t-}=k'\}}d\tilde{N}_t^{\mathbb{Q},k'k},$$
where $\mathbb{Q}$ is an equivalent probability measure defined as
\begin{equation*}
\frac{d\mathbb{Q}}{d{\mathbb{P}}}:=\mathcal{E}_T\left(
\int_{0}^{\cdot }(H^{\alpha_{s-}}(V_s^{v}))^{tr}dW_{s}\right)
\mathcal{E}_T\left(\int_0^{\cdot}\sum_{k,k^{\prime}\in
I}G^{k'k}(V_s^{v})\chi_{\{\alpha_{s-}=k^{\prime}\}}d\tilde{N}_s^{k^{\prime}k}\right),
\end{equation*}%
with $W^{\mathbb{Q}}:=W-\int_0^{\cdot}H^{\alpha_{t-}}(V_t^{v})dt$
and $\tilde{N}^{\mathbb{Q},k'k}:=\tilde{N}^{k'k}-\int_0^{\cdot}
q^{k'k}G^{k'k}(V_t^v)dt$, $k',k\in I$, being the corresponding
Brownian motion and compensated Poisson martingales under
$\mathbb{Q}$, respectively. Then, there exists a constant $C>0$ such
that for any measurable functions $\phi^{i}:
\mathbb{R}^d\rightarrow\mathbb{R}$ $(i\in I)$ with a polynomial growth
rate $\mu>0$,
\begin{equation}\label{first_estimate}
\mathbb{E}^{\mathbb{Q}}[\phi^{\alpha_T}(V_T^v)]\leq C(1+|v|^{\mu}), \quad v\in\mathbb{R}^d.
\end{equation}
Furthermore, there exists a constant $K_{v}>0$ such that for $v_1,v_2\in \mathbb{R}^d,$
\begin{equation}\label{second_estimate}
\mathbb{E}^{\mathbb{Q}}[\phi^{\alpha_T}(V_T^{v_1})-\phi^{\alpha_T}(V_T^{v_2})]\leq
C(1+|v_1|^{1+\mu}+|v_2|^{1+\mu})e^{-K_{v}T}.
\end{equation}
The constants $C$ and $K_{v}$ depend on the functions $H^{i}(\cdot)$
and $G^{ik}(\cdot), i,k\in I,$ through their supremum norms.
\end{proposition}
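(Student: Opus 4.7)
The plan is to establish both estimates via It\^o's formula combined with the dissipative condition on $\eta$ and the boundedness of $H,G$, following the strategies of \cite{HU1} for the Brownian case and \cite{Cohen1,Cohen2} for jump-driven processes, adapted to our mixed diffusion-Markov-chain setting.

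For the moment estimate (\ref{first_estimate}), I would apply It\^o's formula to $|V_t^v|^{2p}$ with $p$ large enough that $2p\geq\mu$. Under $\mathbb{Q}$, the Brownian and compensated Poisson martingale parts vanish in expectation after a standard localization, so only the drift and the quadratic-variation correction remain. Using the dissipative condition, which gives $v^{tr}\eta(v)\leq -C_\eta|v|^2+C(1+|v|)$, the boundedness $|H^i|\leq \|H\|_\infty$, and Young's inequality to absorb all sub-leading powers of $|V_t^v|$ into a fraction of $|V_t^v|^{2p}$, I would derive the differential inequality
$$\frac{d}{dt}\mathbb{E}^{\mathbb{Q}}[|V_t^v|^{2p}]\leq -\gamma\,\mathbb{E}^{\mathbb{Q}}[|V_t^v|^{2p}]+C$$
for some $\gamma>0$ depending only on $C_\eta$, $|\kappa|$, $\|H\|_\infty$ and $p$. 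Gronwall's inequality then gives $\sup_{t\geq 0}\mathbb{E}^{\mathbb{Q}}[|V_t^v|^{2p}]\leq C(1+|v|^{2p})$, which combined with $|\phi^i(v)|\leq C(1+|v|^\mu)$ yields (\ref{first_estimate}).

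For the coupling estimate (\ref{second_estimate}), I would start with a synchronous coupling of $(V^{v_1},\alpha)$ and $(V^{v_2},\alpha)$ driven by the same $W^{\mathbb{Q}}$ and the same chain $\alpha$. The difference $\Delta V_t:=V_t^{v_1}-V_t^{v_2}$ satisfies the pathwise bound
$$\frac{d}{dt}|\Delta V_t|^2\leq -2C_\eta|\Delta V_t|^2+4\|H\|_\infty|\Delta V_t|$$
by dissipativity of $\eta$ and boundedness of $H$. The main obstacle is that this alone does not yield pathwise decay to zero: $|\Delta V_t|$ only contracts to an $O(\|H\|_\infty/C_\eta)$ equilibrium, so a pure synchronous coupling is insufficient. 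The resolution is to combine the $V$-contraction with the uniform ergodicity of the irreducible chain $\alpha$ (Assumption~\ref{assumption_15}) to obtain exponential convergence of the joint law $\mathcal{L}^{\mathbb{Q}}(V_T^v,\alpha_T)$ to its unique invariant measure in a suitable weighted Wasserstein sense, at a rate governed by the interplay of $C_\eta$, $q^{\min}$ and the sup-norms of $H$ and $G$. To treat the polynomial test function $\phi^i$, I would truncate $\phi^{\alpha_T}$ to a ball of radius $R$: on $\{|V_T|\leq R\}$ the truncated functional is bounded Lipschitz, so its difference between $v_1$-launched and $v_2$-launched processes decays like $R^\mu e^{-K_v T}$ by the ergodic estimate; on $\{|V_T|>R\}$ the tail is controlled via the moment estimate (\ref{first_estimate}) applied to $|V_T^{v_i}|^{\mu}$. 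Optimizing $R$ in $T$ produces the exponential decay together with the polynomial prefactor $C(1+|v_1|^{1+\mu}+|v_2|^{1+\mu})$. The hardest technical step will be the rigorous construction of the coupling and the quantification of its meeting-time tails in the mixed diffusion-jump setting; this is where the arguments of \cite{HU1,Cohen1,Cohen2} need to be patched together.
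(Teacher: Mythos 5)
The paper itself does not prove this proposition: it is stated as a recollection, with the proof deferred wholesale to \cite{HU1} (Proposition 2.3 and Theorem 2.4 for the Brownian part), \cite{Cohen1} (for the Markov chain part) and \cite{Cohen2}. So your sketch must be measured against the arguments of those references. Your treatment of the moment estimate (\ref{first_estimate}) is correct and is exactly the standard argument: It\^o's formula applied to $|V_t^v|^{2p}$ with $2p\geq\mu$, the dissipativity of $\eta$ together with the boundedness of $H^i$ and Young's inequality to obtain a differential inequality with strictly negative leading coefficient, Gronwall, and then the polynomial bound on $\phi^i$.

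The coupling estimate (\ref{second_estimate}) is where the genuine gap lies. You correctly identify that a synchronous coupling of the $V$-components cannot suffice, because $H^i$ is only bounded measurable (not Lipschitz), so $|\Delta V_t|$ contracts only to a ball of radius $O(\|H\|_\infty/C_\eta)$. But your proposed remedy --- invoking the uniform ergodicity of the chain $\alpha$ --- does not address this obstruction: the residual discrepancy in the $V$-components is not removed by mixing of the regime process. The mechanism that actually closes the argument in \cite{HU1} is the non-degeneracy of the noise ($\kappa$ positive definite in Assumption \ref{assumption_13}), which gives a strong Feller/irreducibility property for the $V$-dynamics and permits a Girsanov-based, total-variation coupling: on each unit time interval the two copies are forced to coalesce with probability bounded below uniformly over starting points in the contracted ball, and the geometrically distributed number of attempts produces the rate $e^{-K_vT}$. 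Moreover, you cannot drive both copies by ``the same chain $\alpha$'': the two expectations in (\ref{second_estimate}) are taken under two different measures $\mathbb{Q}^{v_1}$ and $\mathbb{Q}^{v_2}$, and under $\mathbb{Q}^{v}$ the jump intensities of $\alpha$ are $q^{k'k}(1+G^{k'k}(V_t^v))$, which depend on the $V$-trajectory; hence the chains themselves must be coupled, which is precisely where Assumption \ref{assumption_15} ($q^{\min}>0$) and the boundedness of $G^{ik}$ enter, following \cite{Cohen1}. Your truncation step for polynomially growing $\phi^i$ is the right final move, but since you explicitly defer the construction of the coupling and the quantification of its meeting-time tails, the core of (\ref{second_estimate}) remains unproved in your sketch.
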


\bigskip

{\bf Acknowledgment.} The authors thank the Editor, the Associate Editor and two referees for their helpful comments and suggestions. The authors also thank T. Zariphopoulou for stimulating discussions about forward performance processes,
	which motivate the current project.


\end{document}